\newlength\shlength
\newcommand\xshlongvec[2][0]{\setlength\shlength{#1pt}%
  \stackengine{-5pt}{$#2$}{\smash{$\kern\shlength%
    \stackengine{7.1pt}{$\mathchar"017E$}%
      {\rule{\widthof{$#2$}}{.57pt}\kern.4pt}{O}{r}{F}{F}{L}\kern-\shlength$}}%
      {O}{c}{F}{T}{S}}
\numberwithin{equation}{section}
\crefname{thm}{Theorem}{Theorems}
\crefname{cor}{Corollary}{Corollaries}
\crefname{lem}{Lemma}{Lemmas}
\crefname{sublem}{Sublemma}{Sublemmas}
\crefname{prop}{Proposition}{Propositions}
\crefname{dfn}{Definition}{Definitions}
\crefname{defi}{Definition}{Definitions}
\crefname{ex}{Example}{Examples}
\crefname{claim}{Claim}{Claims}
\crefname{conj}{Conjecture}{Conjectures}
\crefname{conv}{Notation}{Notations}
\crefname{rem}{Remark}{Remarks}
\crefname{rmk}{Remark}{Remarks}
\crefname{figure}{Figure}{Figures}
\crefname{section}{Section}{Sections}
\crefname{appendix}{Appendix}{Appendices}
\newtheorem{thm}{Theorem}[section]
\newtheorem{prop}[thm]{Proposition}
\newtheorem{cor}[thm]{Corollary}
\newtheorem{lem}[thm]{Lemma}
\theoremstyle{definition}
\newtheorem{dfn}[thm]{Definition}
\newtheorem{defi}[thm]{Definition}
\newtheorem{conj}[thm]{Conjecture}
\newtheorem{conv}[thm]{Notation}
\theoremstyle{remark}
\newtheorem{rmk}[thm]{Remark}
\newtheorem{rem}[thm]{Remark}
\newtheorem{prob}[thm]{Problem}
\newcommand*{\chom}{\mathcal{H}\kern -.5pt om}
\newcommand{\bZ}{\mathbb{Z}}
\newcommand{\bQ}{\mathbb{Q}}
\newcommand{\bR}{\mathbb{R}}
\newcommand{\bC}{\mathbb{C}}
\newcommand{\bS}{\mathbb{S}}
\newcommand{\bT}{\mathbb{T}}
\newcommand{\bP}{\mathbb{P}}
\newcommand{\cA}{\mathcal{A}}
\newcommand{\cC}{\mathcal{C}}
\newcommand{\cF}{\mathcal{F}}
\newcommand{\cH}{\mathcal{H}}
\newcommand{\cL}{\Omega}
\newcommand{\cQ}{\mathcal{Q}}
\newcommand{\cT}{\mathcal{T}}
\newcommand{\X}{\mathcal{X}}
\newcommand{\cX}{\mathcal{X}}
\newcommand{\Z}{\mathcal{Z}}
\newcommand{\sA}{\mathsf{A}}
\newcommand{\sB}{\mathsf{B}}
\newcommand{\sD}{\mathsf{D}}
\newcommand{\sF}{\mathsf{F}}
\newcommand{\sP}{\mathsf{P}}
\newcommand{\sQ}{\mathsf{Q}}
\newcommand{\sS}{\mathsf{S}}
\newcommand{\sT}{\mathsf{T}}
\newcommand{\sU}{\mathsf{U}}
\newcommand{\sV}{\mathsf{V}}
\newcommand{\fF}{\mathfrak{F}}
\newcommand{\fS}{\mathfrak{S}}
\newcommand{\rH}{\mathrm{H}}
\newcommand{\Hom}{\mathrm{Hom}}
\newcommand{\tri}{\triangle}
\newcommand{\sgn}{\mathrm{sgn}}
\newcommand{\trop}{\mathrm{trop}}
\newcommand{\stab}{\mathrm{stab}}
\newcommand{\spl}{\mathrm{split}}
\newcommand{\MF}{\mathcal{MF}}
\newcommand{\tr}{\mathsf{T}}
\newcommand{\bep}{\boldsymbol{\epsilon}}
\newcommand{\per}{\mathsf{per}}
\newcommand{\Dfd}{\sD_{\mathsf{fd}}}
\newcommand{\Stab}{\mathrm{Stab}}
\newcommand{\Aut}{\mathrm{Aut}}
\newcommand{\add}{\mathsf{add}}
\newcommand{\Teich}{Teichm\"uller}
\DeclareMathOperator{\interior}{\mathrm{int}}
\newcommand{\bs}{{\boldsymbol{s}}}
\newcommand{\indedge}{t \overbar{\indk} t'}
\newcommand{\edge}{t \overbar{k} t'}
\newcommand{\indi}{i}
\newcommand{\indj}{j}
\newcommand{\indk}{k}
\newcommand{\numi}{i}
\newcommand{\oset}[3][0ex]{%
  \mathrel{\mathop{#3}\limits^{
    \vbox to#1{\kern-2\ex@
    \hbox{$\scriptstyle#2$}\vss}}}}
\newcommand{\overbar}[1]{\oset{#1}{-\!\!\!-\!\!\!-}}
\newcommand{\osetnear}[3][0ex]{%
  \mathrel{\mathop{#3}\limits^{
    \vbox to#1{\kern-.3\ex@
    \hbox{$\scriptstyle#2$}\vss}}}}
\title{Categorical dynamical systems arising from sign-stable mutation loops}
\author[Shunsuke Kano]{Shunsuke Kano}
\address{Shunsuke Kano, Research Alliance Center for Mathematical Sciences, Tohoku University, 6-3 Aoba, Aramaki, Aoba-ku, Sendai, Miyagi 980-8578 Japan.}
\email{s.kano@tohoku.ac.jp}
\date{\today}
\begin{document}

\maketitle

\begin{abstract}
We give an autoequivalence of the derived category of the Ginzburg dg algebra for a mutation loop satisfying the sign stability introduced in \cite{IK19}.
We compute the categorical entropies of their restrictions to some subcategories and conclude that they are both given by the logarithm of the cluster stretch factor.
Moreover, we discuss the pseudo-Anosovness of them in the sense of \cite{FFHKL} and \cite{DHKK}.
\end{abstract}

\tableofcontents

\section{Introduction}

A \emph{categorical dynamical system} is a pair $(\sD, F)$ of a triangulated (or $A_\infty$-) category and an exact endofunctor $F$ of the category $\sD$.
The study of the categorical dynamical systems is initiated by Dimitrov--Haiden--Katzarkov--Kontsevich in \cite{DHKK}.
One of the backgrounds of them is the connection of the Teichm\"uller theory with the theory of the stability conditions of the triangulated categories \cite{GMN,BS}.
Namely, they bring Thurston's theory of dynamical systems on the surfaces into the study of the triangulated categories.
They define an invariant of the categorical dynamical systems, which is called \emph{categorical entropy}, as an analogy of the topological entropy.
Roughly speaking, it measures the complexity of the categorical dynamical systems.
Unlike the topological entropy, the categorical entropy has a real parameter, which represents how the endofunctor translates inside the category in the direction of the shift (see \cite{FF}).
The derivative of a categorical entropy at zero with respect to this parameter looks like a dynamical dimension for suitable endofunctors.
In fact, if the endofunctor is the Serre functor, the differential coefficient coincides with the Serre dimension.
This paper study the categorical dynamical systems arising from \emph{sign-stable mutation loops}.

\subsection{Sign-stable mutation loops}
The \emph{mutation} of a quiver is the fundamental deformation of it around a vertex, which is a key notion in the theory of \emph{cluster algebras}.
A \emph{mutation loop} is a sequence of mutations such that the initial and the final quivers are the same.
The mutation loops of a quiver $Q$ form a group which is called the \emph{cluster modular group} of $Q$.
The cluster modular group is a symmetry group of several objects related to cluster algebras.
For instance, if a quiver $Q$ is obtained by an ideal triangulation of a punctured surface $\Sigma$, then the cluster modular group of $Q$ is almost equivalent to the mapping class group of $\Sigma$.
Ishibashi and the author introduced the notion of \emph{sign} of the mutation loops, which is a map from the tropicalized cluster variety to the set of signs $\{+, 0, -\}$.
A mutation loop is said to be \emph{sign-stable} if the sign of it stabilizes via the iterated pullback by the mutation loop.

The sign stability is introduced as a cluster algebraic analog of the \emph{pseudo-Anosovness} of the mapping classes.
In fact, a mutation loop of a quiver associated with an ideal triangulation of a punctured surface is ``uniformly'' sign-stable if and only if the corresponding mapping class is pseudo-Anosov \cite[Theorem 1.2]{IK20a}.
Also, like as the pseudo-Anosov mapping classes, a sign-stable mutation loop has a stretch factor (we call it \emph{cluster stretch factor}) and the algebraic entropy of the induced cluster $\cA$- and $\cX$-transformations $\phi^a$ and $\phi^x$, which are the birational maps of the cluster $\cA$- and $\cX$-varieties, are given by the logarithm of it \cite[Theorem 1.1]{IK19}.

\subsection{Categorification of cluster algebras}
For a quiver with potential $(Q,W)$, namely a pair of a quiver $Q$ and a linear combination $W$ of oriented cycles in $Q$, Ginzburg defined the Calabi--Yau dg algebra $\Gamma_{Q,W}$ \cite{Gin}, which is called Ginzburg dg algebra in the sequel.
The derived category $\sD(\Gamma_{Q,W})$ of the dg algebra $\Gamma_{Q,W}$ has many information of the combinatorics of the cluster algebra of $Q$ \cite{Kel1,Kel2}.
This type of categorification of the cluster algebras called ``additive categorification" of cluster algebras.
Keller--Yang categorify a mutation of a quiver with potential as a generalization of a reflection functor \cite{KY}.
More precisely, they define the derived equivalence between the Ginzburg dg algebras of a quiver with potential and the mutated one.
However, the derived equivalences are uniquely defined up to ``signs''.
Thus, to lift a mutation loop to an autoequivalence of the derived category, we have to choose the signs.
Fortunately, their sign perfectly matches our sign of the mutation loops.

A sign-stable mutation loop has a unique sign, which is called \emph{stable sign}.
Using the stable sign, one can lift a sign-stable mutation loop $\phi$ to an autoequivalence $F_\phi$ of the derived category of the Ginzburg dg algebra.
Thus, one obtains a categorical dynamical system $(\sD(\Gamma_{Q,W}), F_\phi)$, which plays a central role in this paper.

\subsection{Categorical entropy of the categorical dynamical systems defined by the sign-stable mutation loops}
The categorical entropy of several categorical dynamical systems are computed besides in \cite{DHKK} (\emph{e.g.}, \cite{KT19,Kik17,KST20,Ou18,Ou20,Fan18,Yo20,Ike}).
Most of the previous researches are based on algebraic geometry while our objects belong to representation theory or Teichm\"uller theory.
More precisely, we will compute the categorical entropy of the restricted categorical dynamical systems $(\sD(\Gamma_{Q,W}, F_\phi))$ to finite-dimensional $(\Dfd(\Gamma_{Q,W}), F_\phi |_{\Dfd})$ or perfect $(\per(\Gamma_{Q,W}), F_\phi |_\per)$ derived categories.

\begin{thm}[{\cref{thm:cat_entropy_Dfd,thm:cat_entropy_per}}]
Let $\phi$ be a mutation loop with a representation path $\gamma: t \to t'$ which is sign-stable (\cref{d:sign stability}) on $\interior \cC^+_{(t)}$ and let us assume that \cref{p:spec_same} is true.
Then the following hold:
\begin{enumerate}
    \item $h_T(F_\phi|_{\Dfd}) = \log \lambda_\phi$
    for any $T \in \bR$.
    \item $h_0(F_\phi|_\per) = \log \lambda_\phi.$
\end{enumerate}
Here, $h_T(-)$ denotes the categorical entropy and $\lambda_\phi$ denotes the cluster stretch factor of $\phi$.
\end{thm}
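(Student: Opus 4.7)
The plan is to compute both categorical entropies by estimating the $T$-weighted complexity $\delta_T(G, F_\phi^n G)$ for an explicit split generator $G$ of each subcategory. The natural choices are $S := \bigoplus_{i \in Q_0} S_i$, the sum of simple modules, for $\Dfd(\Gamma_{Q,W})$, and $P := \bigoplus_{i \in Q_0} P_i \cong \Gamma_{Q,W}$ for $\per(\Gamma_{Q,W})$. With these generators,
\[
h_T(F_\phi|_{\Dfd}) = \limsup_{n\to\infty} \frac{1}{n} \log \delta_T(S, F_\phi^n S), \qquad h_0(F_\phi|_{\per}) = \limsup_{n\to\infty} \frac{1}{n} \log \delta_0(P, F_\phi^n P),
\]
so the problem reduces to two-sided asymptotic estimates of these complexities.

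The main structural input is the Keller--Yang recipe: each mutation functor at a vertex $k$ with sign $\epsilon_k$ acts on simples (resp.\ projectives) by an explicit distinguished triangle whose combinatorial shadow is precisely one step of the tropical $\cX$- (resp.\ $\cA$-)transformation. Consequently the classes $[F_\phi^n S_i] \in K_0(\Dfd)$ and $[F_\phi^n P_i] \in K_0(\per)$ are computed by iterating the corresponding tropical transformation. On $\interior \cC^+_{(t)}$ the sign of $\phi$ is eventually constant by sign stability, so iterating $F_\phi$ amounts to iterating a single linear map, whose dominant eigenvalue is the cluster stretch factor $\lambda_\phi$. The assumption \cref{p:spec_same} is what identifies this dominant eigenvalue across the $\cX$- and $\cA$-sides, so the same $\lambda_\phi$ appears in both statements.

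For the lower bound $h_T \geq \log\lambda_\phi$, I would combine the standard inequality bounding $\delta_T$ from below by the weighted $\Hom$-dimensions with the fact that the total $\Hom$-dimension dominates a $K_0$-level pairing; the iterated Keller--Yang presentation realises $F_\phi^n G$ as an actual iterated extension whose length equals the corresponding iterated tropical multiplicity, so the bound follows from the spectral-radius estimate supplied by sign stability. For the upper bound, the same Keller--Yang triangles yield a filtration of $F_\phi^n G$ by shifted copies of simples (resp.\ projectives) whose count is exactly the iterated tropical multiplicity, hence $\lambda_\phi^n$ up to constants. Crucially, under sign stability all shifts appearing in these triangles lie in a uniformly bounded range independent of $n$, so $\delta_T(S, F_\phi^n S) \leq C(T)\, \lambda_\phi^n$ for every $T$; this bounded-shift control is exactly what makes $h_T$ independent of $T$ in part (1). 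In the $\per$ case the $\Hom$ spaces are no longer finite-dimensional, so one restricts to $T = 0$ and works with a coarser complexity still controlled by the tropical $\cA$-growth.

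The hardest step I expect is the shift-boundedness in the upper bound. Sign stability controls the signs of the individual mutations along $\gamma^n$ but not a priori the shifts that accumulate through the iterated cones; one must show that in the Keller--Yang presentation of $F_\phi^n$ the $[1]$-shifts do not spread beyond a range bounded independently of $n$, presumably by tracking their cancellations under the stable sign and exploiting the $3$-Calabi--Yau structure of $\Dfd(\Gamma_{Q,W})$. Combined with \cref{p:spec_same}, this argument delivers the precise identification with $\log \lambda_\phi$ in both (1) and (2).
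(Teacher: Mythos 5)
Your proposal has the right philosophy --- compute $\delta_T(G, F_\phi^n G)$ for natural generators $S$ and $P$, track $K_0$-classes via iterated tropical transformations, and reduce the estimates to the spectral radius of the stable presentation matrix --- and you correctly pinpoint the assumption \cref{p:spec_same} as the bridge between the $\cX$- and $\cA$-side eigenvalues. However, you have also correctly identified the hardest step (shift-boundedness of $F_\phi^n S$) and then proposed a fix that does not work, whereas the paper sidesteps the issue entirely by a generator replacement.

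The problem with your plan as stated: $F_\phi$ is built from the \emph{stable} sign $\bep^\stab$ of $\gamma$, not the tropical sign $\bep^\trop_\gamma$, so $F_\phi^n$ corresponds to the path $\gamma^n$ with sign $(\bep^\stab)^n$. For small $n$ this is \emph{not} the tropical sign of $\gamma^n$, and the intermediate-heart statement of Nagao/Plamondon (\cref{lem:intermediate}) -- which is what actually controls the shift-spread -- does not apply. Each application of $F_\phi$ is a left tilt, so $F_\phi(\sA^{(t)}) \subset \sA^{(t)}[1] \star \sA^{(t)}$, but composing $n$ such tilts only gives $F_\phi^n(\sA^{(t)}) \subset \sA^{(t)}[n] \star \cdots \star \sA^{(t)}$; nothing forces the shifts to cancel, and the $3$-Calabi--Yau pairing does not supply such a cancellation. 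Your shift-boundedness claim is therefore not established, and I do not see how the route you sketch would establish it.

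The paper's actual argument replaces the generator before taking the limit. It introduces the auxiliary autoequivalence $F'_\phi$ corresponding to $(\gamma^s, \bep^\trop_{\gamma^s})$, where $s$ is the stabilization time, and works with the split generator $G' := F'_\phi(G)$ instead of $G$. The key identity is that $F_\phi^n F'_\phi$ corresponds to the path $\gamma^{s+n}$ with the \emph{tropical} sign $\bep^\trop_{\gamma^{s+n}}$, so \cref{lem:intermediate}~(1) applies directly: $F_\phi^n F'_\phi(\sA^{(t)})$ is a tilt with cohomology concentrated in exactly degrees $0$ and $-1$ for every $n$. In particular each $F_\phi^n(S'_i)$ is a simple lying in $\sA^{(t)}$ or $\sA^{(t)}[1]$, and since the total cohomology is concentrated in two degrees with no cancellation in $K_0$, the cohomological entropy formula of Ikeda (\cref{thm:cat_entropy_homology}) collapses the $T$-weighted dimension count to the $L^1$-norm of $[F_\phi^n F'_\phi|_{\Dfd}]$ up to a factor $e^{\pm T}$. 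That is what gives $T$-independence in part (1), with the lower bound supplied by Ikeda's spectral radius bound (\cref{thm:K_lower_bound}). The $\per$ case is analogous but uses \cref{lem:F_phi_inter}~(2), Plamondon's minimal presentation and the co-$t$-structure mass-growth bound (\cref{prop:co-t_mass_growth}) at $T=0$. So the one structural idea you are missing is the pre-stabilization of the generator via $F'_\phi$: it is a soft device, not a hard cancellation argument, and it is indispensable.
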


Combining the main result in \cite{IK19}, we obtain the following:
\begin{cor}
Assume the representation path $\gamma: t \to t'$ of the mutation loop $\phi$ is sign-stable on $\interior \cC^+_{(t)} \cup \interior \cC^-_{(t)}$.
Then the following holds:
\[
h_T(F_\phi|_{\Dfd}) = h_0(F_\phi|_\per) = h_\mathrm{alg}(\phi^a) =  h_\mathrm{alg}(\phi^x) = \log \lambda_\phi.
\]
Here, $h_\mathrm{alg}(-)$ denotes the algebraic entropy.
\end{cor}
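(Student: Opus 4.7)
The corollary is a direct concatenation of two preceding results, so the plan is simply to verify that both sets of hypotheses are in force and then string the equalities together.

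First, I would observe that the assumption of the corollary, sign-stability on $\interior \cC^+_{(t)} \cup \interior \cC^-_{(t)}$, is strictly stronger than the hypothesis of the preceding theorem, which only demands sign-stability on $\interior \cC^+_{(t)}$. Keeping the standing assumption that \cref{p:spec_same} holds (inherited from the theorem), both parts of the theorem then apply, giving
\[
h_T(F_\phi|_{\Dfd}) = \log \lambda_\phi \quad \text{for all } T \in \bR, \qquad h_0(F_\phi|_\per) = \log \lambda_\phi.
\]

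Next, I would invoke the main theorem of \cite{IK19} (stated in the introduction as the source of the cluster stretch factor), which asserts precisely that when a mutation loop is sign-stable on both $\interior \cC^+_{(t)}$ and $\interior \cC^-_{(t)}$, the algebraic entropies of the induced birational transformations $\phi^a$ and $\phi^x$ on the cluster $\cA$- and $\cX$-varieties both equal $\log \lambda_\phi$:
\[
h_\mathrm{alg}(\phi^a) = h_\mathrm{alg}(\phi^x) = \log \lambda_\phi.
\]
Concatenating these four equalities yields the chain claimed in the corollary.

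The only thing worth checking carefully is that the symbol $\lambda_\phi$ denotes the same quantity in both invocations, i.e.\ that the cluster stretch factor appearing in the categorical entropy theorem is literally the one defined in \cite{IK19}. Since this paper inherits the framework and definitions of \cite{IK19}, this is a matter of bookkeeping rather than a genuine obstacle; once it is noted, the proof reduces to a single line assembling the two cited results.
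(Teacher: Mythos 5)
Your plan is mostly on target, but you have imported a hypothesis that the corollary deliberately does \emph{not} make, and because of that your proof papers over the one nontrivial point.

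Read the corollary's hypotheses again: it assumes sign-stability on $\interior \cC^+_{(t)} \cup \interior \cC^-_{(t)}$, but it does \emph{not} assume \cref{p:spec_same}. You write ``Keeping the standing assumption that \cref{p:spec_same} holds (inherited from the theorem)'' --- that assumption is not present in the corollary and is not inherited. This matters: the body versions of the two theorems (\cref{thm:cat_entropy_Dfd} and \cref{thm:cat_entropy_per}) do not both output $\log\lambda_\phi$; they output $\log\check{\lambda}_\phi$ and $\log\lambda_\phi$ respectively, where $\lambda_\phi$ is the spectral radius of the stable presentation matrix $E^{(t)}_{\phi,\cC^+}$ and $\check\lambda_\phi$ is that of $\check E^{(t)}_{\phi,\cC^+} = ((E^{(t)}_{\phi,\cC^+})^\tr)^{-1}$. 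The introduction's theorem statement collapses these to a single value only under \cref{p:spec_same}. The whole point of the corollary is that the stronger geometric hypothesis --- sign-stability on \emph{both} $\interior\cC^+_{(t)}$ and $\interior\cC^-_{(t)}$ --- makes \cref{p:spec_same} unnecessary: under that hypothesis, the main theorem of \cite{IK19} applies and yields $h_\mathrm{alg}(\phi^a)=h_\mathrm{alg}(\phi^x)=\log\lambda_\phi$, where the $\cA$- and $\cX$-entropies are governed respectively by $\check E_\phi$ and $E_\phi$; the equality of the two algebraic entropies thus already forces $\check\lambda_\phi=\lambda_\phi$. Once that identification is in hand, the rest of your concatenation goes through as you describe. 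So the fix is to replace your appeal to \cref{p:spec_same} with the observation that \cite{IK19}'s theorem, applicable because of the $\cC^+\cup\cC^-$ hypothesis, supplies $\check\lambda_\phi=\lambda_\phi$ directly.
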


\subsection{Pseudo-Anosovness of the autoequivalences defined by the sign-stable mutation loops}
In the theory of the categorical dynamical systems, some kinds of the pseudo-Anosovness of autoequivalences are suggested \cite{DHKK,FFHKL}.
We discuss the pseudo-Anosovness of the functor $F_\phi$ of a sign-stable mutation loop $\phi$ in the sence of \cite{FFHKL} in general setting:

\begin{thm}[{\cref{thm:pA_auto}}]
If a mutation loop $\phi$ is uniformly sign-stable and
has a North dynamics on $\bS\cC^+_{(t)}$ (\cref{def:NS_dyn} (1)) for some $t \in \bT_I$ with an $\cX$-filling (\cref{def:X-fill}) attracting point $p^+_\phi$ whose stretch factor $\lambda_\phi^{(t)}$ is larger than $1$,
then $F_\phi^{(t)}$ is pseudo-Anosov with a stretch factor $\lambda_{\phi}^{(t)}$.
\end{thm}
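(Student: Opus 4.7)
The plan is to prove pseudo-Anosovness in the sense of \cite{FFHKL} by exhibiting a (generalized) stability condition $\sigma^+$ on $\sD(\Gamma_{Q,W})$ preserved by $F_\phi^{(t)}$ up to scaling by $\lambda_\phi^{(t)}$. The construction transports the attracting fixed point $p^+_\phi \in \bS\cC^+_{(t)}$ of the tropical $\cX$-action into a central charge $Z^+ \colon K_0(\sD) \to \bC$, and the proof reduces to checking two things: first, that $Z^+$ underlies a bona fide stability condition, and second, that $F_\phi^{(t)}$ rescales $Z^+$ by exactly $\lambda_\phi^{(t)}$.

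For the construction of $Z^+$ I would proceed as follows. The cluster chart at $t$ identifies the tropical $\cX$-space with $\bR^I$, so a representative of $p^+_\phi$ produces a real vector, which I would promote to a complex central charge by assigning phases according to the stable sign of $\phi$ at $p^+_\phi$: positive, zero, and negative sign components landing in different phase sectors compatible with the heart $\sA_t$ and its shifts. The sign-compatibility that already underlies the definition of $F_\phi^{(t)}$ through the Keller--Yang mutation functors then guarantees that the induced $K_0$-level action of $F_\phi^{(t)}$ coincides with the tropical $\cX$-action of $\phi$ at $p^+_\phi$, so the North dynamics with expansion rate $\lambda_\phi^{(t)}$ yields the desired identity $Z^+ \circ F_\phi^{(t)} = \lambda_\phi^{(t)} \cdot Z^+$. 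The $\cX$-filling hypothesis is then used to upgrade $Z^+$ to an honest stability condition: filling should translate into strict positivity of $\mathrm{Im}\, Z^+$ on every simple of a (possibly tilted) heart, and the HN/support property can be propagated across $\Dfd(\Gamma_{Q,W})$ along the $F_\phi^{(t)}$-orbit, with uniform sign stability guaranteeing that the tilting behavior along this orbit stays controlled.

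The hardest step will be turning the tropical/combinatorial data into a verified stability condition --- in particular, establishing the HN/support property. This is the categorical analog of the existence of an invariant quadratic differential for a pseudo-Anosov mapping class in the \Teich{} setting; the $\cX$-filling assumption plays the cluster-theoretic role of the filling condition on a stable lamination of a pseudo-Anosov, and making this translation precise is expected to carry most of the technical weight, since one must pass from the purely tropical positivity information around $p^+_\phi$ to analytic positivity of a complex central charge. A secondary subtlety is ensuring the identified stretch factor is exactly $\lambda_\phi^{(t)}$ rather than merely proportional to it, which should follow from the explicit construction of $F_\phi$ through the stable sign combined with the precise expansion rate of the tropical $\cX$-dynamics at $p^+_\phi$; the hypothesis $\lambda_\phi^{(t)} > 1$ then rules out the elliptic and parabolic regimes and secures genuine pseudo-Anosovness.
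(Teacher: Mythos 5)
You have correctly identified the overall shape of the argument --- exhibit a stability condition $\sigma$ that $F_\phi^{(t)}$ fixes up to a rescaling by $\lambda_\phi^{(t)}$ and then invoke an FFHKL-style criterion (the paper's \cref{lem:Fsigma=sigmalambda}) --- but the \emph{construction} of $\sigma$ is where your plan and the paper's diverge, and yours has a genuine gap at exactly the step you flag as hardest.

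You propose to build a central charge $Z^+$ directly out of the ray $p^+_\phi$ and then prove by hand that it underlies a stability condition. Note, however, that $p^+_\phi$ can only be expected to give the \emph{imaginary part} of a central charge: the paper's comparison map $\Pi:\Stab(\Dfd^{(t)})\to M^{(t)}\otimes\bR\cong\cX_{(t)}(\bR^\trop)$ is precisely $(\sA,Z)\mapsto\mathrm{Im}(Z)$, and it is far from injective. You must still supply a real part, identify a heart (which for the limiting stability condition is not any chart heart $\sA^{(t')}$ but an infinitely tilted one sitting at the ``boundary''), and verify the Harder--Narasimhan and support properties for it. You acknowledge this carries most of the technical weight, but you do not indicate how to carry it out, and the $\cX$-filling hypothesis alone is not enough to do so directly.

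The paper sidesteps this entirely by never constructing $\sigma$ by hand. Starting from $\sigma_0=(\sA^{(t)},Z_0)$ with $Z_0(S_i)=\sqrt{-1}$, it defines
\[
\sigma_n := F_\phi^n F'_\phi \cdot \sigma_0 \cdot \frac{\log\bigl\| \phi^{s+n}(\ell^+_{(t)}) \bigr\|}{\pi\sqrt{-1}},
\]
so that every $\sigma_n$ is automatically an honest element of $\Stab(\Dfd^{(t)})$, being the image of $\sigma_0$ under the $\Aut$- and $\bC$-actions. It then shows $(\sigma_n)$ is Cauchy in Bridgeland's metric: the $\cX$-filling of $p^+_\phi$ forces the signs $\sgn(x_i^{(t)}(\phi^{s+n}(\ell_{(t)}^+)))$ to stabilize, so the phase terms of $d(\sigma_m,\sigma_n)$ become eventually zero, and the North dynamics together with \cite[Theorem 5.4]{TS} makes the normalized orbit $\Pi(\sigma_n)=\phi^{s+n}(\ell^+_{(t)})/\|\phi^{s+n}(\ell^+_{(t)})\|$ Cauchy. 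Woolf's completeness theorem for $\Stab^\dagger(\Dfd^{(t)})$ then produces the limit $\sigma_\phi^{(t)}$ --- no HN or support verification is needed --- and passing the recursion $\sigma_{n+1}=F_\phi\cdot\sigma_n\cdot(\cdots)$ to the limit yields $F_\phi^{(t)}\cdot\sigma_\phi^{(t)}=\sigma_\phi^{(t)}\cdot\frac{\sqrt{-1}}{\pi}\log\lambda_\phi^{(t)}$ (\cref{prop:N_dyn_stab}). This completeness argument is the key device your proposal is missing: it replaces the hard analytic verification you would need with a soft fixed-point argument in a complete metric space.
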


Moreover, we discuss the relation with the pseudo-Anosovness in the sense of \cite{DHKK} in the case of the quiver obtained from an ideal triangulation of a punctured surface $\Sigma$.

\subsection*{Organization of the paper}
In Section 2, we review some notation of cluster algebras (following \cite{IK19}) and basic tools in triangulated categories, and we give a seed pattern via the categorification of cluster algebras.
In Section 3, we review the notion of sign stability of mutation loops and we give the autoequivalence $F_\phi$ associated with a sign stable mutation loop $\phi$, which plays a central role in this paper.
In Section 4, we compute the categorical entropy of the functor $F_\phi$ restricted to finite-dimensional derived category and perfect derived category.
In Section 5, we discuss the pseudo-Anosovness of $F_\phi$ in the sense of \cite{FFHKL}.
In Section 6, we compare the results in \cite{IK20a} and observe the issues of pseudo-Anosovness of $F_\phi$ in the sense of \cite{DHKK}.

\subsection*{Acknowledgements}
The author is deeply grateful to his supervisor Yuji Terashima.
Also he would like to thank to Tsukasa Ishibashi, Akishi Ikeda, Kohei Kikuta and Hidetoshi Masai for their valuable discussions and comments.

\section{Preliminaries}
In this section, we fix notation and review some notions.

\subsection{Mutation loops}
First, we give a brief review the notion of sign stability of mutation loops introduced in \cite{IK19}.

\subsubsection{Seed patterns}\label{subsec:seed_mut}
We fix a finite index set $I = \{ 1,2, \dots, N \}$ and a regular tree $\bT_I$ of valency $|I| = N$, whose edges are labeled by $I$ so that the set of edges incident to a fixed vertex has distinct labels.

To each vertex $t$ of $\bT_I$, we assign the following data:
\begin{itemize}
    \item A lattice $N^{(t)} = \bigoplus_{\indi \in I} \bZ e_\indi^{(t)}$ with a basis $(e_\indi^{(t)})_{\indi \in I}$.
    \item An integral skew-symmetric matrix $B^{(t)} = (b_{\indi \indj}^{(t)})_{\indi, \indj \in I}$, called the \emph{exchange matrix}.
\end{itemize}
We call such a pair $(N^{(t)}, B^{(t)})$ of data a \emph{seed}.
Let $M^{(t)}:=\Hom (N^{(t)},\bZ)$ be the dual lattice of $N^{(t)}$, and let $(f^{(t)}_\indi)_{\indi \in I}$ be the dual basis of $(e^{(t)}_\indi)_{\indi \in I}$.
We define a skew-symmetric bilinear form $\{-,-\}: N^{(t)} \times N^{(t)} \to \bZ$ by $\{e_\indi^{(t)},e_\indj^{(t)}\} := b_{\indi\indj}^{(t)}$.
It induces a linear map $p^*: N^{(t)} \to M^{(t)}$, $n \mapsto \{n,-\}$ called the \emph{ensemble map}.
Each triple $(N^{(t)}, \{-,-\}, (e_\indi^{(t)})_{\indi \in I})$ is called a seed in \cite{FG09}.

We say that the two seeds $(N, B)$ and $(N', B')$ are \emph{isomorphic} if there is a permutation $\sigma \in \fS_I$ such that $\sigma. B = B'$.
Here, $\sigma. A := (a_{\sigma^{-1}(i), \sigma^{-1}(j)})_{i,j \in I}$ for each matrix $A = (a_{ij})_{i,j \in I}$.

\begin{rem}\label{r:matrix convention}
For exchange matrices we use the notation $B$ rather than $\epsilon$, since we want to reserve the latter for signs $\epsilon \in \{+,0,-\}$. Our exchange matrix is related to the one $B^{\mathrm{FZ}}=(b^\mathrm{FZ}_{\indi\indj})_{\indi.\indj \in I}$ used in \cite{FZ-CA4,NZ12} by the transposition $b^{\mathrm{FZ}}_{\indi\indj} = b_{\indj\indi}$.
\end{rem}

We call such an assignment $\bs: t \mapsto (N^{(t)},B^{(t)})$ a \emph{seed pattern} if for each edge $\indedge$ of $\bT_I$ labeled by $\indk \in I$, the exchange matrices $B^{(t)}=(b_{\indi\indj})$ and $B^{(t')}=(b'_{\indi\indj})$ are related by the \emph{matrix mutation}:
\[ 
b'_{\indi\indj} = 
    \begin{cases}
    -b_{\indi\indj} & \mbox{if $\indi=\indk$ or $\indj=\indk$}, \\
    b_{\indi\indj} + [b_{\indi\indk}]_+ [b_{\indk\indj}]_+ - [-b_{\indi\indk}]_+ [-b_{\indk\indj}]_+ & \mbox{otherwise}.
    \end{cases} 
\]
Here $[a]_+:=\max\{a,0\}$ for $a \in \bR$, throughout this paper.
As a relation between the lattices assigned to $t$ and $t'$, we consider two linear isomorphisms $\widetilde{\mu}_{\indk,\epsilon}^*: N^{(t')} \xrightarrow{\sim} N^{(t)}$ which depend on a sign $\epsilon \in \{+,-\}$ and is given by
\[
e'_\indi \mapsto 
\begin{cases}
    -e_\indk & \mbox{if $\indi = \indk$},\\
    e_\indi + [\epsilon b_{\indi\indk}]_+ e_\indk & \mbox{if $\indi \neq \indk$}.
\end{cases}
\]
Here we write $e_\indi:=e_\indi^{(t)}$ and $e'_\indi:=e_\indi^{(t')}$.
It induces a linear isomorphism $\widetilde{\mu}_{\indk,\epsilon}^*: M^{(t')} \xrightarrow{\sim} M^{(t)}$ (denoted by the same symbol) which sends $f'_\indi$ to the dual basis of $\widetilde{\mu}_{\indk,\epsilon}^*(e'_\indi)$.
Explicitly, it is given by
\[
f'_\indi \mapsto
\begin{cases}
    -f_\indk + \sum_{\indj \in I} [-\epsilon b_{\indk\indj}]_+ f_\indj & \mbox{if $\indi = \indk$},\\
    f_\indi & \mbox{if $\indi \neq \indk$}.
\end{cases}
\]
We call each map $\widetilde{\mu}_{\indk,\epsilon}^*$ the \emph{signed seed mutation} at $\indk \in I$.

One can check the following lemma by direct calculation:
\begin{lem}
The signed mutations are compatible with matrix mutations. Namely, for any $k \in I$ and $\epsilon \in \{+,-\}$, we have
\[
\{\widetilde{\mu}_{\indk,\epsilon}^*(e'_\indi),\widetilde{\mu}_{\indk,\epsilon}^*(e'_\indj)\} = b'_{\indi\indj}.
\]
\end{lem}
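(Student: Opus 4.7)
The plan is to verify the claim by a direct case-by-case computation, splitting on whether each of the indices $i,j$ equals the mutation direction $k$. Since $\{-,-\}$ is bilinear and skew-symmetric, everything reduces to computing the four brackets built from the explicit formulas for $\widetilde{\mu}_{k,\epsilon}^*(e'_i)$ and $\widetilde{\mu}_{k,\epsilon}^*(e'_j)$, and matching them against the piecewise definition of the mutated exchange matrix $B^{(t')}$.

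First I would dispose of the degenerate cases. If $i=j$ both sides vanish by skew-symmetry, and if exactly one of $i,j$ equals $k$, say $i=k$ and $j\neq k$, then $\widetilde{\mu}_{k,\epsilon}^*(e'_k)=-e_k$ and $\widetilde{\mu}_{k,\epsilon}^*(e'_j)=e_j+[\epsilon b_{jk}]_+ e_k$, so bilinearity together with $\{e_k,e_k\}=0$ gives $-\{e_k,e_j\}=-b_{kj}$, which is precisely $b'_{kj}$ by the first branch of the mutation formula. The case $j=k$ is symmetric.

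The substantive case is $i\neq k$ and $j\neq k$. Expanding
\[
\{e_i+[\epsilon b_{ik}]_+ e_k,\ e_j+[\epsilon b_{jk}]_+ e_k\}
= b_{ij}+[\epsilon b_{jk}]_+ b_{ik}-[\epsilon b_{ik}]_+ b_{jk}
\]
(using $\{e_k,e_k\}=0$ and $b_{kj}=-b_{jk}$), the claim reduces to the numerical identity
\[
[\epsilon b_{jk}]_+ b_{ik}-[\epsilon b_{ik}]_+ b_{jk}=[b_{ik}]_+[b_{kj}]_+-[-b_{ik}]_+[-b_{kj}]_+.
\]
Setting $a:=b_{ik}$ and $c:=b_{jk}$ and rewriting the right-hand side as $[a]_+[-c]_+-[-a]_+[c]_+$, I would check the identity in the four sign regimes of $(a,c)$; in each of the eight total cases (four regimes $\times$ two signs $\epsilon\in\{+,-\}$) both sides evaluate to the same explicit value ($0$, $-ac$, $ac$, or $0$).

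I do not expect any real obstacle: the main step is just the sign case analysis above, which is essentially the standard verification that the Fock--Goncharov signed mutations are compatible with the Fomin--Zelevinsky matrix mutation. The only thing to be careful about is the transposition convention noted in \cref{r:matrix convention}, so that $\{e_i,e_j\}=b_{ij}$ is paired consistently with the mutation formula for $b'_{ij}$ on the nose.
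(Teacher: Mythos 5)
Your proposal is correct and is precisely the ``direct calculation'' the paper alludes to without writing out: the degenerate cases $i=j$ and $i=k$ (or $j=k$) are immediate from bilinearity and skew-symmetry, and the remaining case reduces to the scalar identity $[\epsilon c]_+a-[\epsilon a]_+c=[a]_+[-c]_+-[-a]_+[c]_+$, which you correctly verify in all sign regimes (and which is independent of $\epsilon$, as it must be). No gaps.
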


For later discussions, we collect here some properties of the presentation matrices of the signed seed mutation and its dual, with respect to the seed bases. 
For an edge $\indedge$ of $\bT_{I}$ and a sign $\epsilon \in \{+,-\}$, let us consider the matrices $\check{E}^{(t)}_{\indk,\epsilon} = (\check{E}_{\indi\indj})_{\indi,\indj \in I}$ and $E^{(t)}_{\indk,\epsilon} = (E_{\indi\indj})_{\indi,\indj \in I}$, given as follows:
\begin{align}
    \check{E}_{\indi\indj}:=
    \begin{cases}
        1 & \mbox{if $\indi=\indj \neq \indk$}, \\
        -1 & \mbox{if $\indi=\indj=\indk$}, \\
        [-\epsilon b_{\indk\indj}^{(t)}]_+ & \mbox{if $\indi=\indk$ and $\indj \neq \indk$}, \\
        0 & \mbox{otherwise},
    \end{cases}\label{eq:E_check_matrix}
\end{align}

\begin{align}
    E_{\indi\indj}:=
    \begin{cases}
        1 & \mbox{if $\indi=\indj \neq \indk$}, \\
        -1 & \mbox{if $\indi=\indj=\indk$}, \\
        [\epsilon b_{\indi\indk}^{(t)}]_+ & \mbox{if $\indj = \indk$ and $\indi \neq \indk$}, \\
        0 & \mbox{otherwise}.
    \end{cases}\label{eq:E_matrix}
\end{align}
Then the transpose of the matrix $E^{(t)}_{\indk,\epsilon}$ gives the presentation matrix of $\widetilde{\mu}_{\indk,\epsilon}^*: N^{(t')} \xrightarrow{\sim} N^{(t)}$ with repsect to the seed bases $(e_\indi^{(t')})$ and $(e_\indi^{(t)})$: $\widetilde{\mu}_{\indk,\epsilon}^*e^{(t')}_\indi= \sum_{\indj \in I}(E^{(t)}_{\indk,\epsilon})_{\indi\indj}e^{(t)}_\indj$.
Similarly the transpose of the matrix $\check{E}^{(t)}_{\indk,\epsilon}$ gives the presentation matrix of $\widetilde{\mu}_{\indk,\epsilon}^*: M^{(t')} \xrightarrow{\sim} M^{(t)}$ with respect to the bases $(f_\indi^{(t')})$ and $(f_\indi^{(t)})$.

The following are basic properties, which can be checked by a direct computation.
\begin{lem}\label{lem:EF_formulae}
For any edge $\edge$ and $\epsilon \in \{+,-\}$, we have the following equations:
\begin{enumerate}
    \item $(E^{(t)}_{\indk, \epsilon})^{-1} = E^{(t)}_{\indk, \epsilon}$, $(\check{E}^{(t)}_{\indk, \epsilon})^{-1} = \check{E}^{(t)}_{\indk, \epsilon}$.
    \item $(E^{(t)}_{\indk, \epsilon})^\tr = (\check{E}^{(t)}_{\indk, \epsilon})^{-1}$.
\end{enumerate}
\end{lem}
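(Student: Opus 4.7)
The plan is to verify both identities by direct matrix computation, case-splitting on whether the relevant indices equal $\indk$. The key structural observation is that $E^{(t)}_{\indk,\epsilon}$ differs from the identity matrix only in its $\indk$-th column (and the sign of the $(k,k)$-entry), while $\check{E}^{(t)}_{\indk,\epsilon}$ differs from the identity only in its $\indk$-th row (and the sign of the $(k,k)$-entry). This sparsity makes the entrywise calculation short.

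First I would prove (1). For $M = E^{(t)}_{\indk,\epsilon}$, I would compute $(M^2)_{\indi\indj} = \sum_\ell M_{\indi\ell} M_{\ell\indj}$ and dispose of the four cases $\indi,\indj \in \{\indk, \neq\indk\}$. Thanks to the column-sparsity, only $\ell = \indi$ and $\ell = \indk$ can contribute when $\indi \neq \indk$, giving $M_{\indi\indj} + [\epsilon b_{\indi\indk}]_+ M_{\indk\indj}$; since $M_{\indk\indj}$ vanishes for $\indj \neq \indk$ and equals $-1$ for $\indj = \indk$, the two non-diagonal contributions cancel. The case $\indi = \indk$ is trivial. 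The identity $(\check{E}^{(t)}_{\indk,\epsilon})^2 = \mathrm{Id}$ is analogous, using row-sparsity.

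Next, for (2), rather than inverting $\check{E}^{(t)}_{\indk,\epsilon}$ separately, I would combine it with (1) and simply verify $E^{(t)}_{\indk,\epsilon}{}^\tr \cdot \check{E}^{(t)}_{\indk,\epsilon} = \mathrm{Id}$. Again splitting into the four cases for $(\indi,\indj)$, the only non-trivial case is $\indi = \indk$, $\indj \neq \indk$, where the sum reduces to
\[
\sum_\ell E_{\ell\indk}\, \check{E}_{\ell\indj} = (-1)\cdot[-\epsilon b_{\indk\indj}]_+ + [\epsilon b_{\indj\indk}]_+ \cdot 1.
\]
This vanishes precisely because $B^{(t)}$ is skew-symmetric, so $[\epsilon b_{\indj\indk}]_+ = [-\epsilon b_{\indk\indj}]_+$. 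All remaining cases follow from the same sparsity-of-supports argument used in (1). Combined with part (1), this yields $(E^{(t)}_{\indk,\epsilon})^\tr = (\check{E}^{(t)}_{\indk,\epsilon})^{-1}$.

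There is no real obstacle here; the only place where anything beyond bookkeeping is used is the skew-symmetry of $B^{(t)}$ in the cross-term of (2). In fact, this is the unique ``conceptual'' input, and it reflects the fact that the dual $\widetilde{\mu}_{\indk,\epsilon}^* : M^{(t')} \to M^{(t)}$ is the transpose-inverse of $\widetilde{\mu}_{\indk,\epsilon}^* : N^{(t')} \to N^{(t)}$ under the pairing between $N^{(t)}$ and $M^{(t)}$; one could equivalently derive (2) from this naturality, but the direct entry computation seems cleaner.
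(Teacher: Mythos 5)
Your proof is correct and does exactly what the paper calls for: the paper dismisses this lemma as one ``which can be checked by a direct computation,'' and you carry out precisely that computation, exploiting the column/row sparsity of $E^{(t)}_{\indk,\epsilon}$ and $\check{E}^{(t)}_{\indk,\epsilon}$ and invoking the skew-symmetry of $B^{(t)}$ in the single nontrivial cross term of part~(2). No gaps.
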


\begin{conv}\label{conv:check}
In the sequel, we use the notation $\check{A}:=(A^\tr)^{-1}$ for an invertible matrix $A$.
\end{conv}

\subsubsection{Tropical cluster transformations and tropical cluster varieties}
Let $\mathbb{P}= (\bP, \oplus, \cdot) = \bZ^\trop$ or $\bR^\trop$.
Here $\bZ^\trop$ and $\bR^\trop$ are the semifields $(\bZ, \min, +)$ and $(\bR, \min, +)$ respectively, called \emph{min-plus algebra}.
For a seed pattern $\bs: \bT_I \ni t \mapsto (N^{(t)}, B^{(t)})$, we define the tropical seed $\cX$-tori
associated with $t \in \bT_I$ as
\begin{align*}
    \cX_{(t)}(\bP) := M^{(t)} \otimes_\bZ \bP^\times,
\end{align*}
where $\bP^\times:=(\bP,\cdot)$ denotes the multiplicative group, namely $(\bZ^\trop)^\times = \bZ$ and $(\bR^\trop)^\times = \bR$.
We call the \emph{tropical cluster $\cX$-coordinates}
the functions
\begin{align*}
    x_i^{(t)}&: \cX_{(t)}(\bP) \to \bP, \quad m \otimes p \mapsto \langle e_i^{(t)}, m \rangle p.
\end{align*}

The tropical cluster $\cX$-transformations $\mu_k^x: \cX^{(t)}(\bR^\trop) \to \cX^{(t')}(\bR^\trop)$ associated with an edge $\indedge$ of $\mathbb{T}_I$ are given by the composition $\widetilde{\mu}_{\indk,\epsilon} \circ \mu_{\indk,\epsilon}^\#$ with the piecewise linear (PL for short) automorphisms $\mu_{\indk,\epsilon}^\#$
given by
\[
(\mu_{\indk,\epsilon}^\#)^*x_\indi := x_\indi -b_{\indi\indk} \min\{0, \epsilon x_\indk\}.
\]
We note that these maps are PL isomorphisms.

\begin{dfn}
The \emph{tropical cluster $\cX$-variety} $\X_{\bs}(\bP)$
associated with a seed pattern $\bs: t \mapsto (N^{(t)},B^{(t)})$ is defined by gluing the corresponding tropical tori by tropical cluster transformations:
\[
\X_{\bs}(\bP) := \bigcup_{t \in \bT_I} \X_{(t)}(\bP).
\]
Since the tropical cluster transformations are bijective, each $\X_{(t)}(\bP)$ 
are isomorphic to $\X_{\bs}(\bP)$.
\end{dfn}

\subsubsection{Mutation loops}

A horizontal loop is defined to be an equivalence class of an edge path in $\bT_I$.

\begin{dfn}\label{def:cluster transf}
For an edge path $\gamma:t \to t'$ in $\bT_I$, we define the \emph{tropical cluster transformation} $\mu^x_\gamma: \cX_{(t)}(\bP) \to \cX_{(t')}(\bP)$ associated with $\gamma$ to be the composition of the PL isomorphisms associated with the edges it traverses.
\end{dfn}

Let $\bs$ be a seed pattern.
We say that two vertices $t, t' \in \bT_I$ are \emph{$\bs$-equivalent} (and write $t \sim_\bs t'$) if the corresponding seeds are isomorphic, that is,  $\sigma. B^{(t)} = B^{(t')}$ for some $\sigma \in \fS_I$.
Then, the following linear isomorphism gives a seed isomorphism:
\begin{align}\label{eq:seed isom}
    (N^{(t')},B^{(t')}) \to (N^{(t)},B^{(t)})\ ;\quad
    e^{(t')}_\numi \mapsto e^{(t)}_{\sigma^{-1}(\numi)}.
\end{align}
Let $\gamma_\nu: t_\nu \to t'_\nu$ be an edge path in $\bT_I$ such that $t_\nu \sim_\bs t'_\nu$ for $\nu=1,2$.
We say that $\gamma_1$ and $\gamma_2$ are \emph{$\bs$-equivalent}
if there exists a path $\delta: t_1 \to t_2$ such that the following diagram commutes:
\begin{equation}\label{eq:equivalence of paths}
\begin{tikzcd}
    \cX_{(t_1)}(\bZ^\trop) \ar[r, "\mu^x_{\gamma_1}"] \ar[d, "\mu^x_\delta"'] &\cX_{(t'_1)}(\bZ^\trop) \ar[r, "\sim"]& \cX_{(t_1)}(\bZ^\trop) \ar[d, "\mu_{\delta}^x"]\\
    \cX_{(t_1)}(\bZ^\trop) \ar[r, "\mu^x_{\gamma_2}"] & \cX_{(t'_2)}(\bZ^\trop) \ar[r, "\sim"] & \cX_{((t_2)}(\bZ^\trop).
\end{tikzcd}
\end{equation}
Here the isomorphism $\cX_{(t'_\nu)}(\bZ^\trop) \xrightarrow{\sim} \cX_{(t_\nu)}(\bZ^\trop)$ is induced by the seed isomorphism \eqref{eq:seed isom} for $\nu=1,2$. 
The $\bs$-equivalence class containing an edge path $\gamma$ is denoted by $[\gamma]_{\bs}$.
Note that the commutativity of the diagram \eqref{eq:equivalence of paths} does not depend on the choice of the path $\delta$.

\begin{dfn}[mutation loops]
A \emph{mutation loop} is an $\bs$-equivalence class of an edge path $\gamma:t \to t'$ in the labeled exchange graph $\bT_I$ such that $t \sim_\bs t'$. We call $\gamma$ a \emph{representation path} of the horizontal mutation loop if $\phi=[\gamma]_{\bs}$. 
\end{dfn}

\begin{rmk}
The definition of mutation loops presented above looks very different to the one given in \cite{IK19,IK20a}.
Nevertheless, they are the same definition by \cite[Lemma 3.8]{IK19} and the \emph{periodicity theorem} \cite[Theorem 5.1]{IIKKN}.
\end{rmk}

For a horizontal mutation loop $\phi$, take a representation path $\gamma:t \to t'$.
Then we have the following composite of PL isomorphisms:
\begin{align}\label{eq:coord_expression}
    \phi^x_{(t)}: \X_{(t)}(\bR^\trop) \xrightarrow{\mu_\gamma^x} \X_{(t')}(\bR^\trop) \xrightarrow{\sim} \X_{(t)}(\bR^\trop).
\end{align}
It induces an automorphism on the tropical cluster variety $\X_\bs(\bR^\trop)$, as follows:
\begin{equation*}
\begin{tikzcd}
    \cX_{(t)}(\bR^\trop) \ar[r, "\mu_\gamma^z"] \ar[d] & \cX_{(t')}(\bR^\trop) \ar[d] \ar[r, "\sim"] &\cX_{(t)}(\bR^\trop) \ar[d]\\
    \cX_\bs(\bR^\trop) \ar[r, equal] & \cX_\bs(\bR^\trop) \ar[r, "\phi^x"'] & \cX_\bs(\bR^\trop). 
\end{tikzcd}
\end{equation*}
Here the vertical maps are coordinate embeddings given by the definition of the tropical cluster variety.


\subsection{(Co-)t-structures and tiltings}
For an abelian category $\sA$ and a triangulated category $\sD$, we denote $K_0(\sA)$ and $K_0(\sD)$ by the Gronthendieck group of $\sA$ and $\sD$, respectively.
Also for an additive category $\sB$, $K_0^\spl(\sB)$ denotes the split Grothendieck group of it.
For an object $E$, $[E]$ denotes the class of the (split) Grothendieck group represented by $E$.

For a subcategory $\sU$ of a triangulated category $\sD$, we write $\sU^\bot$ and ${}^\bot \sU$ for the right and left orthogonal subcategory of it respectively:
\begin{align*}
    \sU^\bot &:= \{ V \in \sD \mid \Hom_\sD(U, V) = 0 \mbox{ for all $U \in \sU$} \},\\
    {}^\bot \sU &:= \{ V \in \sD \mid \Hom_\sD(V, U) = 0 \mbox{ for all $U \in \sU$} \}.
\end{align*}
Moreover, for two subcategories $\sU$ and $\sV$, we write $\sU \star \sV$ for the join of them, namely the full subcategory consists of an object $E \in \sD$ which fits into an exact triangle
\begin{align*}
    U \to E \to V \to U[1]
\end{align*}
for some $U \in \sU$ and $V \in \sV$.

\begin{defi}[t-structure]
A \emph{t-structure} of a triangulated category $\sD$ is a subcategory $\sP$ of $\sD$ such that $\sP[1] \subset \sP$ and $\sD = \sP \star \sP^\bot$.
Moreover, the t-structure is \emph{bounded} if it satisfying $\sD = \bigcup_{i,j} \sP[i] \cap \sP^\bot[j]$.
\end{defi}

For a t-structure $\sP \subset \sD$, its \emph{heart} is the subcategory $\sA = \sP \cap \sP^\bot[1]$.
It is well-known that the heart $\sA$ of a t-structure is abelian category and satisfies $K_0(\sD) \cong K_0(\sA)$.

By the second condition of the definition of t-structure, each object $E \in \sD$ has a decomposition $X \to E \to Y \to X[1]$ with $X \in \sP$ and $Y \in \sP^\bot$.
The correspondences $E \mapsto X$ and $E \mapsto Y$ give functors $t_{\leq 0}: \sD \to \sP$ and $t_{\geq 1}: \sD \to \sP^\bot$ respectively.
Actually the composition $t_{\leq 0} \circ [-1] \circ t_{\geq 1} \circ [1]$ is cohomological.
We write $H_\sA^0$ for it and we write $H^n_\sA := [-n] \circ H^0_\sA \circ [n]$.
For $E \in \sD$, we call $H^n_\sA(E)$ the \emph{$n$-th cohomology of $E$ associated with the heart $\sA$}.
If the t-structure is bounded, the cohomology complex associated with the heart is bounded.

For subcategories of an abelian category, define the orthogonal subcategories and the joins like as above.
A pair of subcategories $(\sT, \sF)$ of an abelian category $\sA$ is a \emph{torsion pair} if $\Hom(\sT, \sF) = 0$ and $\sA = \sT \star \sF$.
Let $(\sT, \sF)$ be a torsion pair in a heart $\sA$ of a t-structure $\sP \subset \sD$.
Then, the subcategory $\sP \star \sT[-1]$ is a new t-structure and its heart is $\sF \star \sT[-1]$.
Also the subcategory ${}^\bot(\sF \star \sP^\bot)$ is a new t-structure and its heart is $\sF[1] \star \sT$.
The former $\sF \star \sT[-1]$ (resp. latter $\sF[1] \star \sT$) is called \emph{right} (resp. \emph{left}) \emph{tilt} of $\sA$ at $(\sT, \sF)$.

Next, we recall the `dual' concept of t-structure in some sense, called a co-t-structure or a weight structure.
\begin{defi}[Co-t-structure]
A \emph{co-t-structure} of a triangulated category $\sD$ is a subcategory $\sQ$ of $\sD$ such that $\sQ[-1] \subset \sQ$ and $\sD = \sQ \star \sQ^\bot$.
Moreover, the co-t-structure is \emph{bounded} if it satisfying $\sD = \bigcup_{i,j} \sQ[i] \cap \sQ^\bot[j]$.
\end{defi}

For a co-t-structure $\sQ \subset \sD$, its \emph{co-heart} is the subcategory $\sB = \sQ \cap \sQ^\bot[-1]$.
In the contrast to the t-structures, the co-heart $\sB$ of a co-t-structure is an  additive category but it is not an abelian category in general.
Nevertheless, we have $K_0^{\mathrm{split}}(\sB) \cong K_0(\sD)$.
Also, the decomposition associated with the second condition of co-t-structure, which is called \emph{weight decomposition}, is `functorial up to morphisms that are zero on cohomology'.
That is, a weight decomposition gives a functor to some quotient category of the homotopy category of complexes over a co-heart (\emph{cf.} \cite[Section 3]{Bon}).
In particular for $E \in \sD$, by choosing the weight decomposition of $E$, we obtain the complex $t^\bullet(E)$ over the co-heart $\sB$ which is called \emph{weight complex} of $E$ (\emph{cf.} \cite[Section 2.2]{Bon}).
The next lemma is useful in later sections.

\begin{lem}[{\cite[Proof of Theorem 3.3.1.I]{Bon}}]\label{lem:split_weight_complex}
Let $D \to E \to F \to D[1]$ be an exact triangle of $\sD$ and take weight decompositions of $D$ and $F$.
Then, there is a weight decomposition of $E$ such that the triangle of weight complexes
\begin{align*}
    t^\bullet(D) \to t^\bullet(E) \to t^\bullet(F) \to t^\bullet(D)[1]
\end{align*}
which splits componentwisely, namely $t^n(E) = t^n(D) \oplus t^n(F)$ for any $n \in \bZ$.
\end{lem}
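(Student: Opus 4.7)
The plan is to imitate Bondarko's octahedral-axiom construction: build compatible weight decompositions of $E$ from those of $D$ and $F$, and then deduce that the induced triangles on co-heart components split by virtue of the ``negativity'' of $\sB$. I write $\sD_{w \leq n}$ and $\sD_{w \geq n}$ for the weight ranges associated to the paper's co-t-structure, so that a weight decomposition at level $n$ of an object $X \in \sD$ is an exact triangle $w_{\leq n}X \to X \to w_{>n}X \to w_{\leq n}X[1]$ with $w_{\leq n}X \in \sD_{w \leq n}$ and $w_{>n}X \in \sD_{w \geq n+1}$, and the $n$-th piece $t^n(X) \in \sB$ of the weight complex arises, up to a shift by $n$, as the cone of the comparison morphism $w_{\leq n-1}X \to w_{\leq n}X$ between two consecutive truncations.

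For the construction I fix $n \in \bZ$ and consider the composition
\[
w_{\leq n}F \to F \to D[1] \to w_{>n}D[1].
\]
This is a morphism in $\Hom_\sD(\sD_{w \leq n}, \sD_{w \geq n+2})$, which vanishes by the orthogonality axiom of the co-t-structure. Hence $w_{\leq n}F \to D[1]$ lifts along $w_{\leq n}D[1] \to D[1]$ to a morphism $\alpha_n \colon w_{\leq n}F \to w_{\leq n}D[1]$, and I take $w_{\leq n}E$ as its fibre, yielding the triangle
\[
w_{\leq n}D \to w_{\leq n}E \to w_{\leq n}F \xrightarrow{\alpha_n} w_{\leq n}D[1].
\]
Closure of $\sD_{w \leq n}$ under extensions places $w_{\leq n}E$ there, and a $3 \times 3$/octahedral argument produces a corresponding triangle $w_{\leq n}E \to E \to w_{>n}E \to w_{\leq n}E[1]$ which is a weight decomposition of $E$. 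Assembling these across $n$ and applying the octahedral axiom to the three comparison triangles $w_{\leq n-1}X \to w_{\leq n}X \to t^n(X)[n] \to$ for $X \in \{D, E, F\}$, I extract, after desuspending by $n$, an exact triangle
\[
t^n(D) \to t^n(E) \to t^n(F) \to t^n(D)[1]
\]
whose three leftmost terms lie in $\sB$.

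Since the co-heart of a co-t-structure is negative, i.e.\ $\Hom_\sD(\sB, \sB[1]) = 0$ (note that for $X, Y \in \sB$ one has $X \in \sD_{w \leq 0}$ and $Y[1] \in \sD_{w \geq 1}$, so the orthogonality applies), the connecting morphism in the above triangle vanishes. The triangle therefore splits, yielding the required identification $t^n(E) \cong t^n(D) \oplus t^n(F)$ for every $n \in \bZ$, and under this identification the maps in the triangle become the canonical inclusion and projection, so the triangle of weight complexes splits componentwise as asserted.

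The main obstacle is a coherence issue: weight decompositions are unique only up to non-canonical isomorphism, so one must choose those of $E$ so that they fit into a genuine Postnikov tower compatible with the chosen towers for $D$ and $F$. This is precisely the point addressed in Bondarko's original argument, where the weight complex functor is in fact well-defined only after passing to a quotient of $K^b(\sB)$ by a suitable ideal that absorbs exactly this ambiguity; for the present lemma it suffices to fix such coherent choices inductively in $n$, which is possible because every extension problem that arises is governed by a hom-group that vanishes by the same co-t-structure orthogonality used to define $\alpha_n$.
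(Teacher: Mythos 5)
Your proposal is correct and follows essentially the same route as Bondarko's argument, which the paper cites as its proof for this lemma. The lifting of $w_{\leq n}F \to D[1]$ through $w_{\leq n}D[1]$ via orthogonality, the fibre construction for $w_{\leq n}E$, the assembly into a compatible Postnikov tower, and the final splitting via $\Hom_\sD(\sB,\sB[1])=0$ are all present and correctly justified, with the coherence step compressed but, as you note, controllable by the same orthogonality vanishing that you use to define $\alpha_n$.
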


\subsection{(Co-)stability conditions}

We put
\[\rH := \{ z \in \bC : |z|>0,\ \arg(z)/\pi \in (0,1] \}. \]

\begin{defi}[stability function]
A \emph{stability function} on an abelian category $\sA$ is a group homomorphism $Z: K_0(\sA) \to \bC$ such that $Z(E) \in \rH$ for all $0 \neq E \in \sA$.
\end{defi}

When a stability function $Z: K_0(\sA) \to \bC$ is given, the \emph{phase} $\varphi(E)$ of an object $0 \neq E \in \sD$ is defined to be $\arg(Z(E))/\pi \in (0,1]$.

\begin{defi}[semistable object]
Let $Z$ be a stability function on an abelian category $\sA$.
An object $0 \neq E \in \sA$ is \emph{semistable} (with respect to $Z$) if $\varphi(E') \leq \varphi(E)$ for all subobject $0 \neq E' \subset E$.
\end{defi}

\begin{defi}[Harder--Narasimhan property]
The stability function $Z: K_0(\sA) \to \bC$ is said to have the \emph{Harder--Narasimhan property} if for any $0 \neq E \in \sA$, there is a finite sequence of subobjects of $E$
\[ 0 = E_0 \subset E_1 \subset \cdots \subset E_n = E \]
such that $A_i = E_i/E_{i-1}$ are semistable and whose phases are decreasing:
\[ \varphi(A_1) > \varphi(A_2) > \cdots > \varphi(A_n). \]
\end{defi}

\begin{defi}[stability condition]
A \emph{stability condition} on a triangulated category $\sD$ is a pair $(\sA, Z)$ of a heart $\sA$ of a bounded t-structure on $\sD$ and a stability function $Z$ on $\sA$ with the Harder--Narasimhan property.
\end{defi}

For a stability condition $\sigma = (\sA, Z)$, one can think $Z$ as a group homomorphism from $K_0(\sD)$ to $\bC$ since $K_0(\sA) \cong K_0(\sD)$.
We call $Z: K_0(\sD) \to \bC$ \emph{central charge} of $\sigma$.
Furthermore, we call an object $E \in \sD$ is \emph{semistable} if $E \in \sA[k]$ for some $k \in \bZ$ and $E[-k]$ is semistable with respect to $Z$.
We call the real number $\varphi_\sigma(E) := \varphi(E[-k]) + k$ the \emph{phase} of $E$ with respect to $\sigma$.

In what follows, we always assume that the Grothendieck groups of any triangulated categories are lattices of finite rank.
Let $\Stab(\sD)$ denotes the set of stability conditions on $\sD$ satisfying \emph{support property}, that is, for some norm $\| - \|$ on $K_0(\sD) \otimes \bR$, there is a constant $C >0$ such that $\| [A] \| < C |Z(A)|$ for each semistable object $A \in \sD$.

In order to explain the metric on the set $\Stab(\sD)$, we prepare the following:
\begin{prop}[Harder--Narasimhan filtration in a triangulated category]\label{prop:HN_filt}
Let $(\sA, Z)$ be a stability condition of a triangulated category $\sD$.
Then, for any object $0 \neq E \in \sD$, there is a sequence of exact triangles
\[\begin{tikzcd}[column sep = small]
0 \ar[r, equal] &[-2mm] E_0 \ar[rr] && E_1 \ar[rr] \ar[ld] && E_2 \ar[ld] \ar[r]& \cdots \ar[r] & E_{n-1} \ar[rr] && E_n \ar[r, equal] \ar[ld] &[-2mm] E \\
&& A_1 \ar[lu, dashed] && \ar[lu, dashed] A_2 &&&& \ar[lu, dashed] A_n
\end{tikzcd}\]
where $A_i$ is semistable of phase $\varphi_i$ for each $i=1, \dots, n$ and $\varphi_1 > \varphi_2 > \cdots > \varphi_n$.
\end{prop}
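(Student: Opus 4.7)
\medskip

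The plan is to reduce the existence of a Harder--Narasimhan filtration in $\sD$ to the existence of Harder--Narasimhan filtrations inside the heart $\sA$, which is guaranteed by the assumption that $Z$ has the Harder--Narasimhan property on $\sA$. The overall strategy is a standard argument, essentially the one Bridgeland uses when he introduces stability conditions: first decompose $E$ vertically, via the bounded t-structure, into its cohomology objects $H^i_\sA(E)[-i]$, and then decompose each cohomology object horizontally, via the Harder--Narasimhan filtration inside the abelian category $\sA$.

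More concretely, I would first note that since the t-structure with heart $\sA$ is bounded, there exist integers $m \leq n$ such that $H^i_\sA(E) = 0$ outside $[m, n]$. Applying the truncation functors, one obtains for each $i \in [m,n]$ an exact triangle
\[
\tau_{\leq i-1} E \too \tau_{\leq i} E \too H^i_\sA(E)[-i] \too (\tau_{\leq i-1} E)[1],
\]
giving a filtration of $E = \tau_{\leq n} E$ whose successive quotients are the shifted cohomology objects $H^i_\sA(E)[-i]$. Since $H^i_\sA(E) \in \sA$, any semistable summand arising from this object lies in $\sA[-i]$ and hence has phase in $(-i, -i+1]$. Reading the filtration from top to bottom (i.e.\ from $i = n$ down to $i = m$), the shifted cohomology pieces are automatically arranged so that their phases occupy non-overlapping intervals with those of lower index lying strictly above. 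This is the key reason why assembling the filtrations works.

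Second, within each nonzero $H^i_\sA(E)$, invoke the Harder--Narasimhan property of $Z$ on $\sA$ to obtain a chain $0 = F^i_0 \subset F^i_1 \subset \cdots \subset F^i_{n_i} = H^i_\sA(E)$ in $\sA$ with semistable quotients $A^i_j = F^i_j / F^i_{j-1}$ of strictly decreasing phases $\varphi(A^i_1) > \cdots > \varphi(A^i_{n_i})$ in $(0,1]$. Shifting by $[-i]$, this short exact sequences in $\sA$ become exact triangles in $\sD$, giving a refinement of the single triangle above into a chain whose successive cones are the $A^i_j[-i]$, each semistable of phase $\varphi(A^i_j) - i \in (-i, -i+1]$. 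Splicing the refinements for $i = n, n-1, \dots, m$ in decreasing order yields a single iterated triangle of the shape displayed in the statement, whose successive cones are semistable with strictly decreasing phases.

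The main subtlety, and the only place where one must be careful, is checking that concatenating the triangles in this way actually produces a valid iterated triangle of the stated form with a single target $E$. This is a routine octahedral exercise: given the two triangles $\tau_{\leq i-1}E \to \tau_{\leq i} E \to H^i_\sA(E)[-i]$ and the refinement of $H^i_\sA(E)[-i]$ into a tower of triangles with semistable cones, one applies the octahedral axiom inductively to rewrite the whole thing as a single tower whose cones are the refinement's cones. I would not grind through this construction explicitly; it is entirely formal once the two ingredients (t-structure cohomology and Harder--Narasimhan in $\sA$) are in place, and the phase inequalities for the combined filtration follow from the two facts recorded above: phases within each band $(-i, -i+1]$ are strictly decreasing by the HN property in $\sA$, and phases from different bands are automatically separated by the shift.
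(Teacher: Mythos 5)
Your proof is correct and is precisely the standard Bridgeland argument for this fact, which the paper states without reproducing a proof: truncate via the bounded t-structure to get a tower with cones $H^i_\sA(E)[-i]$, refine each cone by its Harder--Narasimhan filtration in $\sA$ shifted by $[-i]$, note that the phase bands $(-i,-i+1]$ are disjoint and strictly decreasing in $i$, and splice the refinements into a single tower by the octahedral axiom. No gaps; the one step you flag as a "routine octahedral exercise" is indeed the only technical point and is handled in the standard way.
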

We call such sequence of exact triangles \emph{Harder--Narasimhan filtration} of $E \in \sD$.
Moreover, $\varphi^+_\sigma(E) := \varphi_1$, $\varphi^-_\sigma(E) := \varphi_n$ and $m_\sigma(E) := \sum_i |Z(A_i)|$ which is called \emph{mass} of $E$.
Since the Harder--Narasimhan filtration is unique up to isomorphism, the above functions are well-defined.

\begin{prop}[{\cite[Proposition 8.1]{Bri}}]
Let $\sD$ be a triangulated category.
The function 
\begin{align*}
    d(\sigma_1, \sigma_2) = \sup_{0 \neq E \in \sD} \left\{
    |\varphi^+_{\sigma_1}(E) - \varphi^+_{\sigma_2}(E)|, |\varphi^-_{\sigma_1}(E) - \varphi^-_{\sigma_2}(E)|, \Big| \log \frac{m_{\sigma_1}(E)}{m_{\sigma_2}(E)} \Big|
    \right\} \in [0, \infty]
\end{align*}
defines a generalized metric on $\Stab(\sD)$.
\end{prop}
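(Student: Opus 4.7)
The plan is to verify the four properties of a generalized metric (allowing the value $+\infty$): reflexivity, symmetry, the triangle inequality, and the identity of indiscernibles. Reflexivity $d(\sigma,\sigma)=0$ is immediate since each of the three entries under the supremum vanishes. Symmetry is clear because each entry is symmetric in $\sigma_1,\sigma_2$ (the absolute value and the logarithm-ratio flip sign when the arguments are swapped). Non-negativity follows since each entry is a non-negative real number.

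For the triangle inequality, I would argue pointwise and then take the supremum. Fix a nonzero $E \in \sD$. The ordinary triangle inequality for $|\cdot|$ on $\bR$ gives
\begin{align*}
|\varphi^\pm_{\sigma_1}(E)-\varphi^\pm_{\sigma_3}(E)| &\le |\varphi^\pm_{\sigma_1}(E)-\varphi^\pm_{\sigma_2}(E)| + |\varphi^\pm_{\sigma_2}(E)-\varphi^\pm_{\sigma_3}(E)|,\\
\left|\log\frac{m_{\sigma_1}(E)}{m_{\sigma_3}(E)}\right| &\le \left|\log\frac{m_{\sigma_1}(E)}{m_{\sigma_2}(E)}\right| + \left|\log\frac{m_{\sigma_2}(E)}{m_{\sigma_3}(E)}\right|,
\end{align*}
and each right-hand side is bounded by $d(\sigma_1,\sigma_2)+d(\sigma_2,\sigma_3)$. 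Hence the maximum of the three left-hand sides is bounded by the same quantity, and taking the supremum over $E$ yields $d(\sigma_1,\sigma_3)\le d(\sigma_1,\sigma_2)+d(\sigma_2,\sigma_3)$.

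The main point, which I expect to be the delicate step, is the implication $d(\sigma_1,\sigma_2)=0 \Rightarrow \sigma_1=\sigma_2$. Under this hypothesis, $\varphi^+_{\sigma_1}=\varphi^+_{\sigma_2}$ and $\varphi^-_{\sigma_1}=\varphi^-_{\sigma_2}$ on every object. An object $E$ is $\sigma_i$-semistable of phase $\varphi$ exactly when $\varphi^+_{\sigma_i}(E)=\varphi^-_{\sigma_i}(E)=\varphi$, so the two stability conditions have identical semistable subcategories $\sP_1(\varphi)=\sP_2(\varphi)$ for every $\varphi\in\bR$. In particular the hearts $\sA_i$, characterized as the extension closure of the semistables of phase in $(0,1]$ (equivalently, as $\bigcap_{\varphi \le 0}\sP_i(\varphi)^\perp \cap \bigcap_{\varphi>1}{}^\perp\sP_i(\varphi)$), coincide. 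From $m_{\sigma_1}=m_{\sigma_2}$ and the common phase function we recover the central charges on semistables by $Z_i(E)=m_{\sigma_i}(E)\,e^{i\pi\varphi_{\sigma_i}(E)}$; since semistable classes generate $K_0(\sA_1)=K_0(\sA_2)=K_0(\sD)$ via Harder--Narasimhan filtrations (\cref{prop:HN_filt}) and $Z_i$ is additive, this forces $Z_1=Z_2$ and hence $\sigma_1=\sigma_2$.

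Finally, to confirm that $d$ is a \emph{generalized} metric rather than a genuine one, I would exhibit the fact that the supremum may be $+\infty$: for instance, if two stability conditions have hearts whose shifts differ, there exist semistables whose phases differ by arbitrarily large integers, making the phase-supremum infinite. This completes the verification, modulo the standard fact (cited already via \cref{prop:HN_filt}) that the Harder--Narasimhan filtration exists and is unique up to isomorphism, which is what legitimizes $\varphi^\pm_\sigma$ and $m_\sigma$ as well-defined functions on $\sD\setminus\{0\}$.
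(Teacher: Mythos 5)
The paper does not prove this statement; it is quoted directly from Bridgeland (\cite[Proposition~8.1]{Bri}) without an argument. Your attempt is correct and follows Bridgeland's own line: symmetry and the pointwise triangle inequality for each of the three quantities are immediate, and the only real content is $d(\sigma_1,\sigma_2)=0\Rightarrow\sigma_1=\sigma_2$, which you derive correctly by observing that an object $E$ is $\sigma$-semistable of phase $\varphi$ precisely when $\varphi^+_\sigma(E)=\varphi^-_\sigma(E)=\varphi$, so that equality of $\varphi^\pm$ forces $\sP_1(\varphi)=\sP_2(\varphi)$ for every $\varphi$, hence equal hearts; combining $m_{\sigma_1}=m_{\sigma_2}$ with the common phases then recovers $Z_i(E)=m_{\sigma_i}(E)\,e^{i\pi\varphi_{\sigma_i}(E)}$ on semistables (using that $m_\sigma(E)=|Z(E)|$ when $E$ is semistable), and since semistable classes span $K_0(\sD)$ via Harder--Narasimhan filtrations, $Z_1=Z_2$. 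Your remark that the supremum can be $+\infty$ (e.g.\ when hearts differ by shifts) is exactly the reason this is only a \emph{generalized} metric and is an appropriate closing observation.
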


This generalized metric induces a topology on $\Stab(\sD)$ and induces a metric space structure on each connected component of it.

\begin{thm}[{\cite{Wo}}]
Led $\sD$ be a triangulated category and let $\Stab^\dagger(\sD)$ be a connected component of $\Stab(\sD)$.
Then metric space $(\Stab^\dagger(\sD), d)$ is complete.
\end{thm}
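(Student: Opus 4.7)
The plan is to take a Cauchy sequence $\{\sigma_n = (\sA_n, Z_n)\}_{n\ge 1}$ in $\Stab^\dagger(\sD)$ and build a limiting stability condition $\sigma_\infty=(\sA_\infty, Z_\infty)$ together with its slicing $\cP_\infty$ to which the sequence converges. First I would construct the limit central charge: for any nonzero $E \in \sD$, the Cauchy condition applied to $E$ shows that $\varphi^\pm_{\sigma_n}(E)$ and $\log m_{\sigma_n}(E)$ are Cauchy in $\bR$, hence the complex numbers $Z_n(E)$ converge. Choosing finitely many $\sigma_1$-semistable objects whose classes span $K_0(\sD)\otimes_\bZ \bR$ (possible since $K_0(\sD)$ has finite rank), one obtains $Z_\infty := \lim_n Z_n \in \Hom(K_0(\sD), \bC)$.

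Next I would define a candidate slicing by declaring $\cP_\infty(\phi)$ to consist of the zero object together with all $E$ for which $\lim_n \varphi^+_{\sigma_n}(E) = \lim_n \varphi^-_{\sigma_n}(E) = \phi$. The axioms of a slicing then have to be checked: the shift relation $\cP_\infty(\phi)[1] = \cP_\infty(\phi+1)$ is inherited from each $\sigma_n$; the vanishing $\Hom(\cP_\infty(\phi_1), \cP_\infty(\phi_2))=0$ for $\phi_1>\phi_2$ follows because for $n$ sufficiently large the one-sided phases at $\sigma_n$ already separate the two objects, so the analogous vanishing at $\sigma_n$ applies; and the argument condition $Z_\infty(E)\in \bR_{>0} \cdot e^{i\pi\phi}$ for $0\ne E\in \cP_\infty(\phi)$ passes to the limit.

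The main obstacle is the existence of Harder--Narasimhan filtrations for $\sigma_\infty$, which in turn yields the heart $\sA_\infty=\cP_\infty((0,1])$ and the full stability structure. Given $E$, I would fix a large $n$ with $d(\sigma_n,\sigma_m)<\epsilon$ for all $m\ge n$ and consider its HN filtration with semistable factors $A_1,\dots,A_k$ of $\sigma_n$-phases $\varphi_1>\cdots>\varphi_k$. The Cauchy property forces each $\varphi_i$ to drift by less than $\epsilon$ for all $m\ge n$, so in the limit the sequence $(\varphi_i^{(\infty)})$ is non-increasing but some consecutive entries may coalesce; grouping coalesced blocks and replacing them by the cone of the corresponding segment of the filtration produces an $\sigma_\infty$-HN filtration of $E$, provided the resulting cones are $\sigma_\infty$-semistable. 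The crucial input here is the support property: it furnishes a uniform bound $\|[A]\|\le C\,|Z_n(A)|$ valid for all $n$ in a neighborhood (by openness of the support constant), which both bounds the length of HN filtrations uniformly and guarantees that the coalesced factors do not escape to infinity.

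Finally, the support property for $\sigma_\infty$ is obtained by combining the uniform constant from the previous step with continuity of $Z$, and convergence $\sigma_n\to\sigma_\infty$ in the generalized metric $d$ is read off directly from the definition of $\cP_\infty$ and of $Z_\infty$, since $\varphi^\pm_{\sigma_n}(E)\to \varphi^\pm_{\sigma_\infty}(E)$ and $m_{\sigma_n}(E)/m_{\sigma_\infty}(E)\to 1$ uniformly in $E$ by the Cauchy estimate. I expect the most delicate technical point to be the uniform control of HN lengths and the passage to the limit HN filtration, since a priori nothing prevents an HN filtration at $\sigma_n$ from having its length blow up as $n\to\infty$; the support property, transported along the Cauchy sequence, is what rules this out.
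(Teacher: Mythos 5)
The paper offers no proof of this result: it is imported verbatim from Woolf's article \cite{Wo}, so there is no in-paper argument for your sketch to be compared against. I will therefore evaluate the sketch on its own merits.

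Your overall strategy -- extract a limit central charge, declare $\cP_\infty(\phi)$ to be the objects whose $\sigma_n$-phases converge to $\phi$, and then manufacture Harder--Narasimhan filtrations by collapsing coalescing blocks -- is the natural one, and several steps (convergence of $Z_n$ on a basis of $K_0(\sD)\otimes\bR$, the shift and Hom-vanishing axioms) can be carried out essentially as you describe. The genuine gap is exactly the point you flag and then set aside: when consecutive HN factors $A_i,\dots,A_j$ at $\sigma_n$ have phases that coalesce as $n\to\infty$, you replace them by the cone $E_j/E_{i-1}$ and assert this is $\sigma_\infty$-semistable. Nothing in the sketch actually establishes this. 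Semistability at $\sigma_\infty$ must be checked against \emph{all} subobjects/quotients in the limit heart $\cP_\infty((0,1])$, and you have not yet identified what that heart is (you are defining it in terms of the very filtrations you are trying to build), nor shown that destabilizing subobjects at $\sigma_\infty$ would have contradicted the estimates at large $n$. This circularity is the heart of Woolf's theorem and needs to be resolved, not assumed.

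A second, smaller gap concerns the claim that the support property ``bounds the length of HN filtrations uniformly.'' The support property controls $\|[A]\|/|Z(A)|$, hence bounds total mass against $\|[E]\|$; it does not by itself bound the \emph{number} of HN factors, since arbitrarily many factors of tiny mass are not excluded by a mass bound alone. To make this work one needs to combine the uniform support constant with the discreteness of the image of $K_0(\sD)$ under $Z_n$ (or with a quantitative version of the deformation argument). Without that, the ``passage to the limit HN filtration'' step is not yet justified, and this is where the real technical content of the completeness theorem lies.
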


There are two natural group actions on $\Stab(\sD)$.
The first one is the left action by the group $\Aut(\sD)$ of autoequivalences of $\sD$ defined by
\begin{align*}
    F \cdot \sigma := (F(\sA),\, Z \circ [F^{-1}])
\end{align*}
for $\sigma = (\sA, Z) \in \Stab(\sD)$ and $F \in \Aut(\sD)$.
Here, $[F]$ denotes the induced automorphism on $K_0(\sA)$.
The second one is the right action by the universal cover $\widetilde{GL}\! \, ^+(2, \bR)$ of $GL^+(2, \bR)$ defined by
\begin{align*}
    \sigma \cdot g := \big(\sA',\, \overline{g}^{-1} \circ Z\big)
\end{align*}
for $\sigma = (\sA, Z) \in \Stab(\sD)$ and $g \in \widetilde{GL}\! \, ^+(2, \bR)$.
Here $\sA'$ is the extension closed subcategory of $\sD$ generated by the semistable objects $E \in \sD$ with respect to $\sigma$ such that $\varphi_\sigma(E) \in (\theta(g), \theta(g)+1]$, $\overline{g}$ denotes the projection of $g$ under the covering map and $\theta(g)$ is the angle of the rotation induced by $g$.
We think $\bC$ is a subgroup of $\widetilde{GL}\! \, ^+(2, \bR)$ as $\overline{\lambda} = \exp(\pi \lambda \sqrt{-1})$ and $\theta(\lambda) = \mathrm{Re}(\lambda)$.
In particular, both of the $\Aut(\sD)$-action and the $\bC$-action preserve the metric.

Next, we consider the `dual' concept of stability conditions in some sense, called co-stability conditions.
For additive categories, we define the stability function on them in the same way as for abelian categories by using split Grothendieck groups instead of Grothendieck groups.
Also we define the phase of objects in the same way.
We are going to consider the split version of semistable objects and Harder--Narasimhan properties.

\begin{defi}[split semistable object]
Let $W$ be a stability function on an additive category $\sB$.
An object $0 \neq E \in \sB$ is \emph{split semistable} (with respect to $W$) if $\varphi(E') = \varphi(E)$ for any direct summand $E' \subset E$.
\end{defi}

\begin{defi}[split Harder--Narashimhan property]
The stability function $W$ on $\sB$ is said to have the \emph{split Harder--Narasimhan property} if any $0 \neq E_1, E_2 \in \sB$ with $\varphi(E_1) < \varphi(E_2)$ satisfy $\Hom_\sB(E_1, E_2) = 0$, and each object $E \in \sB$ has a decomposition $E = B_1 \oplus B_2 \oplus \cdots \oplus B_s$ such that $B_i$ are split semistable.
\end{defi}

\begin{defi}[co-stability condition]
A \emph{co-stability condition} on a triangulated category $\sD$ is a pair $(\sB, W)$ of a co-heart $\sB$ of a bounded co-t-structure on $\sD$ and a stability function $W$ on $\sB$ with the split Harder--Narasimhan property.
\end{defi}

For a co-stability condition $\tau = (\sB, W)$, we define the split-semistable objects and the phase of objects with respect to $\tau$ in the same way as the stability conditions.

\begin{prop}[split Harder--Narasimhan filtration in a triangulated category]
Let $(\sB, W)$ be a co-stability condition of a triangulated category $\sD$.
Then, for any object $0 \neq E \in \sD$, there is a sequence of exact triangles
\begin{equation}\label{eq:split_HN_filt}
\begin{tikzcd}[column sep = small]
0 \ar[r, equal] &[-2mm] E_0 \ar[rr] && E_1 \ar[rr] \ar[ld] && E_2 \ar[ld] \ar[r]& \cdots \ar[r] & E_{n-1} \ar[rr] && E_n \ar[r, equal] \ar[ld] &[-2mm] E \\
&& B_1 \ar[lu, dashed] && \ar[lu, dashed] B_2 &&&& \ar[lu, dashed] B_n
\end{tikzcd}
\end{equation}
where $B_i$ is split semistable of phase $\varphi_i$ for each $i=1, \dots, n$ and $\varphi_1 < \varphi_2 < \cdots < \varphi_n$.
\end{prop}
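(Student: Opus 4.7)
The plan is to combine the weight tower coming from the bounded co-t-structure $\sQ$ with the split Harder--Narasimhan decomposition in the co-heart $\sB$, and then order the resulting graded pieces by phase.

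First, by boundedness of the co-t-structure, iterating the defining decomposition $\sD = \sQ \star \sQ^{\bot}$ produces, for each non-zero $E \in \sD$, a finite weight tower
\[
0 = F_0 \to F_1 \to \cdots \to F_k = E
\]
whose graded pieces are shifted co-heart objects $F_j/F_{j-1} \simeq C_j[w_j]$ with $C_j \in \sB$, where $w_1 < w_2 < \cdots < w_k$ are the weights appearing in $E$. This is the tower underlying the weight complex $t^\bullet(E)$ introduced in the previous subsection. Next, apply the split HN property in $\sB$ to each non-zero $C_j$: decompose $C_j = \bigoplus_{\ell} B_{j,\ell}$ as a direct sum of split semistable objects of phases $\varphi(B_{j,\ell}) \in (0,1]$. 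Any direct sum in $\sB$ gives rise to a split exact triangle in $\sD$, so the octahedral axiom applied iteratively refines the weight tower into a longer filtration whose graded pieces are the shifted split semistable objects $B_{j,\ell}[w_j]$ with effective phases $\varphi_{j,\ell} := \varphi(B_{j,\ell}) + w_j$.

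Second, I would arrange the graded pieces so that the effective phases are strictly increasing. Between different weights this is automatic: since $\varphi(B) \in (0,1]$ for any $B \in \sB$, the effective phases from weight $w_j$ lie in the interval $(w_j, w_j+1]$, and these intervals are pairwise disjoint with their order matching that of the $w_j$, so the weight tower itself already arranges the inter-weight pieces in strictly increasing phase order. Within a fixed weight, the pieces $B_{j,\ell}[w_j]$ are literal direct summands of $C_j[w_j]$, hence can be freely reordered; arranging the index $\ell$ so that $\varphi(B_{j,\ell})$ is increasing gives the desired inner ordering. Finally, collecting pieces of equal effective phase into a single split semistable direct sum (a direct sum of split semistables of equal phase is again split semistable) yields the required filtration with $\varphi_1 < \cdots < \varphi_n$.

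The main obstacle is the careful bookkeeping of the direction conventions relating the shift in the weight tower to the phase shift, together with the $\Hom$-vanishing $\Hom_\sD(B, B'[n]) = 0$ for $B, B' \in \sB$ and $n \geq 1$, which follows from $\sB \subset \sQ$ and $\sB[n] \subset \sQ^{\bot}$ for $n \geq 1$. This vanishing underpins both \cref{lem:split_weight_complex} and the validity of the octahedral refinement step. Uniqueness of the filtration up to isomorphism, needed for the well-definedness of $\varphi^{\pm}_\tau$ and $m_\tau$ in the co-stability setting, should follow by an inductive argument combining this $\Hom$-vanishing with the split HN uniqueness in $\sB$.
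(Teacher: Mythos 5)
Your construction---take the weight tower underlying $t^\bullet(E)$, which exists by boundedness of the co-t-structure, split each co-heart piece $C_j$ into split semistables via the split HN property in $\sB$, and refine the tower by the octahedral axiom, ordering pieces within a fixed weight by phase---is a sound route to existence and follows the same pattern as the t-structure case in \cref{prop:HN_filt}. Your phase bookkeeping is correct: a split semistable piece in weight $w_j$ has effective phase in $(w_j, w_j+1]$, and since these intervals are pairwise disjoint and ordered with the weights, the coarse weight tower already arranges inter-weight pieces in strictly increasing order. Note the paper itself gives no proof of this proposition, so there is no author argument to compare against.

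Your final paragraph, however, contains a genuine error. You assert that uniqueness of the filtration up to isomorphism ``should follow by an inductive argument combining this $\Hom$-vanishing with the split HN uniqueness in $\sB$.'' It does not: the text immediately following the proposition says in so many words, ``We note that the split Harder--Narasimhan filtration is not unique up to isomorphisms, unlike the stability conditions.'' The vanishing $\Hom_\sD(B, B'[n])=0$ for $B,B'\in\sB$ and $n\geq 1$ is not sufficient; a Bridgeland-style uniqueness argument would also require vanishing in negative degrees between pieces of the relevant relative phases, and the co-t-structure axioms do not supply this---weight decompositions are only functorial up to morphisms that die on cohomology (this is exactly the caveat in the discussion preceding \cref{lem:split_weight_complex}). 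The non-uniqueness is precisely why the paper defines $m_\tau(E)$ as an \emph{infimum} over all split HN filtrations, and why $\varphi^\pm_\tau$ are not introduced in the co-stability setting at all. A secondary, more minor gap: the step ``a direct sum of split semistables of equal phase is again split semistable,'' which you use to enforce the strict inequalities $\varphi_1<\cdots<\varphi_n$, is not automatic from the stated definitions and implicitly relies on a Krull--Schmidt hypothesis on $\sB$; without it, arbitrary direct summands of $B_1\oplus B_2$ need not have controlled phase.
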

We also call such sequence of exact triangles \emph{split Harder--Narasimhan filtration} of $E \in \sD$.
We note that the split Harder--Narasimhan filtration is not unique up to isomorphisms, unlike the stability conditions.
We define the mass $m_\tau(E)$ of $E$ as
\begin{align*}
    m_\tau(E) := \inf \bigg\{\sum_i |W(B_i)|\ \bigg|\ \begin{array}{cc}
         \hspace{-1cm}\mbox{$B_i$ are the factors appearing in the split}\\
         \quad \mbox{Harder--Narasimhan filtration of $E$ as \eqref{eq:split_HN_filt}}
    \end{array} \bigg\}.
\end{align*}

\subsection{Derived category of Ginzburg dg algebras}\label{subsec:Ginzburg_dga}

In this paper,
we fix the base field $\bC$.
A quiver with potential is a pair $(Q,W)$ of a finite quiver $Q$
and a potential $W$, which is a finite linear combination of oriented cycles of $Q$.
There is a mutation rule for quiver with potentials \cite{DWZ}.
For a quiver with potential $(Q,W)$ and a vertex $k$ of $Q$, $\mu_k(Q,W)$ denotes the mutated quiver with potential at $k$.
The potential $W$ of a quiver $Q$ is \emph{nondegenerate} if any iterated mutation for $(Q,W)$ is compatible with the usual quiver mutation for $Q$.
In our setting, there exists a nondegenerate potential of a quiver \cite[Corollary 7.4]{DWZ}.

For a quiver with potential $(Q,W)$, one can construct a 3-Calabi--Yau algebra $\Gamma_{Q,W}$ called Ginzburg dg algebra (\emph{cf.} \cite[Section 2.6]{KY}).
We will write $\Dfd(\Gamma_{Q,W})$ for the full subcategory of the derived category $\sD(\Gamma_{Q,W})$ of $\Gamma_{Q,W}$ consists of dg $\Gamma_{Q,W}$-modules whose total cohomologies are finite dimensional and call it \emph{finite dimensional derived category} of $\Gamma_{Q,W}$.
From the definition of 3-Calabi--Yau algebra, the category $\Dfd(\Gamma_{Q,W})$ is 3-Calabi--Yau, namely satisfying
\begin{align*}
    \Hom_{\sD \Gamma_{Q,W}}(M, N) \cong \Hom_{\sD \Gamma_{Q,W}}(N, M[3])^*
\end{align*}
for all $M, N \in \Dfd(\Gamma_{Q,W})$.
We will also consider the another subcategory $\per(\Gamma_{Q,W})$ which is a smallest thick subcategory containing $\Gamma_{Q,W}$, called \emph{perfect derived category} of $\Gamma_{Q,W}$.
Since $\Gamma_{Q,W}$ is homologically smooth, $\Dfd(\Gamma_{Q,W}) \subset \per(\Gamma_{Q,W})$.

The category $\Dfd(\Gamma_{Q,W})$ has a bounded t-structure defined by
\begin{align*}
    \{ M \in \Dfd(\Gamma_{Q,W}) \mid H^p(M) = 0  \mbox{ for all } p>0 \},
\end{align*}
called canonical t-structure.
Its heart $\sA_{Q,W}$, called the canonical heart, is of finite-length, namely it is Artinian and Noetherian as an abelian category.

The category $\per(\Gamma_{Q,W})$ has a co-t-structure defined by the smallest full subcategory containing $\{ \Gamma_{Q,W}[-n] \mid n \in \bZ_{\geq 0} \}$ and closed under extensions and direct summands.
Its co-heart is $\add(\Gamma_{Q,W})$.

Fix a quiver $Q^0$ with the set $I$ of vertices and its nondegenerate potential $W^0$ and fix a vertex $t_0 \in \bT_I$.
We are going to see that the category $\Dfd(\Gamma_{Q^0,W^0})$ gives the categorification of the seed pattern $\bs = \bs_{t_0; Q^0}$ with the exchange matrix at $t_0$ is the matrix $B_{Q^0}$ corresponding to the quiver $Q^0$.

It is known that the category $\Dfd(\Gamma_{Q,W})$ is generated by the simple modules $S_i$, $i \in I$, which are obtained from the simple $Q$-representations concentrated at each vertex $i \in I$.

\begin{lem}[{\cite[Lemma 2.15]{KY}}]\label{lem:dim_Ext}
For each vertices $i,j \in I$, we have
\begin{align*}
    \dim \Hom_{\sD\Gamma_{Q,W}}(S_i, S_j[n]) = 
    \left\{ \begin{array}{ll}
    \delta_{i,j} & \mbox{if }n=0,\\
    Q_{j,i} & \mbox{if }n=1,\\
    Q_{i,j} & \mbox{if }n=2,\\
    \delta_{i,j} & \mbox{if }n=3,\\
    0 & \mbox{otherwise}.
\end{array}\right.
\end{align*}
Here $Q_{i,j} := \# \{ \mbox{$i \to j$ in $Q$} \}$.
\end{lem}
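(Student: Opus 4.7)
My plan is to compute these $\Hom$-groups by writing down an explicit cofibrant resolution of $S_i$ over $\Gamma_{Q,W}$ and applying $\Hom(-,S_j)$ term-by-term. The only ingredient concerning $S_j$ that I will need is the Yoneda-type identity
$$\Hom_{\sD\Gamma_{Q,W}}(P_k, S_j[n]) = \delta_{k,j}\,\delta_{n,0}\,\bC, \qquad P_k := e_k\Gamma_{Q,W},$$
which holds because $S_j$ is concentrated in cohomological degree $0$ at vertex $j$, and the $P_k$ form a set of compact projective generators of $\sD(\Gamma_{Q,W})$.

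The main construction is a minimal cofibrant resolution of $S_i$, whose graded shape is dictated by the four groups of generators of the Ginzburg dg algebra: idempotents (degree $0$), arrows of $Q$ (degree $0$), reverse arrows $a^\ast$ (degree $-1$), and the loops $t_k$ (degree $-2$). Localizing at vertex $i$ yields a four-term projective resolution
$$P_i[3] \to \bigoplus_{a: i \to j} P_j[2] \to \bigoplus_{a: j \to i} P_j[1] \to P_i \to S_i,$$
whose connecting maps are built from right-multiplication by the arrows $a$, the reverse arrows $a^\ast$, and the loop $t_i$, corrected by contributions from the potential so as to commute with the internal differential of $\Gamma_{Q,W}$ (which on generators reads $d(a^\ast) = \partial_a W$ and $d(t_k) = e_k \sum_a [a,a^\ast] e_k$). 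Applying $\Hom(-,S_j)$ term-wise and invoking the Yoneda identity produces the complex
$$\delta_{i,j}\bC \to \bC^{Q_{j,i}} \to \bC^{Q_{i,j}} \to \delta_{i,j}\bC$$
in cohomological degrees $0,1,2,3$. All induced differentials vanish, because every connecting map of the resolution is a linear combination of nontrivial paths or strictly negative-degree generators and hence factors through the radical of its target projective, which is annihilated by any morphism to a simple. Taking cohomology recovers the four stated dimensions, the vanishing for $n<0$ is automatic, and the vanishing for $n>3$ follows from the resolution having length three.

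The main obstacle will be verifying the existence of the resolution itself: one must show that the proposed $\Gamma_{Q,W}$-linear maps $\phi_k$ admit corrections using $W$ so that they commute with the internal dg differential and satisfy $\phi_k \circ \phi_{k+1} = 0$ on the nose, and that the resulting complex is acyclic outside cohomological degree $0$. A cleaner shortcut is to combine the 3-Calabi--Yau duality $\Hom(S_i, S_j[n]) \cong \Hom(S_j, S_i[3-n])^\ast$ with a direct calculation of only the two independent rows: $n=0$ is immediate from the simplicity of $S_i,S_j$, and $n=1$ can be computed from the short exact sequence $0 \to \mathrm{rad}(P_i) \to P_i \to S_i \to 0$ inside the canonical heart $\sA_{Q,W}$; the rows $n=2,3$ then follow formally from the Calabi--Yau duality, and the length of the resolution still provides the vanishing for $n \notin \{0,1,2,3\}$.
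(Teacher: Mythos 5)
The paper cites this result verbatim from \cite[Lemma 2.15]{KY} and does not reproduce a proof, so there is no in-paper argument to compare against; your proposal is instead a reconstruction of the Keller--Yang argument. Your main route --- building the minimal cofibrant resolution of $S_i$ whose four summands are projectives $P_k$ indexed by the vertex $i$, the arrows into/out of $i$, and $i$ again (reflecting the generators of $\Gamma_{Q,W}$ in degrees $0,0,-1,-2$), then applying $\Hom(-,S_j)$ and invoking minimality to kill all induced differentials --- is correct and is essentially what \cite{KY} do; the indices are consistent with the stated answer, and the length of the resolution gives the vanishing for $n\notin\{0,1,2,3\}$.

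Your ``cleaner shortcut'' is sound in outline but needs one correction: $P_i=e_i\Gamma_{Q,W}$ is a dg module whose cohomology lives in infinitely many nonpositive degrees, so it is \emph{not} an object of the canonical heart $\sA_{Q,W}$, and the short exact sequence $0\to\mathrm{rad}(P_i)\to P_i\to S_i\to 0$ cannot be read inside $\sA_{Q,W}$ as written. The correct replacement is the projective cover of $S_i$ over the Jacobi algebra $J(Q,W)=H^0(\Gamma_{Q,W})$, namely $e_iJ(Q,W)$ (or its pseudo-compact/nilpotent completion when $J(Q,W)$ is infinite-dimensional), and one then uses that $\Hom_\sD(S_i,S_j[1])\cong\Ext^1_{\sA_{Q,W}}(S_i,S_j)$ is computed from the Gabriel quiver of $J(Q,W)$. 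Since the relations $\partial_aW$ lie in the square of the arrow ideal, that Gabriel quiver has the same arrows as $Q$, giving the claimed count. The $3$-Calabi--Yau duality (which the paper records for all $M,N\in\Dfd(\Gamma_{Q,W})$, so applies to the simples) then recovers $n=2,3$ from $n=1,0$, and vanishing for $n<0$ follows from the $t$-structure axioms while vanishing for $n>3$ follows from CY duality combined with vanishing for negative degrees; so this second route does not actually need the resolution at all. Also a minor slip: the $P_k$ are compact generators of $\per(\Gamma_{Q,W})$, not of the whole unbounded derived category $\sD(\Gamma_{Q,W})$, though this does not affect the Yoneda identity you need.
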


The Grothendieck group $K_0(\Dfd(\Gamma_{Q,W}))$ is free on the basis given by $[S_i]$, $i \in I$.
By definition of $\Dfd(\Gamma_{Q,W})$, the Euler pairing on $K_0(\Dfd(\Gamma_{Q,W}))$
\begin{align*}
    \langle M, N \rangle := \sum_{n \in \bZ} (-1)^n \dim \Hom_{\sD_{Q,W}}(M, N[n])
\end{align*}
is well-defined.
By comparing \cref{lem:dim_Ext}, the matrix of the Euler form with respect to the basis $([S_i])_{i \in I}$ is given by the matrix $B_Q$ corresponding to $Q$.
We attach quiver with potentials $(Q^{(t)}, W^{(t)})$ for $t \in \bT_I$ satisfying $(Q^{(t_0)}, W^{(t_0)}) = (Q^0, W^0)$ and $(Q^{(t')}, W^{(t')}) \sim \mu_k(Q^{(t)}, W^{(t)})$ for each edge $t \overbar{k} t'$ in $\bT_I$.
Here $\mu_k(Q,W)$ denotes a mutation for a quiver with potential $(Q,W)$ and $(Q,W) \sim (Q',W')$ means that the quiver with potentials $(Q,W)$ and $(Q',W')$ are right equivalent (see \cite{DWZ}).
Then, we obtain the seed pattern
\begin{align*}
    \bs^{\mathrm{cat}}: t \mapsto \big(K_0(\Dfd(\Gamma_{Q^{(t)},W^{(t)}})) = {\textstyle \bigoplus_{i \in I}\bZ[S_i^{(t)}]},\ B_{Q^{(t)}} \big).
\end{align*}
Here $S_i^{(t)}$ denotes the simple $\Gamma_{Q^{(t)}, W^{(t)}}$-module for $i \in I$.
It is clear that $\bs^{\mathrm{cat}} = \bs$.

On the other hand, the Grothendieck group $K_0(\per(\Gamma_{Q,W}))$ is free on the basis given by $[P_i]$, $i \in I$, where $P_i$ denotes the projective module $e_i \Gamma_{Q,W}$.
Extends the Euler pairing to $\per(\Gamma_{Q,W})$ in the same way, then we have $\langle [P_i], [S_j] \rangle = \delta_{i, j}$ for any $i, j \in I$.
Thus, one can think that $K_0(\per(\Gamma_{Q,W}))$ is a dual lattice of $K_0(\Dfd(\Gamma_{Q,W}))$ via the Euler pairing.

We will consider the mutation loops of this seed pattern $\bs^\mathrm{cat}$.
In order to lift them to an autoequivalence of the derived category of the Ginzburg dg algebras, we have to restrict them as follows:

\begin{defi}\label{def:QP_mut_loop}
A mutation loop $\phi$ of the seed pattern $\bs^\mathrm{cat}$ is a \emph{QP mutation loop} if there is a representation path $\gamma: t \to t'$ such that the quiver with potentials $(Q^{(t)}, W^{(t)})$ and $(Q^{(t')}, W^{(t')})$ corresponding to $t$ and $t'$ respectively are right equivalent.
\end{defi}

From the nondegeneracy of the potential $W^0$ of $Q^0$, any representation path $\gamma: t \to t'$ of a QP mutation loop $\phi$ satisfies the condition $(Q^{(t)}, W^{(t)}) \sim (Q^{(t')}, W^{(t')})$.

We will write $\Gamma^{(t)} = \Gamma_{Q^{(t), W^{(t)}}}$, $\sD^{(t)} = \sD(\Gamma^{(t)})$, $\per^{(t)} = \per(\Gamma^{(t)})$, $\Dfd^{(t)} = \Dfd(\Gamma^{(t)})$ and $\sA^{(t)} = \sA_{Q^{(t)}, W^{(t)}}$ for simplicity.
The signed seed mutations are categorified as follows:

\begin{thm}[{\cite{KY}}]
Let $\bs^{\mathrm{cat}}$ be the seed pattern as above.
For an edge $t \overbar{k} t'$ of $\bT_I$, there exist two derived equivalences
\begin{align*}
    F^*_{k,+}, F^*_{k,-}: \sD^{(t')} \xrightarrow{\sim} \sD^{(t)}
\end{align*}
such that they restricts to subcategories
\begin{align*}
    F^*_{k,\pm}|_{\Dfd} : \Dfd^{(t')} \xrightarrow{\sim} \Dfd^{(t)}, \quad
    F^*_{k,\pm}|_{\per} : \per^{(t')} \xrightarrow{\sim} \per^{(t)}.
\end{align*}
The equivalences send the standard heart $\sA^{(t')}$ to the right and left tilt of $\sA^{(t)}$ at the torsion pairs $(\sS_k,\, \sS_k^\bot)$ and $({}^\bot \sS_k,\, \sS_k)$ of $\sA^{(t)}$ respectively, where $\sS_k = \sS_k^{(t)} := \operatorname{\mathsf{add}}S_k^{(t)}$.
That is,
\begin{align*}
    F^*_{k,+}(\sA^{(t')}) = \sS_k^\bot \star \sS_k[-1]\ \mbox{ and }\ 
    F^*_{k,-}(\sA^{(t')}) = \sS_k[1] \star {}^\bot \sS_k.
\end{align*}
Moreover, they induce signed seed mutations:
\begin{align*}
    [F^*_{k,\epsilon}|_{\Dfd}] &= \widetilde{\mu}_{k,\epsilon}^* : K_0(\sD^{(t')}) \xrightarrow{\sim} K_0(\sD^{(t)}),\\
    [F^*_{k,\epsilon}|_{\per}] &= \widetilde{\mu}_{k,\epsilon}^* : K_0(\per^{(t')}) \xrightarrow{\sim} K_0(\per^{(t)}),
\end{align*}
for $\epsilon \in \{+, -\}$.
Namely, the presentation matrix of $[F^*_{k,\epsilon}|_{\Dfd}]$ (resp. $[F^*_{k,\epsilon}|_{\per}]$) is given by $(E^{(t)}_{k, \epsilon})^\tr$ (resp. $(\check{E}^{(t)}_{k, \epsilon})^\tr$).
\end{thm}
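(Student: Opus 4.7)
The theorem, due to Keller and Yang \cite{KY}, is proved by exhibiting each $F^*_{k,\epsilon}$ as the derived tensor product with an explicit silting bimodule, then verifying the three assertions on a set of generators. The plan is as follows. For each sign $\epsilon \in \{+,-\}$ I would first construct a silting object $T_{k,\epsilon} \in \per^{(t)}$ by a silting mutation of the standard silting $\Gamma^{(t)} = \bigoplus_{i \in I} P_i^{(t)}$ at the summand $P_k^{(t)}$: leave $P_i^{(t)}$ intact for $i \neq k$, and replace $P_k^{(t)}$ by the cone of a minimal left (resp.\ right) $\add(\bigoplus_{i \neq k} P_i^{(t)})$-approximation of $P_k^{(t)}$ for $\epsilon = +$ (resp.\ $\epsilon = -$). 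The multiplicities of the summands in the approximation are prescribed by the corresponding entries of $B^{(t)}$.

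The technical crux, executed in \cite{KY} via a careful cofibrant model, is the quasi-isomorphism
\[ \RHom_{\Gamma^{(t)}}\!\left(T_{k,\epsilon},\, T_{k,\epsilon}\right) \simeq \Gamma^{(t')}, \]
which realises the Derksen--Weyman--Zelevinsky potential mutation of $(Q^{(t)}, W^{(t)})$ at $k$ as a silting mutation on the dg level. Granted this identification, set $F^*_{k,\epsilon} := - \otimes^{\bL}_{\Gamma^{(t')}} T_{k,\epsilon}$; this is a derived equivalence $\sD^{(t')} \xrightarrow{\sim} \sD^{(t)}$, and since $T_{k,\epsilon} \in \per^{(t)}$ it restricts to an equivalence on perfect derived categories. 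Homological smoothness of the Ginzburg dg algebras then forces the further restriction to an equivalence on finite-dimensional derived categories.

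Next I would identify the image $F^*_{k,\epsilon}(\sA^{(t')})$ by computing on simples. For $\epsilon = +$, the explicit form of $T_{k,+}$ yields $F^*_{k,+}(S_k^{(t')}) \cong S_k^{(t)}[-1]$, and for $i \neq k$ an explicit triangle places $F^*_{k,+}(S_i^{(t')})$ inside $\sS_k^\bot \star \sS_k[-1]$. Because $\sA^{(t')}$ is of finite length with the $S_i^{(t')}$ as simples, it is their extension closure, so $F^*_{k,+}(\sA^{(t')})$ is the extension closure of the above images; by construction this coincides with the right tilt $\sS_k^\bot \star \sS_k[-1]$. The case $\epsilon = -$ is symmetric, with $F^*_{k,-}(S_k^{(t')}) \cong S_k^{(t)}[1]$.

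Finally, the Grothendieck group assertion is a matter of reading off the classes of the computed images. In the basis $([S_j^{(t)}])$ the coefficients of $[F^*_{k,+}(S_i^{(t')})]$ reproduce the entries of $E^{(t)}_{k,+}$ from \eqref{eq:E_matrix}, so that $(E^{(t)}_{k,+})^\tr$ is the presentation matrix of $[F^*_{k,+}|_{\Dfd}]$; the dual computation on the basis $([P_j^{(t)}])$ using the defining triangle of the silting mutation returns $\check{E}^{(t)}_{k,+}$ of \eqref{eq:E_check_matrix} for $[F^*_{k,+}|_{\per}]$, consistent with \cref{lem:EF_formulae}(2) since the projective and simple bases are dual under the Euler form. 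The $\epsilon = -$ case is parallel. I expect the only substantive obstacle to be the quasi-isomorphism $\RHom_{\Gamma^{(t)}}(T_{k,\epsilon}, T_{k,\epsilon}) \simeq \Gamma^{(t')}$: matching, term by term, the arrows and relations of the Ginzburg dg algebra for the mutated QP against the endomorphism dg algebra of $T_{k,\epsilon}$ requires the full DWZ mutation formula together with a Koszul-type computation. Everything downstream, namely the restrictions to $\per$ and $\Dfd$, the heart image, and the $K_0$-matrices, reduces to bookkeeping on generators.
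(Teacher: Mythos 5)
This theorem is cited from Keller--Yang (\cite{KY}) and the paper gives no proof of its own, so the right comparison is with that source. Your sketch faithfully reconstructs the Keller--Yang argument: construct the pair of dg bimodules $T_{k,\pm}$ by mutating the standard silting $\Gamma^{(t)}$ at $P_k^{(t)}$ via left/right $\add(\bigoplus_{i\neq k}P_i^{(t)})$-approximations, verify that $\Gamma^{(t')}\to \RHom_{\Gamma^{(t)}}(T_{k,\epsilon},T_{k,\epsilon})$ is a quasi-isomorphism (the DWZ computation), take $F^*_{k,\epsilon}=-\otimes^{\bL}_{\Gamma^{(t')}}T_{k,\epsilon}$, and read off the restrictions, heart image, and $K_0$-matrices on generators, which is exactly the route taken in \cite{KY}.
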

\section{The autoequivalence $F_\phi$ associated to a sign stable mutation loop $\phi$}\label{sec:sign_stab_auto}

\subsection{Sign stability of mutation loops}

First of all, we recall the tropical cluster $\X$-transformation $\mu_\indk: \X_{(t)}(\bR^\trop) \to \X_{(t')}(\bR^\trop)$ associated with an edge $t \overbar{k} t'$ in $\bT_I$.

\begin{lem}[{\cite[Lemma 3.1]{IK19}}]\label{l:x-cluster signed}
Fix a point $w \in \X_{(t)}(\bR^\trop)$. 
Then the tropical cluster $\X$-transformation $\mu_\indk: \X_{(t)}(\bR^\trop) \to \X_{(t')}(\bR^\trop)$ is given by
\begin{align}\label{eq:sign x-cluster}
    x^{(t')}_\indi(\mu_\indk(w)) =
\begin{cases}
    -x^{(t)}_\indk(w) & \mbox{if $\indi=\indk$}, \\
    x^{(t)}_\indi(w)+[\sgn(x^{(t)}_\indk(w))b^{(t)}_{\indi\indk}]_+x^{(t)}_\indk(w) & \mbox{if $\indi \neq \indk$}.
\end{cases}
\end{align}
\end{lem}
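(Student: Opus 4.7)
The plan is to unwind the definition $\mu_k^x = \widetilde{\mu}_{k,\epsilon} \circ \mu_{k,\epsilon}^\#$ on $x$-coordinates for any auxiliary sign $\epsilon \in \{+,-\}$ and check that the resulting expression collapses to the $\epsilon$-independent formula in terms of $\sgn(x_k^{(t)}(w))$ claimed in the lemma.

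First, I would fix $\epsilon \in \{+,-\}$ and compute the intermediate point $w' := \mu_{k,\epsilon}^\#(w) \in \X_{(t)}(\bR^\trop)$ from its definition:
\[
x_i^{(t)}(w') = x_i^{(t)}(w) - b_{ik}^{(t)}\, \min\{0,\, \epsilon\, x_k^{(t)}(w)\}.
\]
In particular $x_k^{(t)}(w') = x_k^{(t)}(w)$, since $b_{kk}^{(t)} = 0$.

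Second, I would translate the linear isomorphism $\widetilde{\mu}_{k,\epsilon}^*$ into the coordinate action of $\widetilde{\mu}_{k,\epsilon}: \X_{(t)}(\bR^\trop) \to \X_{(t')}(\bR^\trop)$. Using the explicit action of $\widetilde{\mu}_{k,\epsilon}^*$ on the dual basis $(f'_i)$ recorded in the text, \cref{lem:EF_formulae} (which says $\check{E}^{(t)}_{k,\epsilon}$ is an involution), and the skew-symmetry $b_{ki}^{(t)} = -b_{ik}^{(t)}$, I find that for any $v \in \X_{(t)}(\bR^\trop)$
\begin{align*}
x_k^{(t')}(\widetilde{\mu}_{k,\epsilon}(v)) &= -x_k^{(t)}(v), \\
x_i^{(t')}(\widetilde{\mu}_{k,\epsilon}(v)) &= x_i^{(t)}(v) + [\epsilon\, b_{ik}^{(t)}]_+\, x_k^{(t)}(v) \quad (i \neq k).
\end{align*}
Setting $v = w'$, the first line immediately gives $x_k^{(t')}(\mu_k(w)) = -x_k^{(t)}(w)$, which is the first case of the lemma. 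Combining the second line with the formula for $w'$ yields, for $i \neq k$,
\[
x_i^{(t')}(\mu_k(w)) = x_i^{(t)}(w) - b_{ik}^{(t)}\, \min\{0,\, \epsilon\, x_k^{(t)}(w)\} + [\epsilon\, b_{ik}^{(t)}]_+\, x_k^{(t)}(w).
\]

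The last step is to verify the elementary identity
\[
-b\, \min\{0,\, \epsilon a\} + [\epsilon b]_+\, a \;=\; [\sgn(a)\, b]_+\, a \qquad (a,b \in \bR,\ \epsilon \in \{+,-\}),
\]
with the convention $\sgn(0) = 0$; applied with $a = x_k^{(t)}(w)$ and $b = b_{ik}^{(t)}$ this rewrites the previous display as exactly the second case of the lemma. The identity itself is a short case analysis on the signs of $a$ and of $\epsilon$, using $[b]_+ - [-b]_+ = b$ and $\min\{0, c\} = -[-c]_+$. The only minor obstacle is the bookkeeping of directions and of the transposes/inverses in the passage between $\widetilde{\mu}_{k,\epsilon}^*$ on $M^{(t')}$ and $\widetilde{\mu}_{k,\epsilon}$ on $\X_{(t)}$, for which \cref{lem:EF_formulae} is exactly the right tool; as a sanity check, the cancellation of the $\epsilon$-dependence in the final expression reconfirms that the composition $\widetilde{\mu}_{k,\epsilon} \circ \mu_{k,\epsilon}^\#$ does not depend on the auxiliary sign $\epsilon$.
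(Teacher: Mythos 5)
The paper does not prove this lemma; it simply cites it from \cite[Lemma~3.1]{IK19} and provides no argument of its own, so there is nothing to compare against. That said, your blind proof is correct and is the natural argument: you unwind $\mu_k^x=\widetilde{\mu}_{k,\epsilon}\circ\mu_{k,\epsilon}^{\#}$, compute the intermediate point, pass $\widetilde{\mu}_{k,\epsilon}^*$ on $M^{(t')}\to M^{(t)}$ to its coordinate action on $\cX_{(t)}\to\cX_{(t')}$ using the involution property from \cref{lem:EF_formulae}, and then verify the scalar identity $-b\min\{0,\epsilon a\}+[\epsilon b]_+a=[\sgn(a)b]_+a$ by cases. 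I checked both the coordinate formula for $\widetilde{\mu}_{k,\epsilon}$ (which can also be obtained directly as the transpose of $\widetilde{\mu}^*_{k,\epsilon}:N^{(t')}\to N^{(t)}$, giving the matrix $E^{(t)}_{k,\epsilon}$; the two routes agree precisely because $\check{E}^\tr=E^{-1}=E$) and the final identity, including the degenerate case $a=0$, and both hold. The only thing worth flagging is cosmetic: since you invoke \cref{lem:EF_formulae} to handle the direction/duality bookkeeping, it may be cleaner to state explicitly that the presentation matrix of $\widetilde{\mu}_{k,\epsilon}:\cX_{(t)}\to\cX_{(t')}$ in the $f$-bases is $E^{(t)}_{k,\epsilon}$, from which the two displayed lines follow immediately from~\eqref{eq:E_matrix}; but your argument as written is sound.
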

Here, $\sgn(a)$ denotes the sign of $a \in \bR$:
\[
\sgn(a):=
\begin{cases}
    + & \mbox{ if } a>0,\\
    0 & \mbox{ if } a=0,\\
    - & \mbox{ if } a<0.
\end{cases}
\]
This lemma says that the tropical cluster $\X$-transformation $\mu_\indk: \X_{(t)}(\bR^\trop) \to \X_{(t')}(\bR^\trop)$ is linear on each of the half-spaces
\begin{align*}
    \cH_{\indk,\epsilon}^{x,(t)}:= \{ w \in \X_{(t)}(\bR^\trop) \mid \epsilon x^{(t)}_\indk(w) \geq 0 \}
\end{align*}
for $\indk \in I$, $\epsilon \in \{+,-\}$ and $v \in \bT_I$.

For an edge path $\gamma: t_0 \overbar{k_0} t_1 \overbar{k_1} \cdots \overbar{k_{h-1}} t_h$ in $\bT_I$ and $i=1,\dots,m$, let $\gamma_{\leq i}: t_0 \xrightarrow{(k_0,\dots,k_{i-1})} t_i$ be the sub-path of $\gamma$ from $t_0$ to $t_i$. Let $\gamma_{\leq 0}$ be the constant path at $t_0$. 


\begin{dfn}[sign of a path]\label{d:sign}
Let the notation as above, and fix a point $w \in \X_{(t_0)}(\bR^\trop)$.
\item The \emph{sign} of $\gamma$ at $w$ is the sequence $\boldsymbol{\epsilon}_\gamma(w)=(\epsilon_0,\dots,\epsilon_{h-1}) \in \{+,0,-\}^h$ of signs defined by
\[\epsilon_{\nu} := \sgn(x^{(t_{\nu})}_{k_{\nu}}(\mu_{\gamma_{\leq {\nu}}} (w)))\]
for $\nu=0,\dots,h-1$.
\end{dfn}

The next lemma expresses the heart of \cref{d:sign}.

\begin{lem}\label{lem:pres_mat_x}
Let $\boldsymbol{\epsilon}=(\epsilon_0,\dots,\epsilon_{h-1})$ be the sign of a path $\gamma:t \to t'$ at $w \in \cX_{(t_0)}(\bR^\trop)$. If it is strict, namely $\boldsymbol{\epsilon} \in \{+,-\}^h$, then the cluster $\X$-transformation $\mu_\gamma$ is differentiable at $w$, and the presentation matrix is given by $P \cdot E_{\gamma,\bep}$, where $P$ is a presentation matrix of the permutation of the seed isomorphism between $t'$ and $t$, and $E_{\gamma, \bep} :=E_{k_{h-1},\epsilon_{h-1}}^{(t_{h-1})}\dots E_{k_1,\epsilon_1}^{(t_1)} E_{k_0,\epsilon_0}^{(t_0)}$.
\end{lem}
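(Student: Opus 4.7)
The plan is to apply the chain rule to the composition of elementary transformations making up $\mu_\gamma$, using \cref{l:x-cluster signed} to identify each local derivative with the matrix $E^{(t_\nu)}_{k_\nu, \epsilon_\nu}$.

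First, I would read off from the formula \eqref{eq:sign x-cluster} that each elementary transformation $\mu_{k_\nu}: \cX_{(t_\nu)}(\bR^\trop) \to \cX_{(t_{\nu+1})}(\bR^\trop)$, restricted to the half-space $\cH_{k_\nu, \epsilon}^{x, (t_\nu)}$ (for $\epsilon \in \{+,-\}$), is linear and has presentation matrix exactly $E^{(t_\nu)}_{k_\nu, \epsilon}$ with respect to the tropical seed $\cX$-coordinates: the $k_\nu$-th coordinate is negated, and for $i \neq k_\nu$ one adds $[\epsilon b^{(t_\nu)}_{i k_\nu}]_+ x^{(t_\nu)}_{k_\nu}$ to $x^{(t_\nu)}_i$, which matches exactly \eqref{eq:E_matrix}.

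Next, the strictness assumption $\bep \in \{+,-\}^h$ guarantees that for each $\nu$, the intermediate point $\mu_{\gamma_{\leq \nu}}(w)$ lies in the \emph{interior} of $\cH_{k_\nu, \epsilon_\nu}^{x, (t_\nu)}$ (since the $k_\nu$-coordinate is nonzero), so $\mu_{k_\nu}$ is differentiable there with derivative $E^{(t_\nu)}_{k_\nu, \epsilon_\nu}$. Iteratively applying the chain rule to the composition $\mu_\gamma = \mu_{k_{h-1}} \circ \cdots \circ \mu_{k_0}$ yields
\[ (d\mu_\gamma)_w = E^{(t_{h-1})}_{k_{h-1}, \epsilon_{h-1}} \cdots E^{(t_1)}_{k_1, \epsilon_1} E^{(t_0)}_{k_0, \epsilon_0} = E_{\gamma, \bep}. \]

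Finally, the seed isomorphism between $t'$ and $t$ given by a permutation $\sigma \in \fS_I$ induces, via \eqref{eq:seed isom}, a linear identification $\cX_{(t')}(\bR^\trop) \xrightarrow{\sim} \cX_{(t)}(\bR^\trop)$ whose presentation matrix with respect to the seed bases is the permutation matrix $P$ associated with $\sigma$. Composing with $\mu_\gamma$ gives the claimed presentation matrix $P \cdot E_{\gamma, \bep}$ of the resulting map $\cX_{(t)}(\bR^\trop) \to \cX_{(t)}(\bR^\trop)$. The only real obstacle is bookkeeping about matrix ordering (the innermost factor corresponds to the first mutation $\mu_{k_0}$) and the convention for column versus row vectors; these are settled by directly matching \eqref{eq:sign x-cluster} against \eqref{eq:E_matrix}.
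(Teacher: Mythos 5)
Your proof is correct and follows exactly the approach the paper intends (the paper itself states the lemma without proof, treating it as a direct consequence of \cref{l:x-cluster signed} and the definitions). You correctly observe that \eqref{eq:sign x-cluster} linearizes on the half-space $\cH_{k_\nu,\epsilon_\nu}^{x,(t_\nu)}$ with matrix matching \eqref{eq:E_matrix}, that strictness of the sign places each intermediate point $\mu_{\gamma_{\le\nu}}(w)$ in the open half-space (hence an open linearity neighbourhood exists, giving genuine differentiability), and that the chain rule then produces $E_{\gamma,\bep}$ with the innermost factor corresponding to the first flip; the permutation matrix $P$ of the seed isomorphism is then appended on the left to produce the self-map's presentation matrix $P\cdot E_{\gamma,\bep}$.
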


\begin{conj}\label{p:spec_same}
For any point $w \in \X_{(t_0)}(\bR^\trop)$ and a path $\gamma$ which represents a mutation loop, the characteristic polynomials of the matrices $E_{\gamma, \boldsymbol{\epsilon}_\gamma(w)}$ and $\check{E}_{\gamma, \boldsymbol{\epsilon}_\gamma(w)}$ are the same up to an overall sign (\emph{cf.}, \cref{conv:check}).
In particular, the spectral radii of these matrices are the same.
\end{conj}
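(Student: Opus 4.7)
The plan is to reduce the conjecture to a palindromicity statement about the characteristic polynomial of $E_{\gamma,\bep}$, and then derive that palindromicity from a symplectic-like property of $E_{\gamma,\bep}$ inherent to mutation loops. First, from \cref{lem:EF_formulae} one checks the single-edge identity $(E^{(t)}_{k,\epsilon})^\tr = \check{E}^{(t)}_{k,\epsilon}$; combining this with the involutive property $E^2 = I$ (also from \cref{lem:EF_formulae}) gives
\[
\check{E}_{\gamma,\bep} = \check{E}_{h-1}\cdots\check{E}_0 = E_{h-1}^\tr\cdots E_0^\tr = (E_0\cdots E_{h-1})^\tr = (E_{\gamma,\bep}^{-1})^\tr = (E_{\gamma,\bep}^\tr)^{-1},
\]
so $\check{E}_{\gamma,\bep}$ is precisely the matrix $\check{}$-operation (\cref{conv:check}) applied to $E_{\gamma,\bep}$. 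Using the standard identity $p_{\check{A}}(t) = (-t)^N(\det A)^{-1}p_A(1/t)$, the claim $p_{E_{\gamma,\bep}} = \pm p_{\check{E}_{\gamma,\bep}}$ becomes equivalent to the statement that $p_{E_{\gamma,\bep}}$ is a reciprocal polynomial up to overall sign, i.e., the eigenvalues of $E_{\gamma,\bep}$ appear in reciprocal pairs $\{\lambda,\lambda^{-1}\}$.

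To establish this reciprocality, I would iterate the standard matrix mutation formula
\[
B^{(t')} = E^{(t)}_{k,\epsilon}\cdot B^{(t)}\cdot (E^{(t)}_{k,\epsilon})^\tr,
\]
which holds for either single-step sign $\epsilon$ (reflecting the compatibility of signed seed mutations with the ensemble map $p^*\colon N^{(t)}\to M^{(t)}$) along the path $\gamma$, obtaining
\[
E_{\gamma,\bep}\cdot B^{(t_0)}\cdot E_{\gamma,\bep}^\tr = B^{(t_h)}.
\]
Since $\gamma$ is a representation path of a mutation loop, $B^{(t_h)} = P\cdot B^{(t_0)}\cdot P^\tr$ where $P$ is the permutation matrix realizing the seed isomorphism, so
\[
(P^{-1}E_{\gamma,\bep})\cdot B^{(t_0)}\cdot (P^{-1}E_{\gamma,\bep})^\tr = B^{(t_0)},
\]
exhibiting $P^{-1}E_{\gamma,\bep}$ as a transformation preserving the skew-symmetric form $B^{(t_0)}$. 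When $B^{(t_0)}$ is non-degenerate, $P^{-1}E_{\gamma,\bep}$ lies in the symplectic group of $B^{(t_0)}$, and hence its characteristic polynomial is automatically reciprocal.

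The main obstacle is then to transfer the reciprocality from $p_{P^{-1}E_{\gamma,\bep}}$ to $p_{E_{\gamma,\bep}}$ itself, since $P$ need not commute with $E_{\gamma,\bep}$. One natural route is to pass to a power $\phi^m$ with $P^m = I$: a representation path of $\phi^m$ yields a matrix preserving $B^{(t_0)}$ without permutation correction, so its characteristic polynomial is reciprocal, and one attempts to descend that reciprocality to $p_{E_{\gamma,\bep}}$ by exploiting the integrality of coefficients (e.g.\ factoring out cyclotomic contributions). A secondary obstacle is the potentially degenerate case of $B^{(t_0)}$, which would require decomposing $N^{(t_0)}$ into the non-degenerate quotient by $\ker p^*$, on which the symplectic argument applies directly, and the kernel itself, where one expects $E_{\gamma,\bep}$ to contribute only reciprocal eigenvalues corresponding to the ``frozen'' part of the exchange data, but where a rigorous verification requires further analysis of the cluster-combinatorial structure.
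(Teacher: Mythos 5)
The statement you are attacking is formulated in the paper as a \emph{conjecture} (\cref{p:spec_same}) and is not proved there; it is only invoked as an explicit hypothesis in \cref{thm:cat_entropy_Dfd} and \cref{thm:cat_entropy_per}. So there is no proof in the paper to compare your argument against, and your write-up has to be judged on its own as an attempt at an open problem.

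On those terms, the reduction to (anti-)palindromicity of $p_{E_{\gamma,\bep}}$ is correct, and the core structural observation is sound: iterating the compatibility identity $B^{(t')} = E^{(t)}_{k,\epsilon}\, B^{(t)}\, (E^{(t)}_{k,\epsilon})^\tr$ along $\gamma$, and using the seed isomorphism $B^{(t_h)} = P\, B^{(t_0)}\, P^\tr$, does give
$(P^{-1}E_{\gamma,\bep})\, B^{(t_0)}\, (P^{-1}E_{\gamma,\bep})^\tr = B^{(t_0)}$,
and if $B^{(t_0)}$ is nondegenerate this forces $p_{P^{-1}E_{\gamma,\bep}}$ to be reciprocal by the standard symplectic argument. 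But the two obstacles you flag are both genuine and are left open, so this is not a proof. First, the conjecture concerns $E_{\gamma,\bep}$ itself (as defined in \cref{lem:pres_mat_x}, without the permutation), and there is no general mechanism to transport palindromicity across $P$: replacing $\phi$ by $\phi^m$ with $P^m=I$ only gives reciprocality of $p_{A^m}$, and eigenvalues of $A$ are determined by those of $A^m$ only up to $m$-th roots of unity, so this does \emph{not} descend to $p_A$; the appeal to ``integrality of coefficients'' and ``cyclotomic contributions'' is a hope, not an argument. Second, and more serious for this paper's running examples, $B^{(t_0)}$ is typically degenerate: for the quiver of an ideal triangulation of a punctured surface $|I|=6g-6+3h$ is odd whenever $h$ is odd (e.g.\ the once-punctured torus), forcing $\det B^{(t_0)}=0$, and then $M\,B^{(t_0)}\,M^\tr = B^{(t_0)}$ places no constraint at all on the restriction of $M$ to $\ker B^{(t_0)}$. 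Your sentence that this kernel ``contributes only reciprocal eigenvalues'' is precisely the hard content of the conjecture and is not justified in your proposal. In sum, you have isolated useful structure, but what you actually prove is only the special case where $B^{(t_0)}$ is nondegenerate and the seed isomorphism is the identity.
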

Note that for an $N \times N$-matrix $E$ with $\det E=\pm 1$, the characteristic polynomials of the matrices $E$ and $\check{E}$ are the same up to an overall sign if and only if the characteristic polynomial $P_E(\nu)$ of $E$ is (anti-)palindromic: $P_E(\nu^{-1}) = \pm \nu^{-N}P_E(\nu)$.

We put the cone
\[
    \cC^+_{(t)}:= \left\{ w \in \X_{(t)}(\bR^\trop) \mid x_i^{(t)}(w) \geq 0 \mbox{ for $i =1,\dots,N$} \right\}
\]
for each $t \in \bT_I$.
By \cite[Lemma 3.8]{IK19}, the sign of a path $\gamma: t \to t'$ at the interior of $\cC^+_{(t)}$ is constant.
We refer to it as the \emph{tropical sign} $\bep^\trop_\gamma$ of $\gamma$.

\begin{dfn}[sign stability]\label{d:sign stability}
Let $\gamma$ be a path as above and suppose that $\phi:=[\gamma]_{\bs}$ is a mutation loop. Let $\Omega \subset \X_{(t_0)}(\bR^\trop)$ be a subset which is invariant under the rescaling action of $\bR_{> 0}$. 
Then we say that $\gamma$ is \emph{sign-stable} on $\Omega$ if there exists a sequence $\boldsymbol{\epsilon}^\stab_{\gamma,\Omega} \in \{+,-\}^h$ of strict signs such that for each $w \in \Omega \setminus \{0\}$, and there exists an nonnegative integer $s \in \bZ_{\geq 0}$ such that   \[\boldsymbol{\epsilon}_\gamma(\phi^n(w)) = \boldsymbol{\epsilon}^\stab_{\gamma, \Omega} \]
for all $n \geq s$. 
We call $\boldsymbol{\epsilon}_{\gamma,\Omega}^\stab$ the \emph{stable sign} of $\gamma$ on $\Omega$.
\end{dfn}

For a mutation loop $\phi$ and a vertex $v_0 \in \bT_I$, its \emph{presentation matrix} at a differentiable point $w \in  \X_{(t_0)}(\bR^\trop)$ is the presentation matrix $E_\phi^{(t_0)}(w) \in GL_N(\bZ)$ of the tangent map 
\begin{align*}
    (d\phi)_w: T_w \X_{(t_0)}(\bR^\trop) \to T_{\phi(w)} \X_{(t_0)}(\bR^\trop)
\end{align*}
with respect to the basis $(f_i^{(t_0)})_{i \in I}$ of $M^{(t_0)} \otimes_\bZ \bR = \cX_{(t_0)}(\bR^\trop)$.
Note that a choice of a representation path $\gamma: t_0 \to t$ of $\phi$ gives a factorization $E_\phi^{(t_0)}(w)=E_{\gamma,\boldsymbol{\epsilon}_\gamma(w)}$ into elementary matrices $E_{k_{i(\nu)},\epsilon_\nu}^{(t_{i(\nu)})}$ as in \cref{lem:pres_mat_x}.
Thus the sign stability implies that the presentation matrix of $\phi$ at each point $w \in \Omega$ stabilizes:

\begin{cor}\label{cor:asymptotic linearity}
Suppose that $\gamma$ is a path which defines a mutation loop $\phi=[\gamma]_{\bs}$, and it is sign-stable on $\Omega$. Then there exists an integral $N \times N$-matrix $E_{\phi,\Omega}^{(t_0)} \in GL_N(\bZ)$ such that for each $w \in \Omega$, there exists an integer $n_0 \geq 0$ such that $E_\phi^{(t_0)}(\phi^n(w)) = E_{\phi,\Omega}^{(t_0)}$ for all $n \geq n_0$.
\end{cor}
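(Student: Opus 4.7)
The corollary is essentially a direct repackaging of the definition of sign stability using the explicit factorization of the presentation matrix into elementary matrices, so the plan is short. First, I would simply define the candidate matrix by
\[
E_{\phi, \Omega}^{(t_0)} := E_{\gamma,\, \bep^\stab_{\gamma, \Omega}},
\]
the product of elementary matrices $E^{(t_\nu)}_{k_\nu, \epsilon^\stab_\nu}$ associated with the stable sign supplied by \cref{d:sign stability} (matching whatever permutation convention is used in defining $E^{(t_0)}_\phi(w)$, if any).

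Next, fix $w \in \Omega$. By the definition of sign stability, there exists an integer $n_0 \geq 0$ such that $\bep_\gamma(\phi^n(w)) = \bep^\stab_{\gamma, \Omega}$ for every $n \geq n_0$. Since $\bep^\stab_{\gamma, \Omega}$ is strict by hypothesis, \cref{lem:pres_mat_x} applies and guarantees that the cluster $\cX$-transformation $\mu_\gamma$ (and hence $\phi$) is differentiable at each point $\phi^n(w)$, so that the presentation matrix $E^{(t_0)}_\phi(\phi^n(w))$ is well-defined there. The factorization stated right after \cref{lem:pres_mat_x} then yields
\[
E^{(t_0)}_\phi(\phi^n(w)) \;=\; E_{\gamma,\, \bep_\gamma(\phi^n(w))} \;=\; E_{\gamma,\, \bep^\stab_{\gamma, \Omega}} \;=\; E^{(t_0)}_{\phi, \Omega}
\]
for all $n \geq n_0$, which is the desired conclusion.

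Finally, integrality and invertibility of $E^{(t_0)}_{\phi, \Omega}$ follow from \cref{lem:EF_formulae}: each elementary matrix $E^{(t_\nu)}_{k_\nu, \epsilon_\nu}$ is self-inverse with integer entries, so their product lies in $GL_N(\bZ)$. There is no real obstacle here—the only point of even minor care is to notice that the strictness of $\bep^\stab_{\gamma, \Omega}$ is exactly what is required to invoke \cref{lem:pres_mat_x} and thereby ensure that the presentation matrix is defined at the iterates $\phi^n(w)$ for $n \geq n_0$.
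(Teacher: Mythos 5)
Your proposal is correct and matches the paper's intended argument exactly: the paper states this corollary without a separate proof, relying precisely on the factorization $E_\phi^{(t_0)}(w) = E_{\gamma,\boldsymbol{\epsilon}_\gamma(w)}$ noted immediately before the corollary, combined with the definition of sign stability, which is just what you unpack. Your parenthetical caveat about the permutation factor $P$ in \cref{lem:pres_mat_x} is also the right thing to flag, since the paper's notation $E_{\gamma,\boldsymbol{\epsilon}_\gamma(w)}$ in the factorization remark implicitly absorbs that permutation.
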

We call $E^{(t_0)}_{\phi,\Omega}$ the \emph{stable presentation matrix} of $\phi$ with respect to the vertex $t_0$. Notice that this matrix only depends on the mutation loop $\phi$ and the vertex $t_0$, and defined when $\phi$ admits a sign-stable representation path starting from $t_0$.

We say that a path $\gamma: t_0 \xrightarrow{\mathbf{k}=(k_0,\dots,k_{h-1})} t$ in $\bT_I$ is \emph{fully-mutating} if
\begin{align*}
    \{k_0,\dots,k_{h-1}\}=I.
\end{align*}

For a path $\gamma: t_0 \to t$ in $\bT_I$ which represents a horizontal mutation loop $\phi$.
For a sign $\bep \in \{+,-\}^h$ we define the cone $\cC^{\bep}_{\gamma}$ as
\[\cC^{\bep}_{\gamma} := \mathrm{rel.cl}\{ w \in \cX_{(t_0)}(\bR^\trop) \mid \bep_\gamma(w) = \bep\} \subset \cX_{(t_0)}(\bR^\trop),\]
where $\mathrm{rel.cl}$ denotes the relative closure.


\begin{lem}\label{lem:str_conv}
If $\gamma$ is fully-mutating, the cone $\cC^{\bep}_{\gamma}$ is rationally polyhedral and strictly convex.
\end{lem}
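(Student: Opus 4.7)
My plan is to cut out $\cC^{\bep}_\gamma$ explicitly by finitely many rational linear inequalities, and then to use the fully-mutating hypothesis to show that the defining functionals span the dual space. On the open locus $\{w \in \cX_{(t_0)}(\bR^\trop) \mid \bep_\gamma(w) = \bep\}$, each subpath map $\mu_{\gamma_{\leq \nu}}$ is linear (this is \cref{lem:pres_mat_x} applied to the subpath $\gamma_{\leq\nu}$, whose sign at $w$ is the truncation $\bep_{<\nu}:=(\epsilon_0,\dots,\epsilon_{\nu-1})$). Consequently the sign condition at step $\nu$ becomes a strict rational inequality $\epsilon_\nu L_\nu(w) > 0$, where
\[
    L_\nu(w) \,:=\, x_{k_\nu}^{(t_\nu)}(\mu_{\gamma_{\leq \nu}}(w))
\]
is a rational linear functional on $\cX_{(t_0)}(\bR^\trop)$. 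Taking the relative closure gives
\[
    \cC^{\bep}_\gamma \,=\, \bigcap_{\nu=0}^{h-1}\{w \mid \epsilon_\nu L_\nu(w) \geq 0\},
\]
which is rational polyhedral.

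For strict convexity, the identity $\cC^{\bep}_\gamma \cap (-\cC^{\bep}_\gamma) = \bigcap_{\nu} \ker L_\nu$ reduces the claim to showing that $\{L_\nu\}_{\nu=0}^{h-1}$ collectively span the dual space $N^{(t_0)}\otimes \bR$. For each $k \in I$ set $\nu(k) := \min\{\nu \mid k_\nu = k\}$, well-defined because $\gamma$ is fully-mutating. The heart of the argument is an induction on $\nu$ proving the following claim: whenever $\nu(k) \geq \nu$, the coefficient of $x_k^{(t_0)}$ in the expansion of $x_i^{(t_\nu)}(\mu_{\gamma_{\leq \nu}}(w))$ in the basis $(x_j^{(t_0)}(w))_{j \in I}$ equals $\delta_{ik}$. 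The inductive step is an immediate consequence of \cref{l:x-cluster signed}: at step $\nu$, either $i = k_\nu$ and the coordinate is negated (but then $i \neq k$, preserving the coefficient $0$), or $i \neq k_\nu$ and $x_i^{(t_{\nu+1})}$ picks up a multiple of $x_{k_\nu}^{(t_\nu)}$, whose $x_k^{(t_0)}$-coefficient vanishes by the inductive hypothesis applied to $k_\nu \neq k$.

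Taking $\nu = \nu(k)$ and $i = k$ in the claim yields
\[
    L_{\nu(k)}(w) \,=\, x_k^{(t_0)}(w) \,+\, \sum_{j\,:\,\nu(j)<\nu(k)} c_{k,j}\, x_j^{(t_0)}(w)
\]
for some $c_{k,j} \in \bQ$, because all coefficients of $x_j^{(t_0)}$ with $\nu(j) > \nu(k)$ vanish by the claim. Ordering the indices of $I$ in the order they are first mutated along $\gamma$, the $|I| \times |I|$ matrix with rows $(L_{\nu(k)})_{k \in I}$ in the basis $(x_j^{(t_0)})_{j \in I}$ is lower unitriangular and hence invertible, so the subfamily $\{L_{\nu(k)}\}_{k \in I}$ already spans the dual.

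The main obstacle is the bookkeeping in the inductive claim: one must carefully track how the coefficient of $x_k^{(t_0)}$ propagates through the iterated tropical $\cX$-transformations until the first mutation at $k$ occurs, and the sign-dependent coefficient $[\epsilon_\nu b_{i,k_\nu}^{(t_\nu)}]_+$ from \cref{l:x-cluster signed} must be handled uniformly in $w$. The fully-mutating hypothesis enters precisely here: it guarantees that each $k \in I$ gives rise to a functional $L_{\nu(k)}$, producing the required $|I|$ linearly independent functionals for strict convexity.
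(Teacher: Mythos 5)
Your proof is correct, and the key ideas are all in place.

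The reduction of strict convexity to the rank of the linear functionals $\{L_\nu\}$, via $\cC^\bep_\gamma \cap (-\cC^\bep_\gamma) = \bigcap_\nu \ker L_\nu$, is the right move, and the inductive claim that drives the triangularity is stated correctly and proved correctly. Let me spell out why the inductive step really does close: in the case $i = k_\nu$, the new coordinate is $-x_{k_\nu}^{(t_\nu)}$, and the $x_k^{(t_0)}$-coefficient of $x_{k_\nu}^{(t_\nu)}$ is $\delta_{k_\nu,k} = 0$ by the induction hypothesis with $i = k_\nu$ (applicable since $\nu(k) \geq \nu+1 > \nu$ and $k_\nu \neq k$), so the new coefficient is $0 = \delta_{k_\nu,k}$. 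In the case $i \neq k_\nu$, the increment $[\epsilon_\nu b_{ik_\nu}^{(t_\nu)}]_+ x_{k_\nu}^{(t_\nu)}$ contributes $0$ to the $x_k^{(t_0)}$-coefficient for the same reason. Thus the coefficient of $x_k^{(t_0)}$ in $x_i^{(t_{\nu+1})}$ is $\delta_{ik}$. Ordering $I$ by $k \mapsto \nu(k)$ (the order of first appearance, defined thanks to the fully-mutating hypothesis), the matrix of $(L_{\nu(k)})_{k \in I}$ against $(x_j^{(t_0)})_{j \in I}$ is lower unitriangular, hence invertible, so the functionals span and the cone is strictly convex. Rational polyhedrality follows since the $L_\nu$ are integral (each entry is a product/sum of nonnegative integers of the form $[\epsilon b]_+$). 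A small remark that would tighten the write-up: the identity $\cC^\bep_\gamma = \bigcap_\nu \{\epsilon_\nu L_\nu \geq 0\}$ should be justified in both directions, and the easy degenerate case $\{w \mid \bep_\gamma(w) = \bep\} = \emptyset$ (where $\cC^\bep_\gamma = \emptyset$, trivially strictly convex) deserves a word. The forward inclusion is immediate; for the reverse, one checks inductively that $\epsilon_\nu L_\nu(w) > 0$ for all $\nu$ forces $\bep_\gamma(w) = \bep$, so the open locus equals $\interior\,\bigcap_\nu\{\epsilon_\nu L_\nu \geq 0\}$ and taking closures gives equality. None of this affects the substance; the proof is sound.
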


An $\bR_{\geq 0}$-invariant set $\Omega \subset \cX_{(t_0)}(\bR^\trop) \cong \cX_\bs(\bR^\trop)$ is said to be \emph{tame} if there exists $t \in \bT_I$ such that $\Omega \cap \interior \cC^+_{(t)} \neq \emptyset$.
It is clear that the cone $\cC^+_{(t)}$ is tame.

The following is a direct consequence of \cref{lem:str_conv}.

\begin{thm}[Perron--Frobenius property, {\cite{IK19}}]\label{thm:Perron-Frobenius}
Suppose $\gamma$ is a path as above which represents a mutation loop $\phi=[\gamma]_{\bs}$, and which is sign-stable on a tame subset $\cL$. 
Then the spectral radius of $E^{(t_0)}_{\phi,\cL}$ is attained by a positive eigenvalue $\lambda^{(t_0)}_{\phi,\cL}$, and we have $\lambda^{(t_0)}_{\phi,\cL} \geq 1$.
Moreover if either $\gamma$ is fully-mutating or $\lambda^{(t_0)}_{\phi,\cL} >1$, then one of the corresponding eigenvectors is given by the coordinate vector $\boldsymbol{x}(w_{\phi,\cL})\in \bR^I$ for some $w_{\phi,\cL} \in \cC_\gamma^\stab \setminus \{0\}$.
\end{thm}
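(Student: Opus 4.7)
The plan is to reduce the statement to a Perron–Frobenius type result for a linear map preserving a closed convex cone. The key object is
\[
\cC^\stab_\gamma := \cC^{\bep^\stab_{\gamma,\cL}}_\gamma = \mathrm{rel.cl}\{w \in \cX_{(t_0)}(\bR^\trop) \mid \bep_\gamma(w) = \bep^\stab_{\gamma,\cL}\},
\]
on whose interior the tropical $\cX$-transformation $\mu_\gamma$, and hence $\phi$, is linear with matrix $E^{(t_0)}_{\phi,\cL}$ by \cref{lem:pres_mat_x} and \cref{cor:asymptotic linearity}. Since $\cL$ is tame, $\cL \cap \interior \cC^+_{(t)} \neq \emptyset$ for some $t$, and by sign stability a high enough $\phi$-iterate of any such point realizes $\bep^\stab$; thus $\cC^\stab_\gamma$ has nonempty interior.

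The first step is to check that $E^{(t_0)}_{\phi,\cL}$ preserves $\cC^\stab_\gamma$. If $w \in \relint \cC^\stab_\gamma$, then $\bep_\gamma(w) = \bep^\stab$, and applying sign stability to the orbit $\{\phi^n(w)\}_{n \geq 1}$ (which still terminates in stable sign since it is a tail of the orbit of $w$) yields $\bep_\gamma(\phi(w)) = \bep^\stab$, i.e.\ $\phi(w) \in \cC^\stab_\gamma$. Continuity of $E^{(t_0)}_{\phi,\cL}$ extends this inclusion to the closure.

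Next I would extract the spectral conclusion. Since $E^{(t_0)}_{\phi,\cL}$ is a product of the elementary matrices $E^{(t)}_{k,\epsilon}$, each satisfying $(E^{(t)}_{k,\epsilon})^{-1} = E^{(t)}_{k,\epsilon}$ by \cref{lem:EF_formulae}(1) and hence having determinant $\pm 1$, we have $\det E^{(t_0)}_{\phi,\cL} = \pm 1$; the product of eigenvalues therefore has absolute value $1$, so the spectral radius is at least $1$. Moreover, applying the classical Perron–Frobenius/Krein–Rutman theorem for a linear map preserving a closed convex cone with nonempty interior, the spectral radius $\lambda^{(t_0)}_{\phi,\cL}$ is itself an eigenvalue with a nonnegative eigenvector in $\cC^\stab_\gamma$.

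Finally, for the eigenvector claim, when $\gamma$ is fully-mutating, \cref{lem:str_conv} says $\cC^\stab_\gamma$ is strictly convex, hence a pointed cone with nonempty interior; the Perron–Frobenius theorem for pointed cones immediately produces a nonzero eigenvector $\boldsymbol{x}(w_{\phi,\cL})$ inside $\cC^\stab_\gamma$. In the alternative regime $\lambda^{(t_0)}_{\phi,\cL} > 1$, pointedness may fail, and I would argue by iteration: choose $v_0 \in \interior \cC^\stab_\gamma$, form $v_n := (E^{(t_0)}_{\phi,\cL})^n v_0$, and consider the normalized sequence $v_n / \|v_n\|$. Because $\det = \pm 1$ forces at least one reciprocal relation among eigenvalues, and because $\lambda > 1$ strictly dominates the reciprocals of the non-spectral-radius eigenvalues arising from $\det = \pm 1$, the generalized eigenspace of $\lambda$ controls the asymptotics, and any subsequential limit lies in the closed cone $\cC^\stab_\gamma$ and is a nonzero eigenvector of $\lambda$. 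The main obstacle I foresee is making the dominance argument rigorous in the presence of possible Jordan blocks at $\lambda$ or other eigenvalues with modulus $\lambda$ on the unit circle; the fully-mutating case bypasses this difficulty completely via the pointedness of $\cC^\stab_\gamma$.
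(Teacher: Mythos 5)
There is a genuine gap at the invariance step. You define $\cC^\stab_\gamma$ to be the first-level sign chamber $\cC^{\bep^\stab_{\gamma,\cL}}_\gamma = \mathrm{rel.cl}\{w : \bep_\gamma(w) = \bep^\stab\}$, but the $\cC^\stab_\gamma$ appearing in the theorem statement (and in the paper's definition) is the iterated intersection $\bigcap_{n\geq 1}\cC^{(\bep^\stab)^n}_{\gamma^n}$; these cones are not the same. Your argument that $E^{(t_0)}_{\phi,\cL}$ preserves the first-level cone --- ``the orbit $\{\phi^n(w)\}_{n\geq 1}$ still terminates in stable sign, hence $\bep_\gamma(\phi(w)) = \bep^\stab$'' --- is not valid: sign stability on $\cL$ (\cref{d:sign stability}) only guarantees \emph{eventual} stabilization of signs for points of $\cL$, whereas a point $w$ with $\bep_\gamma(w)=\bep^\stab$ need not lie in $\cL$, and even if it did, eventual stabilization would not force $\bep_\gamma(\phi(w))=\bep^\stab$ at the very next step. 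The first-level chamber is in fact not $\phi$-invariant in general. The iterated cone $\bigcap_{n\geq 1}\cC^{(\bep^\stab)^n}_{\gamma^n}$ is $\phi$-invariant by construction, since the sign sequence of $\phi(w)$ along $\gamma^n$ is the tail of the sign sequence of $w$ along $\gamma^{n+1}$; you must use it, and you must separately check it is nonzero (this follows from tameness of $\cL$: for $w\in\cL\cap\interior\cC^+_{(t)}$ and $n$ beyond the stabilization time, $\phi^n(w)$ lands in this cone) and, in the fully-mutating case, pointed (a consequence of \cref{lem:str_conv} applied to $\gamma^n$, since a subset of a strictly convex cone is strictly convex).

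The remainder of your skeleton --- $\det E^{(t_0)}_{\phi,\cL}=\pm 1$ from \cref{lem:EF_formulae}(1) gives $\rho(E)\geq 1$, and then cone-preserving Perron--Frobenius gives the eigenvector --- is the right strategy and is presumably close to the argument in \cite{IK19}. However, you correctly flag, and do not resolve, the difficulty in the non-fully-mutating case when only $\lambda^{(t_0)}_{\phi,\cL}>1$ is assumed: the invariant cone may fail to be pointed, and then the naive power iteration you sketch is obstructed both by possible Jordan blocks at the dominant eigenvalue and by other eigenvalues of modulus $\lambda$. As written, that case remains open in your proof; the fully-mutating case is the only one you actually close.
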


In particular $\gamma$ is sign-stable on $\Omega$, we define the stable cone $\cC_{\gamma, \Omega}^\stab$ as
\begin{align*}
    \cC_{\gamma, \Omega}^\stab:= \bigcap_{n \geq 1} \cC_{\gamma^n}^{(\bep_{\gamma, \Omega}^\stab)^n},
\end{align*}
where $\bep^n := (\underbrace{\bep, \dots, \bep}_{n}) \in \{+, -\}^{nh}$ for $\bep \in \{+,-\}^h$.

Here, we give in a sense ``strongest" version of sign stability:
\begin{defi}
A mutation loop $\phi$ is \emph{uniformly sign-stable} if any representation path $\gamma$ of $\phi$ is sign-stable on $\bR_{>0} \cX_\bs(\bZ^\trop)$.
\end{defi}

We write $\bS \Omega := (\Omega \setminus \{0\}) /\bR_{>0}$ for a $\bR_{>0}$-invariant subset $\Omega \subset \cX_\bs(\bR^\trop)$.

\begin{defi}\label{def:NS_dyn}
Let $\phi$ be a mutation loop of a seed pattern $\bs$.
\begin{enumerate}
    \item The mutation loop $\phi$ has a \emph{North dynamics} on a subset $\Pi \subset \bS \cX_\bs(\bR^\trop)$ if there is a point $p^+_\phi \in \bS \cX_\bs(\bR^\trop)$ such that \begin{align*}
    \lim_{n \to \infty} \phi^n(v) = p^+_\phi \in \bS \cX_\bs(\bR^\trop)
    \end{align*}
    for any point $v \in \Sigma$.
    We call the point $p^+_\phi$ the \emph{attracting point} of $\phi$ on $\Pi$ and the value $\lambda_\phi$ so that $\phi(w) = \lambda_\phi w$ for $w \in p^+_\phi$ called \emph{stretch factor} of $\phi$.
    \item The mutation loop $\phi$ has a \emph{North-South dynamics} (on $\bS \cX_\bs(\bR^\trop)$) if there are two fixed points $p^+_\phi, p^-_\phi \in \bS \cX_\bs(\bR^\trop)$ of $\phi$ such that
    \begin{itemize}
        \item if $w \not \in p^-_\phi$, then $\lim_{n\to \infty} [\phi^n(w)] = p^+_\phi$;
        \item if $w \not \in p^+_\phi$, then $\lim_{n\to \infty} [\phi^{-n}(w)] = p^-_\phi$.
    \end{itemize}
\end{enumerate}
Moreover, the North-South dynamics of $\phi$ with fixed points $p^\pm_\phi$ is \emph{non-parabolic} if 
\begin{align}\label{eq:non-para}
    \bep_\gamma(p^+_\phi) \neq \bep_\gamma(p^-_\phi)
\end{align}
for some representation paths $\gamma$ of $\phi$.
\end{defi}

\begin{defi}\label{def:X-fill}
A point $w \in \cX_\bs(\bR^\trop)$ is \emph{$\cX$-filling} if $x^{(t)}_i(w) \neq 0$ for all $i \in I$ and all $t \in \bT_I$.
\end{defi}

\begin{thm}[{\cite[Theorem 5.6]{TS}}]\label{thm:TS_main}
Let $\cC$ be a closed strictly convex and of full dimensional cone in $\bR^N$, $N\geq 2$, and let $E$ be a presentation matrix of a linear map $f: \bR^N \to \bR^N$ such that $f(\cC) \subset \cC$.
Then, the following are equivalent:
\begin{enumerate}
    \item $f^n(\cC) \subset \interior\cC$ for some $n > 0$.
    \item $\ker f \cap \cC = \{0\}$ and $\bigcap_{n \geq 0} f^n(\cC) = \bR_{\geq 0} v$ for some $v \in \interior \cC$.
    \item $\rho(E)$ is the simple eigenvalue such that its eigenvector lies in $\interior\cC$ and it is greater than the modulus of all other eigenvalues. 
\end{enumerate}
\end{thm}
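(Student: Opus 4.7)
The plan is to prove $(1) \Leftrightarrow (2)$ via compactness of the projectivized cone and Hilbert's projective metric, and $(1) \Leftrightarrow (3)$ via Perron--Frobenius spectral analysis for cone-preserving maps, with the Birkhoff--Hopf contraction theorem (applied on both $\cC$ and its dual $\cC^*$) as the unifying tool.

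For $(1) \Leftrightarrow (2)$: fix $n_0$ with $f^{n_0}(\cC) \subset \interior \cC$. Since $\cC$ is a proper cone (closed, strictly convex, full-dimensional), its projectivization $\bS \cC$ is compact, and the Birkhoff--Hopf theorem says $f^{n_0}$ is a strict contraction on $(\interior \cC, d_H)$ in Hilbert's projective metric. The nested non-empty compacts $\bS f^{n_0 m}(\cC)$ therefore have $d_H$-diameters tending to zero, so their intersection is a single point $[v] \in \bS \interior \cC$; interpolating with the remaining finitely many iterates gives $\bigcap_{k \geq 0} f^k(\cC) = \bR_{\geq 0} v$, and $\ker f \cap \cC = \{0\}$ follows because $f^{n_0}$ is manifestly injective on $\cC \setminus \{0\}$. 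Conversely, from $(2)$ the decreasing compacts $\bS f^k(\cC)$ have intersection $\{[v]\} \subset \bS \interior \cC$, so any neighborhood of $[v]$ inside $\bS \interior \cC$ contains $\bS f^k(\cC)$ eventually, giving $(1)$.

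For $(1) \Rightarrow (3)$: Birkhoff--Hopf together with the Banach fixed-point theorem produces an eigenvector $v \in \interior \cC$ with $f(v) = \lambda v$, $\lambda > 0$. Polar duality ($f^{n_0}(\cC \setminus \{0\}) \subset \interior \cC$ is equivalent to $(f^\tr)^{n_0}(\cC^* \setminus \{0\}) \subset \interior \cC^*$) lets $f^\tr$ inherit $(1)$ on the proper cone $\cC^*$, yielding $\xi \in \interior \cC^*$ with $f^\tr \xi = \lambda \xi$ (eigenvalues match by pairing). The $f$-equivariant rank-one projection $P(w) := \xi(w)\, v / \xi(v)$ has $f$-invariant kernel $\ker \xi$ disjoint from $\cC \setminus \{0\}$. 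Writing any $u \in \ker \xi$ as $u = w_1 - w_2$ with $w_i \in \cC$ (full-dimensionality) and using $\bigcap_k \lambda^{-k} f^k(\cC) = \bR_{\geq 0} v$ forces $\lambda^{-k} f^k(u) \to 0$, whence $\rho(f|_{\ker \xi}) < \lambda$ by the spectral radius formula and $\lambda = \rho(E)$ is strictly dominant. A Jordan block at $\lambda$ would produce a two-dimensional $f$-invariant subspace whose intersection with $\cC$ is a proper planar subcone preserved by a Jordan block, which is impossible (one extreme ray is sent to the negative side), so $\lambda$ is simple. For $(3) \Rightarrow (1)$, the Jordan decomposition gives $\lambda^{-k} E^k \to P$ exponentially fast; $P(w)$ lies in $\cC \cap \bR v = \bR_{\geq 0} v$ and is nonzero for $w \neq 0$ via the dual eigenvector coming from $(3)$ on $f^\tr$, so uniform convergence on the compact $\bS \cC$ yields $\bS f^k(\cC) \subset \bS \interior \cC$ for all large $k$.

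The main obstacle I foresee is the strict-dominance-and-simplicity step in $(1) \Rightarrow (3)$: ruling out complex eigenvalues of modulus $\lambda$ and forbidding Jordan blocks requires both the dual-cone construction and a quantitative use of the collapse from $(1) \Rightarrow (2)$; the cleanest bookkeeping is to package everything through Birkhoff--Hopf applied simultaneously on $\cC$ and $\cC^*$.
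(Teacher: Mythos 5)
The paper does not prove this theorem; it is quoted verbatim from Tam--Schneider \cite[Theorem 5.6]{TS}, so there is no in-paper argument to compare yours against. Your proof of $(1)\Leftrightarrow(2)$ and of $(1)\Rightarrow(3)$ is sound and follows the standard modern route: given $(1)$, polar duality transfers $(1)$ to $f^{\tr}$ on $\cC^{*}$, Birkhoff--Hopf produces interior Perron vectors $v\in\interior\cC$ and $\xi\in\interior\cC^{*}$ for the common eigenvalue $\lambda$, and the $f$-invariant splitting $\bR^{N}=\bR v\oplus\ker\xi$ plus $\rho(f|_{\ker\xi})<\lambda$ gives dominance and simplicity in one stroke (your separate Jordan-block discussion is therefore redundant). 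One small wrinkle: the step ``$\lambda^{-k}f^{k}(u)\to0$ for $u\in\ker\xi$'' needs the normalization $\xi(\lambda^{-k}f^{k}w)=\xi(w)$ to control magnitudes; the cone intersection $\bigcap_{k}f^{k}(\cC)=\bR_{\geq0}v$ alone only controls directions.

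The genuine gap is in $(3)\Rightarrow(1)$, at the step ``nonzero for $w\neq0$ via the dual eigenvector coming from $(3)$ on $f^{\tr}$''. Condition $(3)$ as stated asserts only that the \emph{right} Perron eigenvector lies in $\interior\cC$; it says nothing about the left one, and $\xi\in\interior\cC^{*}$ is exactly what you need for $P(w)=\xi(w)v/\xi(v)$ to be nonzero on $\cC\setminus\{0\}$. It does not follow. Take $\cC=\bR_{\geq0}^{3}$ and
\[
E=\begin{pmatrix}2&0&0\\ 1&1&0\\ 0&1&1\end{pmatrix}.
\]
Then $E(\cC)\subset\cC$, the eigenvalues are $2,1,1$, so $\rho(E)=2$ is simple and strictly dominant, and its eigenvector $(1,1,1)^{\tr}$ lies in $\interior\cC$; thus $(3)$ holds. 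But the left eigenvector is $(1,0,0)$, the face $\{0\}\times\bR_{\geq0}^{2}=\ker\xi\cap\cC$ is $E$-invariant, and $E^{n}e_{3}=e_{3}\in\partial\cC$ for all $n$, so $(1)$ and $(2)$ both fail. This is less a defect of your plan than of the theorem statement as quoted: the Tam--Schneider characterization of $\cC$-primitivity requires that \emph{both} the right and left Perron eigenvectors be interior (equivalently, that $f$ leave no nontrivial face of $\cC$ invariant). With that extra clause added to $(3)$, your argument closes; without it, $(3)\Rightarrow(1)$ is simply false, and no amount of Birkhoff--Hopf will bridge it.
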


\begin{thm}\label{thm:NS=>unifSS}
If a mutation loop $\phi$ has a non-parabolic North-South dynamics on $\bS \cX_\bs(\bR^\trop)$ with $\cX$-filling fixed points, then it is uniformly sign-stable with the stretch factor larger than $1$.
\end{thm}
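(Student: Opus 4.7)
The plan is to take the candidate stable sign to be $\bep^\stab_\gamma := \bep_\gamma(p^+_\phi)$ for each representation path $\gamma : t_0 \to t'$ of $\phi$, and verify the conditions of \cref{d:sign stability}: that $\bep^\stab_\gamma$ is strict, that the sign function $\bep_\gamma$ is constantly equal to $\bep^\stab_\gamma$ on an $\bR_{>0}$-invariant open neighborhood $U_\gamma$ of the ray through $p^+_\phi$, and that for every $w \in \bR_{>0}\cX_\bs(\bZ^\trop) \setminus \{0\}$ the iterates $\phi^n(w)$ eventually enter $U_\gamma$. The first two items reduce to the $\cX$-filling hypothesis combined with \cref{lem:pres_mat_x}: because every tropical coordinate of $p^+_\phi$ at every vertex of $\bT_I$ is nonzero, the sign sequence $\bep_\gamma(p^+_\phi)$ is strict, so $\mu_\gamma$ is differentiable at $p^+_\phi$ and its sign function is locally constant on a conical open neighborhood of the corresponding ray.

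The heart of the argument is convergence. The North--South dynamics gives $\phi^n(w) \to p^+_\phi$ in $\bS\cX_\bs(\bR^\trop)$ whenever $[w] \neq p^-_\phi$, and together with the local constancy of $\bep_\gamma$ near $p^+_\phi$ this yields $\bep_\gamma(\phi^n w) = \bep^\stab_\gamma$ for all sufficiently large $n$. The only obstruction is therefore to rule out $[w] = p^-_\phi$ for some nonzero $w \in \bR_{>0}\cX_\bs(\bZ^\trop)$. Since $p^-_\phi$ is also $\cX$-filling, $\phi$ acts linearly on an open neighborhood of its ray by an integer matrix of determinant $\pm 1$, and restricted to the ray it is multiplication by some real $\lambda^-_\phi$. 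If a nonzero integer point $w$ lay on this ray, then $(\lambda^-_\phi)^n w$ would be integral for every $n \geq 0$; I plan to use the North--South splitting (forcing $|\lambda^-_\phi| < \lambda_\phi$) together with the non-parabolic hypothesis (which distinguishes $p^+_\phi$ and $p^-_\phi$ as genuinely different eigendirections) to conclude $|\lambda^-_\phi| < 1$, so that $(\lambda^-_\phi)^n w \to 0$ while remaining integral, forcing $w = 0$.

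For the assertion $\lambda_\phi > 1$, the linearization of $\phi$ at $p^+_\phi$ is an integer matrix with $|\det| = 1$, hence the product of its eigenvalues has modulus $1$; the attractivity of $p^+_\phi$ together with the repelling eigendirection $p^-_\phi$ with $|\lambda^-_\phi| < 1$ then forces $\lambda_\phi > 1$. A cleaner packaging is to apply \cref{thm:TS_main} to the stable cone $\cC^\stab_\gamma$ built from the sign data: the attracting dynamics imply that some iterate of $\phi$ carries this cone into its relative interior, and condition (3) of that theorem identifies $\lambda_\phi$ as the simple dominant eigenvalue, necessarily $>1$. The main obstacle I anticipate is the integer-lattice argument excluding $[w] = p^-_\phi$, specifically extracting $|\lambda^-_\phi| < 1$ rigorously from the hypotheses; here non-parabolicity is essential, since without it the stable signs at $p^\pm_\phi$ could coincide and the claim would become vacuous, but $\lambda_\phi > 1$ could fail. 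Applying \cref{thm:TS_main} to $\phi$ near $p^+_\phi$ and to $\phi^{-1}$ near $p^-_\phi$ in tandem should provide the needed simplicity and dominance of $\lambda_\phi$ together with the strict bound $|\lambda^-_\phi| < 1$.
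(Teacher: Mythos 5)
Your plan follows the same skeleton as the paper's proof: set $\bep^\stab_\gamma := \bep_\gamma(p^+_\phi)$, use the $\cX$-fillingness of $p^+_\phi$ for strictness and local constancy of the sign, use North--South convergence for eventual stabilization, and apply \cref{thm:TS_main} to the stable cone $\cC^+_\gamma := \bigcap_{n\geq 1}\cC^{(\bep^+_\gamma)^n}_{\gamma^n}$ (with $\bep^+_\gamma=\bep_\gamma(p^+_\phi)$) to deduce $\lambda_\phi>1$. There are, however, two real gaps relative to the paper's argument.

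First, \cref{thm:TS_main} requires the cone to be \emph{full-dimensional}, a hypothesis you never verify. The paper supplies it by noting that the cluster modular group acts simplicially on the Fock--Goncharov cluster complex, so each $\phi^n(\cC^+_{(t)})$ is a maximal (hence full-dimensional) cone, and that $\phi^n(\cC^+_{(t)}) \subset \cC^+_\gamma$ for $n \gg 0$; therefore $\cC^+_\gamma$ is full-dimensional. You cannot invoke the Tam--Schneider theorem without this.

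Second, the lattice argument you propose for excluding a nonzero integer point $w$ with $[w]=p^-_\phi$ cannot yield $|\lambda^-_\phi|<1$. Near an $\cX$-filling fixed direction the linearization of $\phi$ is an integer unimodular matrix, so any rational eigenvalue is $\pm 1$ by the rational-root theorem; thus, under the hypothesis you are trying to refute, one is forced to $\lambda^-_\phi = 1$, not $<1$, and no contradiction appears. Your proposed repair, applying \cref{thm:TS_main} ``in tandem'' to $\phi^{-1}$ near $p^-_\phi$, needs a $\phi^{-1}$-invariant full-dimensional cone with $p^-_\phi$ in its interior; the natural candidate $\bigcap_{n\geq 1}\cC^{(\bep_\gamma(p^-_\phi))^n}_{\gamma^n}$ is $\phi$-invariant but not $\phi^{-1}$-invariant, so constructing the right cone is additional work and still meets the same full-dimensionality obstruction. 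The paper avoids this eigenvalue argument entirely: it attributes the exclusion of integral points on the $p^-_\phi$-ray directly to the $\cX$-fillingness of $p^-_\phi$, and invokes \cref{thm:TS_main} only once, for $\phi$ on $\cC^+_\gamma$. Also, your first ``packaging'' of $\lambda_\phi>1$ via a product-of-eigenvalues bound tying $\lambda_\phi$ to $\lambda^-_\phi$ is not available: non-parabolicity means precisely that $p^+_\phi$ and $p^-_\phi$ are eigendirections of \emph{different} unimodular integer matrices $E_{\gamma,\bep_\gamma(p^+_\phi)}$ and $E_{\gamma,\bep_\gamma(p^-_\phi)}$, so there is no single determinant relating their eigenvalues.
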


\begin{proof}
Let $\gamma$ be a representation path of $\phi$ and let \[\cC^+_\gamma := \bigcap_{n \geq 1} \cC^{\bep^n}_{\gamma^n},\] where $\bep = \bep^+_\gamma := \bep_\gamma(w)$ and $w$ is a point of the attracting point $p^+_\phi \in \bS \cX_\bs(\bR^\trop)$ of $\phi$.
The points in $\Omega_\bs^\bQ$ are sent into $\interior \cC^{\bep_\gamma^+}_\gamma$ by iteration of $\phi$ since $p^-_\phi$ is $\cX$-filling, and $p^+_\phi \subset \interior \cC_\gamma^{\bep_\gamma^+}$ by the $\cX$-fillingness of $p^+_\phi$.
Hence $\phi$ is uniformly sign-stable.
On the other hand, if $\gamma$ satisfies the non-parabolic condition \eqref{eq:non-para}, we have
\begin{align}\label{eq:North_dyn}
    \bigcap_{n \geq 0} \phi^n(\cC^+_\gamma) = p^+_\phi.
\end{align}
Moreover, we have $\phi^n(\cC^+_{(t)}) \subset \cC^+_\gamma$ for $n \gg 0$.
Since the action of the mutation loops on the Fock--Goncharov cluster complex
\footnote{The Fock--Goncharov cluster complex is a simplicial fan on $\cX_\bs(\bR^\trop)$.}
\[
\fF^+_{(t)} := \{ \mbox{the faces of $\mu_\gamma^{-1}(\cC^+_{(t')})$} \mid \gamma: t \to t' \mbox{ in } \bT_I \}
\]
is simplicial, $\phi^n(\cC^+_{(t)})$ is of full dimensional for all $n \in \bZ_{\geq 0}$, thus so is $\cC^+_\gamma$.
Therefore, by \eqref{eq:North_dyn} and \cref{thm:TS_main}, the stretch factor of $\phi$ is lager than $1$.
\end{proof}

At the end of this subsection, we see the relatively easy-to-check sufficient condition of that a mutation loop has a North dynamics.
\begin{prop}
Let $\phi$ be a mutation loop with a representation path $\gamma$ which is sign-stable on $\Omega^\bQ_\bs$.
If there is an integer $n \geq 0$ satisfying
\begin{align}\label{eq:cone_stab}
    \cC^\stab_{\gamma} = \cC_{\gamma^n}^{(\bep^\stab_\gamma)^n},
\end{align}
then $\phi$ has a North dynamics on $\bS \Omega^\bQ_\bs$.
\end{prop}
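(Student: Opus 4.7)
The plan is to reduce the convergence statement to the linear dynamics of the stable presentation matrix on the stable cone $\cC^\stab_\gamma$, and then to invoke \cref{thm:TS_main} to produce a dominant Perron--Frobenius direction serving as the attracting point $p_\phi^+$.

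\textbf{Step 1 (Reduction to the stable cone).} Sign stability of $\gamma$ on $\Omega^\bQ_\bs$ provides, for each $v \in \Omega^\bQ_\bs \setminus \{0\}$, an integer $s=s(v) \geq 0$ with $\bep_\gamma(\phi^k(v)) = \bep^\stab_\gamma$ strictly for all $k \geq s$; in particular $\phi^s(v) \in \interior \cC^\stab_\gamma$. Thus it suffices to exhibit a common point $p_\phi^+ \in \bS \cX_\bs(\bR^\trop)$ such that $[\phi^k(w)] \to p_\phi^+$ for every $w \in \cC^\stab_\gamma \setminus \{0\}$.

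\textbf{Step 2 (Linear model on a polyhedral cone).} By \cref{cor:asymptotic linearity}, $\phi$ acts on $\cC^\stab_\gamma$ as the linear map with matrix $E := E^{(t_0)}_{\phi,\Omega^\bQ_\bs}$, so $\phi^k(w) = E^k w$ for every $w \in \cC^\stab_\gamma$, and $E(\cC^\stab_\gamma) \subset \cC^\stab_\gamma$. The hypothesis $\cC^\stab_\gamma = \cC_{\gamma^n}^{(\bep^\stab_\gamma)^n}$ exhibits $\cC^\stab_\gamma$ as a closed rationally polyhedral cone, cut out by the $n h$ linear half-space conditions carried by the stable sign along the successive steps of $\gamma^n$. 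One invokes \cref{lem:str_conv} (in the fully-mutating case) or the simplicial structure of the Fock--Goncharov cluster complex to conclude, in addition, that $\cC^\stab_\gamma$ is strictly convex and of full dimension.

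\textbf{Step 3 (Primitivity).} The main technical step is to verify hypothesis (1) of \cref{thm:TS_main}, namely the existence of $m \geq 1$ with $E^m(\cC^\stab_\gamma) \subset \interior \cC^\stab_\gamma$. By the stabilization, $\interior \cC^\stab_\gamma$ is precisely the locus where $\bep_{\gamma^n}(w) = (\bep^\stab_\gamma)^n$ strictly, i.e.\ where no zero sign appears in the first $n$ mutations of the orbit. For any $w \in \cC^\stab_\gamma$, the orbit $\phi^k(w)$ has strictly stable sign for $\gamma$ from some index $k(w)$ onward (sign stability extended to the closure via the stabilization hypothesis), hence $\phi^{k}(w) = E^{k} w \in \interior \cC^\stab_\gamma$ for $k$ large. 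A compactness argument on the compact base $\bS \cC^\stab_\gamma$ upgrades this to a uniform $m$.

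\textbf{Step 4 (Perron--Frobenius and conclusion).} With the hypotheses of \cref{thm:TS_main} verified, that theorem yields a simple positive eigenvalue $\lambda_\phi$ of $E$ that strictly dominates all other eigenvalues in modulus, with eigenvector $w_\phi^+ \in \interior \cC^\stab_\gamma$, and $\bigcap_{k \geq 0} E^k(\cC^\stab_\gamma) = \bR_{\geq 0} w_\phi^+$. Setting $p_\phi^+ := [w_\phi^+] \in \bS \cX_\bs(\bR^\trop)$, the nested ray property gives $[\phi^k(w)] = [E^k w] \to p_\phi^+$ for every $w \in \cC^\stab_\gamma \setminus \{0\}$, and combining this with Step 1 yields the desired North dynamics on $\bS \Omega^\bQ_\bs$.

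\textbf{Main obstacle.} The principal technical point is Step 3: deducing the strict self-map $E^m(\cC^\stab_\gamma) \subset \interior \cC^\stab_\gamma$ from the finite-depth stabilization of the nested intersection, particularly in handling boundary points of $\cC^\stab_\gamma$ whose signs along $\gamma$ are non-strict. A secondary subtlety is ensuring strict convexity and full-dimensionality of $\cC^\stab_\gamma$ outside the fully-mutating case, for which one may need to exploit the ambient simplicial fan structure of the Fock--Goncharov cluster complex.
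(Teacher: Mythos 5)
Your Step 3 has a genuine gap. The compactness argument you invoke on $\bS\cC^\stab_\gamma$ requires that \emph{every} point $w\in\cC^\stab_\gamma\setminus\{0\}$ eventually enters $\interior\cC^\stab_\gamma$ under iteration of $\phi$, so that the open sets $V_m=\{w:\phi^m(w)\in\interior\cC^\stab_\gamma\}$ cover the compact base and the nesting $V_m\subset V_{m+1}$ yields a uniform $m$. But the hypothesis only gives sign stability on $\Omega^\bQ_\bs$, i.e.\ on the (projectively dense but not closed, not compact) set of rational points; you have no pointwise entrance into the interior for irrational points of $\cC^\stab_\gamma$, and hence the $V_m$ are not known to cover $\bS\cC^\stab_\gamma$. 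Your parenthetical ``sign stability extended to the closure via the stabilization hypothesis'' is doing all the work and is not justified: the stabilization hypothesis \eqref{eq:cone_stab} controls the shape of the cone, not the sign sequence of irrational points. You flag this yourself as the main obstacle, and it genuinely breaks the argument.

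The paper avoids this by exploiting that $\cC^\stab_\gamma$ is \emph{rationally} polyhedral (via \cref{lem:str_conv} applied to $\gamma^n$ through hypothesis \eqref{eq:cone_stab}): its finitely many extreme rays have rational generators, hence lie in $\Omega^\bQ_\bs$ where sign stability does apply. Applying sign stability to each of the finitely many generators gives a uniform $m$ with all generators mapped into $\interior\cC^\stab_\gamma$, and convexity of the interior then gives $\phi^m(\cC^\stab_\gamma\setminus\{0\})\subset\interior\cC^\stab_\gamma$. This is the missing idea in your write-up: finiteness of generators replaces compactness, precisely because it restricts attention to rational points. Once you have this, your Steps 1, 2 and 4 are fine, though note that the convergence $[\phi^k(w)]\to p^+_\phi$ in Step 4 really rests on the simple dominant eigenvalue (condition (3) of \cref{thm:TS_main}), and the paper also cites \cite[Theorem 5.4]{TS} for this dynamical conclusion.
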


\begin{proof}
Let $\gamma$ be such a representation path of $\phi$ and $n \geq 0$ be an integer satisfying \eqref{eq:cone_stab}.
Then, by \cref{lem:str_conv}, the cone $\cC^\stab_{\gamma}$ is rationally polyhedral.
Thus, the generators of $\cC^\stab_{\gamma}$ are contained in $\Omega_\bs^\bQ$.
By definition of sign stability, each generator $w$ of $\cC^\stab_{\gamma}$ satisfies $\phi^m(w) \in \interior\cC^\stab_{\gamma}$ for some $m >0$.
It is equivalent to $\phi^m(\cC_\gamma^\stab) \subset \interior\cC_\gamma^\stab$.
Therefore by the equivalence of (1) and (2) of \cref{thm:TS_main} and \cite[Theorem 5.4]{TS}, $\gamma$ has a North dynamics on $\bS \Omega_\bs^\bQ$.
\end{proof}

\subsection{The autoequivalences associated to the sign-stable mutation loops}
Let us consider the seed pattern $\bs = \bs^{\mathrm{cat}}$ defined in \cref{subsec:Ginzburg_dga}.
For a vertex $t_0 \in \bT_I$, let $\cC^\pm_{(t_0)}:=\{w \in \X_{(t_0)}(\bR^\trop) \mid \pm x_i^{(t_0)}(w) \geq 0,~i \in I\}$.

\begin{defi}\label{def:der_eqv_path}
For a an edge path $\gamma: t_0 \overbar{k_0} t_1 \overbar{k_1} \cdots \overbar{k_{h-1}} t_h$ of $\bT_I$ and a strict sign $\bep = (\epsilon_0, \epsilon_1, \dots, \epsilon_{h-1})$, we define the equivalence $F_{\gamma,{\bep}}^*$ as the following composition:
\begin{align*}
    F_{\gamma,{\bep}}^*: \sD^{(t_h)} \xrightarrow{F^*_{k_{h-1},\epsilon_{h-1}}} \cdots \xrightarrow{F^*_{k_1,\epsilon_1}} \sD^{(t_1)} \xrightarrow{F^*_{k_0,\epsilon_0}} \sD^{(t_0)}.
\end{align*}
\end{defi}

Let $\gamma$ be an edge path as \cref{def:der_eqv_path}.

\begin{lem}[{\cite[Theorem 3.5]{Nag}, \cite[Theorem 2.18]{Plab}}]\label{lem:intermediate}
If $\bep$ is the tropical sign $\bep_\gamma^\trop$ of a path $\gamma: t \to t'$, then we have the following:
\begin{enumerate}
    \item The subcategory $F_{\gamma,{\bep}}^*(\sA^{(t')})$ is the right tilt of $\sA^{(t)}$ at some torsion pair $(\sT_\gamma, \sF_\gamma)$.
    \item $F^*_{\gamma, \bep}(\Gamma^{(t')}) \in \add(\Gamma^{(t)}[-1]) \star \add(\Gamma^{(t)})$.
\end{enumerate}
\end{lem}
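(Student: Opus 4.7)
The plan is to prove both statements simultaneously by induction on the length $h$ of the path $\gamma: t_0 \overbar{k_0} t_1 \overbar{k_1} \cdots \overbar{k_{h-1}} t_h$. The base case $h=0$ is immediate since $F^*_{\gamma,\bep}$ is the identity. For the inductive step, decompose $\gamma = \gamma' \cdot e$ where $e = (t_{h-1} \overbar{k_{h-1}} t_h)$ and set $\bep = (\bep', \epsilon_{h-1})$, where $\bep'$ is the tropical sign of $\gamma'$. By the induction hypothesis applied to $G := F^*_{\gamma', \bep'}$, we may assume $G(\sA^{(t_{h-1})}) = \sF \star \sT[-1]$ for some torsion pair $(\sT, \sF)$ in $\sA^{(t_0)}$, and $G(\Gamma^{(t_{h-1})}) \in \add(\Gamma^{(t_0)}[-1]) \star \add(\Gamma^{(t_0)})$.

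The critical point is to translate tropical information into categorical information. I would establish the following key identification: under $G$, the simple module $S_{k_{h-1}}^{(t_{h-1})}$ satisfies $G(S_{k_{h-1}}^{(t_{h-1})}) \in \sA^{(t_0)}$ precisely when $\epsilon_{h-1} = +$, and $G(S_{k_{h-1}}^{(t_{h-1})}) \in \sA^{(t_0)}[1]$ when $\epsilon_{h-1} = -$. This combines (i) the identification in \cref{lem:pres_mat_x} of the tangent map of $\mu^x_\gamma$ with the product of elementary matrices $E^{(t_\nu)}_{k_\nu, \epsilon_\nu}$, (ii) the compatibility of $[F^*_{\gamma', \bep'}|_{\Dfd}]$ with $\widetilde{\mu}^*_{\gamma', \bep'}$ on Grothendieck groups, and (iii) the sign-coherence property of $c$-vectors (Derksen--Weyman--Zelevinsky), which ensures that a nonzero class of the prescribed tropical sign lifts to an object concentrated in a single cohomological degree relative to $\sA^{(t_0)}$.

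Once this identification is in hand, part (1) follows: Keller--Yang tells us $F^*_{k_{h-1}, \epsilon_{h-1}}(\sA^{(t_h)})$ is the right tilt of $\sA^{(t_{h-1})}$ at $(\sS_{k_{h-1}}, \sS_{k_{h-1}}^\bot)$ when $\epsilon_{h-1} = +$, and the left tilt at $({}^\bot \sS_{k_{h-1}}, \sS_{k_{h-1}})$ when $\epsilon_{h-1} = -$. Transporting this tilt across $G$ and using that $G(\sS_{k_{h-1}})$ lies entirely within $\sA^{(t_0)}$ (resp.\ $\sA^{(t_0)}[1]$), the composite is again a right tilt of $\sA^{(t_0)}$: in the $+$ case one enlarges the torsion part $\sT$ by $G(\sS_{k_{h-1}})$; in the $-$ case the left tilt operation at a class already in $\sA^{(t_0)}[1]$ combines with the existing tilt $\sF \star \sT[-1]$ to yield a torsion pair whose right tilt still lies between $\sA^{(t_0)}$ and $\sA^{(t_0)}[1]$. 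Part (2) then follows by an analogous bookkeeping using Keller--Yang's silting mutation formula: the two-term presilting condition $X \in \add(\Gamma^{(t_0)}[-1]) \star \add(\Gamma^{(t_0)})$ is preserved under a mutation performed with the tropical (i.e.\ ``correct'') sign, while the opposite sign would push a summand outside the two-term window.

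The main obstacle is Step (ii)--(iii), namely securing the categorical interpretation of the tropical sign: we must argue that the tropical sign at a generic point of $\interior\cC^+_{(t_0)}$ is precisely the invariant that detects whether $G(S_{k_{h-1}}^{(t_{h-1})})$ lives in $\sA^{(t_0)}$ or in $\sA^{(t_0)}[1]$. Everything else reduces to careful manipulation of torsion pairs and silting mutations once this dictionary is established, which is exactly the content of \cite[Theorem 3.5]{Nag} and \cite[Theorem 2.18]{Plab}.
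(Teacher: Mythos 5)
The paper itself does not prove this lemma: it is quoted directly from Nagao and Plamondon, so there is no ``paper proof'' to match against. Your sketch reconstructs the argument of those references in outline, and the overall scaffolding (induction on the length of the path, identifying the tropical sign with the cohomological position of the image of the simple $S_{k_{h-1}}^{(t_{h-1})}$ via sign-coherence of $c$-vectors, then composing tilts/silting mutations) is the right one and is genuinely what happens in the cited papers.

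There are, however, two problems. First, a concrete sign error: if $G(\sA^{(t_{h-1})}) = \sF \star \sT[-1]$ is a right tilt of $\sA^{(t_0)}$, then its simple objects sit in $\sF \subset \sA^{(t_0)}$ or in $\sT[-1] \subset \sA^{(t_0)}[-1]$, \emph{not} in $\sA^{(t_0)}[1]$. So the claim for $\epsilon_{h-1} = -$ should place $G(S_{k_{h-1}}^{(t_{h-1})})$ in $\sA^{(t_0)}[-1]$; otherwise composing with the Keller--Yang tilt leaves the window $\sA^{(t_0)} \star \sA^{(t_0)}[-1]$ and the inductive step breaks. (You would also need to track carefully which of $\sT$ and $\sF$ absorbs $G(\sS_{k_{h-1}})$ in each case; your ``enlarge the torsion part'' description in the $+$ case doesn't obviously match $G(\sS_{k_{h-1}}) \subset \sF$.) Second, the argument is circular at the decisive point: the identification between the tropical sign $\epsilon_{h-1}$ and the cohomological degree of $G(S_{k_{h-1}}^{(t_{h-1})})$ is exactly what needs proving, and you close the paragraph by saying it ``is exactly the content of [Nag, Theorem 3.5] and [Plab, Theorem 2.18],'' which are the very results being established. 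To make this a proof rather than a paraphrase of the citations, you would need to spell out independently the $c$-vector sign-coherence argument (via the decorated representations of Derksen--Weyman--Zelevinsky or the $g$-/$c$-vector duality in Nakanishi--Zelevinsky) and verify that the sign convention in \cref{d:sign} matches the convention used there.
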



\begin{lem}[{\cite[Proposition 3.1]{Plaa}}]\label{lem:rigid_index}
Suppose that two modules $M_1, M_2 \in \per^{(t)}$ satisfying $M_1, M_2 \in \add(\Gamma^{(t)}[-1]) \star \add(\Gamma^{(t)}) \subset \per^{(t)}$.
Then, $M_1 \cong M_2$ if and only if $[M_1] = [M_2] \in K_0(\per^{(t)})$.
\end{lem}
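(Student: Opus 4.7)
The plan is to show that within the subcategory $\add(\Gamma^{(t)}[-1]) \star \add(\Gamma^{(t)})$, each object admits a canonical ``projective presentation'' whose class in $K_0(\per^{(t)})$ is a complete invariant for its isomorphism class. Abbreviate $\Gamma = \Gamma^{(t)}$ and $P_i = e_i \Gamma$.

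For each $M_\nu$ ($\nu = 1, 2$), I would first produce a minimal presentation triangle
\[
M_\nu \to P_0^{(\nu)} \xrightarrow{f_\nu} P_1^{(\nu)} \to M_\nu[1]
\]
with $P_0^{(\nu)}, P_1^{(\nu)} \in \add(\Gamma)$, in which the map $P_0^{(\nu)} \to M_\nu[1]$ is a minimal right $\add(\Gamma)$-approximation. Such a minimal presentation exists by the Krull--Schmidt property and Hom-finiteness, and minimality forces $P_0^{(\nu)}$ and $P_1^{(\nu)}$ to share no common indecomposable summand. The triangle then yields $[M_\nu] = [P_0^{(\nu)}] - [P_1^{(\nu)}]$ in $K_0(\per(\Gamma)) \cong \bigoplus_{i \in I} \bZ[P_i]$. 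Since any element of this free abelian group admits a unique decomposition as a difference of effective classes with disjoint supports, the equality $[M_1] = [M_2]$ forces $P_0^{(1)} \cong P_0^{(2)}$ and $P_1^{(1)} \cong P_1^{(2)}$.

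The main obstacle is then to upgrade the matching of projective terms to an isomorphism $M_1 \cong M_2$, i.e.\ to show that $f_\nu$ is determined, up to the $\Aut(P_0^{(\nu)}) \times \Aut(P_1^{(\nu)})$-action, by the isomorphism class of $M_\nu$. This is where the rigidity of $M_\nu$ is essential: without an assumption like $\Hom_{\per(\Gamma)}(M_\nu, M_\nu[1]) = 0$, the claim fails, since $\Gamma \oplus \Gamma[-1]$ and $0$ already have equal classes in $K_0$. The intended applications of the lemma -- chiefly to $F_{\gamma,\bep}^*(\Gamma^{(t')})$, the image of the tilting object $\Gamma^{(t')}$ under a derived equivalence -- supply this rigidity, and under it one checks by applying $\Hom(-, P_1^{(\nu)})$ to the presentation triangle that $f_\nu$ is uniquely determined in $\Aut(P_0^{(\nu)}) \backslash \Hom(P_0^{(\nu)}, P_1^{(\nu)}) / \Aut(P_1^{(\nu)})$. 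Lifting the isomorphisms of the previous step along this datum then yields $M_1 \cong M_2$, as in Plamondon's proof of \cite[Proposition~3.1]{Plaa}.
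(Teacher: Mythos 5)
You are right that the lemma is false as literally stated: taking $M_1 = \Gamma^{(t)} \oplus \Gamma^{(t)}[-1]$ and $M_2 = 0$, both lie in $\add(\Gamma^{(t)}[-1]) \star \add(\Gamma^{(t)})$ and have the same (zero) class in $K_0(\per^{(t)})$ without being isomorphic. The missing hypothesis is rigidity, $\Hom(M_\nu, M_\nu[1]) = 0$, which does hold for the objects $F^*_\nu(P_i^{(t_\nu)})$ to which the paper applies the lemma in \cref{prop:indep_of_choice} and which is built into the cited \cite[Proposition 3.1]{Plaa}. That is a useful catch. Note the paper does not reproduce a proof---it is a bare citation---so the comparison target is Plamondon's argument, not anything internal.

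The minimal-presentation step in your sketch is sound: $\add(\Gamma^{(t)})$ is Krull--Schmidt (each $\End(P_i)$ is local even when the Jacobian algebra is infinite dimensional), a minimal presentation with $P_0^{(\nu)}, P_1^{(\nu)} \in \add(\Gamma^{(t)})$ sharing no common indecomposable summand exists, and freeness of $K_0(\per^{(t)}) = \bigoplus_i \bZ[P_i]$ then forces $P_0^{(1)} \cong P_0^{(2)}$ and $P_1^{(1)} \cong P_1^{(2)}$ from $[M_1] = [M_2]$. The genuine gap is the last step. Applying $\Hom(-, P_1^{(\nu)})$ to the presentation triangle only computes $\Hom(M_\nu, P_1^{(\nu)}) \cong \operatorname{coker}\big(f_\nu^* : \Hom(P_1^{(\nu)}, P_1^{(\nu)}) \to \Hom(P_0^{(\nu)}, P_1^{(\nu)})\big)$; it does not pin down the morphism $f_\nu$ modulo the $\Aut(P_0^{(\nu)}) \times \Aut(P_1^{(\nu)})$-action, and hence does not show that the pair $(P_0^{(\nu)}, P_1^{(\nu)})$ determines $M_\nu$. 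That is precisely the hard content. In Plamondon (following Dehy--Keller), this is proved by an induction on direct summands---one first shows that two rigid objects with equal index share an indecomposable summand, then cancels and iterates---not by an approximation argument. Your sketch collapses exactly where the actual work happens.
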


\begin{prop}\label{prop:indep_of_choice}
For two representation paths $\gamma_1: t \to t_1$ and $\gamma_2: t \to t_2$ of a mutation loop $\phi$, we have the following:
\begin{align}\label{eq:isoms}
    F^*_1(P_i^{(t_1)}) \cong F^*_2(P_i^{(t_2)}),\quad F^*_1(S_i^{(t_1)}) \cong F^*_2(S_i^{(t_2)})
\end{align}
for any $i \in I$.
Here, $F^*_\nu = F^*_{\gamma_\nu, \bep_\nu^\trop}$ for $\nu = 1,2$.
\end{prop}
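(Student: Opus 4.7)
The plan is to split the proposition into two parts: first the isomorphism of the projective images, then the isomorphism of the simple images via Hom--duality between projectives and simples. The unifying idea is that although the paths $\gamma_1, \gamma_2$ differ, they share the same tropical sign $\bep_\nu^\trop = \bep_{\gamma_\nu}^\trop$ on $\interior \cC^+_{(t)}$, and the tangent map $d\phi^x$ at any point of this cone is path-independent. By \cref{lem:pres_mat_x} this path-independence yields the matrix identity $P_1 \cdot E_{\gamma_1, \bep_1^\trop} = P_2 \cdot E_{\gamma_2, \bep_2^\trop}$, where $P_\nu$ encodes the seed isomorphism $t_\nu \sim_\bs t$. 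This single identity is the source of every coincidence I need.

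For the projective case, I would first invoke \cref{lem:intermediate}(2): since $\bep_\nu$ is the tropical sign, $F^*_\nu(\Gamma^{(t_\nu)}) \in \add(\Gamma^{(t)}[-1]) \star \add(\Gamma^{(t)})$, and because this subcategory is closed under direct summands, each $F^*_\nu(P_i^{(t_\nu)})$ also lies in it. By \cref{lem:rigid_index}, their isomorphism class is determined by their K-theory class in $K_0(\per^{(t)})$. The matrix of $[F^*_{k,\epsilon}|_\per]$ is $(\check{E}^{(t)}_{k,\epsilon})^\tr$, which by \cref{lem:EF_formulae} equals $E^{(t)}_{k,\epsilon}$; so the K-theoretic action of $F^*_\nu$ is exactly the iterated product $E_{\gamma_\nu, \bep_\nu^\trop}$ appearing in \cref{lem:pres_mat_x}. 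Converting the identity $P_1 E_{\gamma_1, \bep_1^\trop} = P_2 E_{\gamma_2, \bep_2^\trop}$ into a statement about the images of the basis vectors $[P_i^{(t_\nu)}]$, after using the seed isomorphism to identify the relevant standard basis elements of $K_0(\per^{(t_1)})$ and $K_0(\per^{(t_2)})$, I obtain $[F^*_1(P_i^{(t_1)})] = [F^*_2(P_i^{(t_2)})]$, and \cref{lem:rigid_index} concludes the projective case.

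For the simple case, I would use the Hom--characterization of simples. Since $F^*_\nu$ is a derived equivalence and $\Hom_{\sD^{(t_\nu)}}(P_j^{(t_\nu)}, S_i^{(t_\nu)}[n]) = \delta_{ij}\delta_{n,0}$, we have
\[
\Hom_{\sD^{(t)}}(F^*_\nu(P_j^{(t_\nu)}), F^*_\nu(S_i^{(t_\nu)})[n]) = \delta_{ij}\delta_{n,0}.
\]
By \cref{lem:intermediate}(1), each $F^*_\nu(S_i^{(t_\nu)})$ is a simple object of the tilted heart $F^*_\nu(\sA^{(t_\nu)})$, a right tilt of $\sA^{(t)}$ at some torsion pair. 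Once the projective case is established, both $F^*_1(S_i^{(t_1)})$ and $F^*_2(S_i^{(t_2)})$ satisfy the same Hom--pattern against the common family $\{F^*_\nu(P_j^{(t_\nu)})\}_{j \in I}$, and together with the fact that $\{F^*_\nu(P_j^{(t_\nu)})\}_j$ is a silting object whose associated heart is $F^*_\nu(\sA^{(t_\nu)})$, this forces the two tilted hearts to coincide and then forces the two simples to be isomorphic.

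The main obstacle will be the careful bookkeeping of the permutations $\sigma_1, \sigma_2 \in \fS_I$ arising from the seed isomorphisms: the naive identification ``$P_i^{(t_1)}$ and $P_i^{(t_2)}$ share the same index $i$'' conceals a permutation mismatch that must be absorbed into the identity $P_1 E_{\gamma_1} = P_2 E_{\gamma_2}$. A secondary technical point is the assertion that the tilted hearts $F^*_\nu(\sA^{(t_\nu)})$ actually coincide (not merely that they are both right tilts at \emph{some} torsion pair); the cleanest route is to characterize the heart from the image of $\Gamma^{(t_\nu)}$ viewed as a silting object, which is path-independent by the projective part of the argument.
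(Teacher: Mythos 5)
Your proposal follows essentially the same route as the paper's own proof: obtain the $K$-theoretic identity $[F^*_1(P_i^{(t_1)})]=[F^*_2(P_i^{(t_2)})]$ from path-independence of the tropical cluster transformation, use \cref{lem:intermediate}(2) and \cref{lem:rigid_index} to promote it to an isomorphism in $\per^{(t)}$, and then characterize the simples by Hom-nonvanishing against the projectives.

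Two smaller remarks. First, there is a sign-of-order slip: the matrix of $[F^*_\nu|_\per]$ is $(\check{E}_{\gamma_\nu,\bep^\trop_{\gamma_\nu}})^\tr = E_{\gamma_\nu,\bep^\trop_{\gamma_\nu}}^{-1}$, not $E_{\gamma_\nu,\bep^\trop_{\gamma_\nu}}$ as you assert. Your identity $(\check{E}^{(t)}_{k,\epsilon})^\tr = E^{(t)}_{k,\epsilon}$ is correct for a single step, but $F^*_{\gamma,\bep}$ composes the step functors in the order opposite to the one defining $E_{\gamma,\bep}$ in \cref{lem:pres_mat_x}; since each $E^{(t_m)}_{k_m,\epsilon_m}$ is an involution, the resulting product is the inverse of $E_{\gamma,\bep}$. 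This is harmless here, because $E_{\gamma_1,\bep^\trop_{\gamma_1}}=E_{\gamma_2,\bep^\trop_{\gamma_2}}$ holds if and only if the inverses agree. Second, your observation that the simple case actually needs the coincidence of the two tilted hearts $F^*_1(\sA^{(t_1)})=F^*_2(\sA^{(t_2)})$, and that this is most cleanly obtained from the path-independence of the silting object $\bigoplus_i F^*_\nu(P_i^{(t_\nu)})$, is a legitimate refinement: the paper's proof leaves this step implicit, jumping directly from the Hom-characterization of $F^*_1(S_i^{(t_1)})$ inside $F^*_1(\sA^{(t_1)})$ to the conclusion without explicitly matching the two hearts.
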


\begin{proof}
From the definition of the $\bs$-equivalence of paths, we have the equality of the matrices
\begin{align*}
    E_{\gamma_1, \bep^\trop_{\gamma_1}} = E_{\gamma_2, \bep^\trop_{\gamma_2}}.
\end{align*}
It is equivalent to 
\begin{align}\label{eq:K_proj_eq}
    [F^*_1(P_i^{(t_1)})] = [F^*_2(P_i^{(t_2)})] \in K_0(\per^{(t)})
\end{align}
since the presentation matrix of $[F_\nu^*]: K_0(\per^{(t_\nu)}) = M^{(t_\nu)} \to K_0(\per^{(t)}) = M^{(t)}$ is given by $(\check{E}_{\gamma_\nu, \bep^\trop_{\gamma_\nu}})^\tr$ for $\nu = 1,2$.
By \cref{lem:intermediate} (2), we have 
\begin{align*}
    F^*_\nu(P_i^{(t_1)}) \in \add(\Gamma^{(t)}[-1]) \star \add(\Gamma^{(t)})
\end{align*} for $\nu=1,2$ and $i \in I$.
Thus, \cref{lem:rigid_index} tells us that \eqref{eq:K_proj_eq} are equivalent to the first isomorphisms of \eqref{eq:isoms}.
The objects $F^*_1(S_i^{(t_1)})$ are the simples of an abelian category $F^*_1(\sA^{(t_1)})$ since $F^*_1$ is an equivalence.
Moreover, the simple $S \in F^*_1(\sA^{(t_1)})$ is isormorphic to $F^*_1(S^{(t_1)}_i)$ if and only if $\Hom(F^*_1(P^{(t_1)}_i), S) \neq 0$.
Thus the first isomorphisms of \eqref{eq:isoms} imply the second isomorphisms.
\end{proof}

Let $\phi$ be a QP mutation loop with a representation path $\gamma: t \to t'$ which is sign-stable on $\cC^+_{(t)}$.
Then we define the functor $F_\phi = F^{(t)}_\phi$ as
\begin{align*}
    (F^{(t)}_\phi)^{-1}: \sD^{(t)} \xrightarrow{\sim} \sD^{(t')} \xrightarrow{F^*_{\gamma, \bep^\stab}} \sD^{(t)},
\end{align*}
where $\bep^\stab = \bep^\stab_{\gamma, \cC^+_{(t)}}$ and the first equivalence comes from the isomorphism of Ginzburg dg algebras $\Gamma^{(t)} \cong \Gamma^{(t')}$ induced by the right equivalence $(Q^{(t)}, W^{(t)}) \sim (Q^{(t')}, W^{(t')})$.
By \cref{prop:indep_of_choice}, the restrictions of the functors to $\per^{(t)}$ and $\Dfd^{(t)}$ do not depend on the choice of the sign-stable representation paths emanating at $t \in \bT_I$ since
\begin{align*}
    F^*_{\gamma, \bep^\stab} = (F^*_{\gamma^{n}, \bep^\trop_{\gamma^{n}}})^{-1} \circ F^*_{\gamma^{n+1}, \bep^\trop_{\gamma^{n+1}}}
\end{align*}
for $n \in \bZ_{\geq 0}$ such that $\bep_\gamma(\phi^n(w)) = \bep^\stab$.

The statement similar to \cref{lem:intermediate} holds for the functor $F_\phi$:
\begin{lem}\label{lem:F_phi_inter}
Let $t \in \bT_I$.
\begin{enumerate}
    \item The subcategory $F^{(t)}_\phi(\sA^{(t)})$ is the left tilt of $\sA^{(t)}$ at some torsion pair $(\sT^{(t)}_\phi, \sF^{(t)}_\phi)$.
    \item $F^{(t)}_\phi(\Gamma^{(t)}) \in \add(\Gamma^{(t)}) \star \add(\Gamma^{(t)}[1])$.
\end{enumerate}
\end{lem}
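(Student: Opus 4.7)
The plan is to reduce the lemma to \cref{lem:intermediate} via the iterative identity recalled just before the lemma: modulo the seed isomorphism,
\[
F_\phi^{-1} \;\simeq\; G_n^{-1}\circ G_{n+1},\qquad G_m := F^*_{\gamma^m,\bep^\trop_{\gamma^m}},
\]
valid for any $n$ large enough that $\bep_\gamma(\phi^n(w))=\bep^\stab$ on the interior of $\cC^+_{(t)}$. Both $G_n$ and $G_{n+1}$ are mutation functors along paths taken with tropical sign, so \cref{lem:intermediate} applies to them directly.

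For part (2) I will work with the silting object $T_m := G_m(\Gamma^{(t)})$ (using the seed iso $\Gamma^{(t_m)}\cong\Gamma^{(t)}$). By \cref{lem:intermediate}(2), both $T_n$ and $T_{n+1}$ lie in $\add(\Gamma^{(t)}[-1])\star\add(\Gamma^{(t)})$. The crux is the intermediate step
\[
T_{n+1}\in\add(T_n[-1])\star\add(T_n),\qquad(\ast)
\]
i.e.\ $T_{n+1}$ is two-term over $T_n$ in the negative direction. Granting $(\ast)$, applying the equivalence $G_n^{-1}$ to it yields $F_\phi^{-1}(\Gamma^{(t)})=G_n^{-1}(T_{n+1})\in\add(\Gamma^{(t)}[-1])\star\add(\Gamma^{(t)})$. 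The standard silting-mutation duality --- for silting objects $T,T'$ of equal rank, $T'\in\add(T[-1])\star\add(T)$ if and only if $T\in\add(T')\star\add(T'[1])$ --- then produces $F_\phi(\Gamma^{(t)})\in\add(\Gamma^{(t)})\star\add(\Gamma^{(t)}[1])$, which is (2). Part (1) follows from (2) by the Koenig--Yang correspondence between silting objects and bounded t-structures: the heart attached to the silting object $F_\phi(\Gamma^{(t)})$ is precisely $F_\phi(\sA^{(t)})$, and the two-term structure of $F_\phi(\Gamma^{(t)})$ relative to $\Gamma^{(t)}$ identifies $F_\phi(\sA^{(t)})$ as the left tilt of $\sA^{(t)}$ at the torsion pair $(\sT^{(t)}_\phi,\sF^{(t)}_\phi)$ read off from the defining triangle $\Gamma^0\to F_\phi(\Gamma^{(t)})\to\Gamma^1[1]\to\Gamma^0[1]$.

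The main obstacle is $(\ast)$. Translated via $G_n^{-1}$ it becomes the assertion that $F^*_{\gamma,\bep^\stab}(\Gamma^{(t_1)})\in\add(\Gamma^{(t_n)}[-1])\star\add(\Gamma^{(t_n)})$, which would be immediate from \cref{lem:intermediate}(2) if $\bep^\stab$ were the tropical sign of $\gamma$ but is not so in general. My plan is to combine \cref{lem:intermediate}(2) applied to $\gamma^{n+1}$ and to $\gamma^n$ separately and to descend the two-term filtration of $T_{n+1}$ over $\add(\Gamma^{(t)})$ to a filtration over $\add(T_n)$, invoking \cref{lem:rigid_index} to pin down the resulting complex on the nose once its $K_0$-class has been matched through the factorisation of presentation matrices $E_{\gamma^{n+1},\bep^\trop_{\gamma^{n+1}}}=E_{\gamma^n,\bep^\trop_{\gamma^n}}\cdot E_{\gamma,\bep^\stab}$.
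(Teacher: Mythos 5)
Your plan goes through silting objects and silting mutation duality, which is a genuinely different route from the paper. The paper's own proof is much more direct: for (1) it invokes \cref{lem:intermediate}(1) once, to get the right tilt $F(\sA^{(t')}) = \sF_\gamma \star \sT_\gamma[-1]$, and then simply observes that setting $\sT' := F^{-1}(\sF_\gamma)$, $\sF' := F^{-1}(\sT_\gamma[-1])$ gives a torsion pair on $\sA^{(t')}$ with $F^{-1}(\sA^{(t)}) = \sF'[1] \star \sT'$, i.e.\ the left tilt; for (2) it writes $[F(\Gamma^{(t')})] = \sum_i t_i[P^{(t)}_i]$, uses that $I=I_+\sqcup I_-$, and reads the two-term interval off the minimal presentation of $F(\Gamma^{(t')})$ (\cite[Lemma 3.5]{Plaa}). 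No reference to $T_n$, $T_{n+1}$ or Koenig--Yang is used. Your proposal also inverts the logical dependency: the paper proves (1) and (2) independently, whereas you deduce (1) from (2).

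More importantly, you correctly identify $(\ast)$ as the crux and then do not actually close it. The obstruction you name is real: $\bep^\stab$ need not be the tropical sign $\bep^\trop_{\phi^n(\gamma)}$ of the path $\phi^n(\gamma)$ (it is the sign of $\phi^n(\gamma)$ at $\mu_{\gamma^n}(w)$ for $w\in\interior\cC^+_{(t)}$, and $\mu_{\gamma^n}(w)$ need not lie in $\interior\cC^+_{(t_n)}$), so \cref{lem:intermediate}(2) does not give $G_n^{-1}(T_{n+1})\in\add(\Gamma^{(t_n)}[-1])\star\add(\Gamma^{(t_n)})$ directly. Your proposed fix --- ``descend the two-term filtration over $\add(\Gamma^{(t)})$ to one over $\add(T_n)$ and invoke \cref{lem:rigid_index}'' --- is circular: \cref{lem:rigid_index} can identify two objects only once both are \emph{already known} to lie in $\add(\Gamma^{(t)}[-1])\star\add(\Gamma^{(t)})$; it does not by itself produce membership of $T_{n+1}$ in the different interval $\add(T_n[-1])\star\add(T_n)$, which is exactly $(\ast)$. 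Knowing that both $T_n$ and $T_{n+1}$ are two-term over $\add(\Gamma)$ plus a matching of $K_0$-classes does not, by itself, force $T_{n+1}$ to be two-term over $\add(T_n)$. Some sign-coherence input (playing the role of ``$I=I_+\sqcup I_-$'' in the paper's proof of (2)) would be needed here, and it is absent from your sketch. So as written the central step is missing, not merely sketched.

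Finally, it is fair to say that the discrepancy between $\bep^\trop_\gamma$ and $\bep^\stab$ which stalls your argument is also elided in the paper's proof (which quietly sets $F:=F^*_{\gamma,\bep^\trop_\gamma}$ and treats it as $F_\phi^{-1}$ modulo the seed isomorphism); you at least make that subtlety visible. But a more careful proof should fill this in by a WLOG reduction to a representation path with $\bep^\trop_\gamma=\bep^\stab$ (using \cref{prop:indep_of_choice}) rather than by your $(\ast)$-route, which in its current form does not close.
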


\begin{proof}
Let $\gamma: t \to t'$ be a representation path of $\phi$ such that it is sign-stable on $\cC^+_{(t)}$, and let $F := F^*_{\gamma, \bep^\trop_\gamma}$ for short.

\noindent
(1):
We put $\sT' := F^{-1}(\sF_\gamma)$ and $\sF' := F^{-1}(\sT_\gamma[-1])$, where $(\sT_\gamma, \sF_\gamma)$ is the torsion pair of $\sA^{(t)}$ in \cref{lem:intermediate} (1).
Then, $(\sT', \sF')$ is a torsion pair of $\sA^{(t')}$ since $F$ is an exact autoequivalence.
Moreover, we have 
\[F^{-1}(\sA^{(t)}) = F^{-1}(\sT_\gamma \star \sF_\gamma) = \sF'[1] \star \sT'.\]
Hence, the subcategories corresponding to $\sT'$ and $\sF'$ through the equivalence $\Dfd^{(t')} \cong \Dfd^{(t)}$ induced by the equivalence of Ginzburg dg algebras give the desired torsion pair.

\noindent
(2):
Let $[F(\Gamma^{(t')})] = \sum_{i \in I} t_i [P^{(t)}_i]$ for $t_i \in \bZ$ and $I_\pm := \{ i \in I \mid \pm t_i >0 \}$.
Since $F$ is an equivalence, $I = I_+ \sqcup I_-$.
Thus, the minimal presentation of $F(\Gamma^{(t')})$ (\cite[Lemma 3.5]{PLaa}) implies that
\[\add(F(\Gamma^{(t')})) = \add\Big( \bigoplus_{\ell \in I_-} P^{(t)}_\ell[-1]\Big) \star \add\Big( \bigoplus_{j \in I_+}P^{(t)}_j \Big).\]
Therefore, 
\[
\add(F(\Gamma^{(t')})) \star \add(F(\Gamma^{(t')}[1])) \supset  \add\Big( \bigoplus_{j \in I_+}P^{(t)}_j \Big) \star \add\Big( \bigoplus_{\ell \in I_-} P^{(t)}_\ell\Big).
\]
Hence, we have $\Gamma^{(t)} \in \add(F(\Gamma^{(t')})) \star \add(F(\Gamma^{(t')}[1]))$ and it is equivalent to the desired statement.
\end{proof}
\section{Categorical entropy of $F_\phi$}

\subsection{Categorical entropy of exact endofunctors}

\begin{defi}
Let $\sD$ be a triangulated category and $X, Y \in \sD$. The \emph{complexity} of $Y$ relative to $X$ is the function $\delta_-(X,Y) : \bR \to [0, \infty]$ defined by
\[ \delta_T(X,Y) := \begin{cases}
0 & \text{if } Y \cong 0, \\
\inf \left\{ \displaystyle \sum_{k=1}^n e^{i_k T} \left|
\begin{tikzcd}[column sep=tiny, nodes={scale=0.85}]
0 \ar[rr] && Y_1 \ar[rr] \ar[ld] && \cdots \ar[rr] && Y_{n-1} \ar[rr, end anchor={[xshift=-10pt]}] &&\hspace{-10pt} Y \oplus Y' \ar[ld]\\
 & X[i_1] \ar[lu, dashed] &&&&&&X[i_n] \ar[lu, dashed]
\end{tikzcd} \right\} \right. & \text{if } Y \in \langle X \rangle, \\
\infty & \text{if } Y \notin \langle X \rangle,
\end{cases}\]
for $T \in \bR$ and some.
Here, $\langle X \rangle$ denotes the smallest thick subcategory of $\sD$ containing $X$.
\end{defi}

We give some basic properties of the complexity which are used in latter:
\begin{lem}[{\cite[Proposition 2.2]{DHKK}}]\label{lem:complexity}
Let $\sD$ be a triangulated category and $D, E, F \in \sD$. Then
\begin{enumerate}
    \item $\delta_T(D, F) \leq \delta_T(D, E)\delta_T(E, F)$,
    \item $\delta_T(D, E \oplus F) \leq \delta_T(D, E) + \delta_T(D, F)$.
\end{enumerate}
\end{lem}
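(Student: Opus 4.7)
The plan is to follow the standard argument for the complexity function, which mirrors the subadditivity/triangle-inequality properties of similar quantities (\emph{e.g.}, the rank of a module). Both inequalities come down to pasting together Postnikov-type towers.

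For part (1), I would first fix $\varepsilon > 0$ and choose a tower realizing the complexity of $F$ relative to $E$ up to error $\varepsilon$, namely an iterated cone presentation
\[
0 \to F_1 \to F_2 \to \cdots \to F_{m-1} \to F \oplus F'
\]
whose successive cones are shifts $E[j_1], \dots, E[j_m]$ with $\sum_l e^{j_l T} \leq \delta_T(E,F) + \varepsilon$. Similarly choose a tower for $E$ relative to $D$ with cones $D[i_1], \dots, D[i_n]$ satisfying $\sum_k e^{i_k T} \leq \delta_T(D,E)+\varepsilon$. Shifting the latter tower by $[j_l]$ produces, for each $l$, a presentation of $E[j_l] \oplus E'[j_l]$ by shifts $D[i_k+j_l]$. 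The key step is then the octahedral/refinement argument: at each stage of the $E$-tower for $F$, replace the cone $E[j_l]$ by the refined $D$-tower, absorbing the extra summand $E'[j_l]$ into the accumulated ``error'' summand on the right end of the filtration. Iterating this across $l=1,\dots,m$ yields a tower for some $F \oplus F''$ with cones $D[i_k+j_l]$ and total weight $\sum_{k,l} e^{(i_k+j_l)T} = \bigl(\sum_k e^{i_k T}\bigr)\bigl(\sum_l e^{j_l T}\bigr)$. This is bounded by $(\delta_T(D,E)+\varepsilon)(\delta_T(E,F)+\varepsilon)$; letting $\varepsilon \to 0$ gives the bound.

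For part (2), I would again choose towers for $E$ and $F$ relative to $D$ realizing $\delta_T(D,E)+\varepsilon$ and $\delta_T(D,F)+\varepsilon$ respectively, say with cones $D[i_1],\dots,D[i_n]$ and $D[j_1],\dots,D[j_m]$, and tips $E \oplus E'$ and $F \oplus F'$. Concatenating (\emph{i.e.}, stacking the second tower on top of the first via the zero connecting maps) produces a tower whose cones are the $D[i_k]$ followed by the $D[j_l]$, with tip $(E \oplus E') \oplus (F \oplus F') = (E \oplus F) \oplus (E' \oplus F')$. Since this exhibits $E \oplus F$ (with extra summand $E' \oplus F'$) as built from $n+m$ shifts of $D$ of total weight $\sum_k e^{i_k T} + \sum_l e^{j_l T}$, we obtain $\delta_T(D, E \oplus F) \leq \delta_T(D,E) + \delta_T(D,F) + 2\varepsilon$, and the claim follows by letting $\varepsilon \to 0$.

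The only nontrivial point is the refinement step in part (1): one must verify that substituting a tower inside another tower is legitimate and produces a genuine filtration whose tip is of the form $F \oplus (\text{junk})$ rather than something more complicated. This is handled by repeated use of the octahedral axiom, together with the observation that allowing the extra summand $Y'$ on the tip in the definition of $\delta_T$ precisely absorbs all the unwanted $E'[j_l]$-contributions created at each substitution. The case $Y \cong 0$ or $Y \notin \langle X \rangle$ is trivial, and the infimum over presentations is what ultimately gives the stated inequalities.
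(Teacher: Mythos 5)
The paper does not give a proof of this lemma; it is stated as a direct citation to \cite[Proposition 2.2]{DHKK}. Your argument correctly reconstructs the standard one: concatenation of Postnikov towers (with identity maps on the already-built part) for part (2), and tower-refinement by substituting an inner tower via iterated octahedra for part (1), with the extra summand $Y'$ allowed at the tip of the filtration in the definition of $\delta_T$ doing exactly the job of soaking up the summands $E'[j_l]$ created during substitution. The one place you could be more explicit is what ``absorbing into the accumulated error summand'' means operationally: before substituting, replace each $F_{l'}$ with $l' \geq l$ by $F_{l'} \oplus E'[j_l]$ (adding a split triangle $0 \to E'[j_l] \to E'[j_l] \to 0$), so that the $l$-th cone becomes $E[j_l] \oplus E'[j_l]$ and matches the top of the shifted $D$-tower exactly; the final tip is then $(F \oplus F') \oplus \bigoplus_l E'[j_l]$, which is still of the allowed form $F \oplus (\text{junk})$, and the weight count $\bigl(\sum_k e^{i_k T}\bigr)\bigl(\sum_l e^{j_l T}\bigr)$ is unaffected. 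With that clarification the proof is complete.
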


When a triangulated category $\sD$ is equivalent to $\langle G \rangle$ for some $G \in \sD$, we say that $\sD$ has a \emph{split generator} as $G$.
For instance, $G_{Q,W} := \bigoplus_{i \in I} S_i$ (resp. $\Gamma_{Q,W}$) is a split generator of $\Dfd(\Gamma_{Q,W})$ (resp. $\per(\Gamma_{Q,W})$).

\begin{defi}[categorical entropy]
Let $\sD$ be a triangulated category with a split-generator $G$ and let $F: \sD \to \sD$ be an exact endofunctor. The \emph{entropy} of $F$ is the function $h_-(F) : \bR \to [-\infty, \infty)$ defined by
\[ h_T(F) := \lim_{n \to \infty} \frac{1}{n} \log \delta_T(G, F^n(G)), \] 
where $T \in \bR$.
\end{defi}

It is known that the entropy function is independent of the choice of split generator \cite[Lemma 2.5]{DHKK}.
If $\sD$ has a stability condition $\sigma$, Ikeda defines the mass growth function and he compares it with the entropy function \cite{Ike}.
\begin{defi}[mass growth, \cite{DHKK,Ike}]
Let $\sD$ be a triangulated category with a stability condition $\sigma = (\sA, Z)$ and let $E \in \sD$ be a nonzero object with a Harder--Narasimhan filtration as \cref{prop:HN_filt}.
Then, the \emph{mass of $E$ with a parameter} is the function $m_{\sigma, -}(E): \bR \to \bR_{>0}$ defined by
\begin{align}\label{eq:def_mass_with_param}
    m_{\sigma, T}(E) := \sum_{i=1}^n |Z(A_i)| e^{\varphi_i T}
\end{align}
for $T \in \bR$.
Moreover let $F: \sD \to \sD$ be an endofunctor.
The \emph{mass growth with respect to $\sigma$ and  $F$} is the function $h_{\sigma, -}(F): \bR \to [-\infty, \infty]$ defined by
\begin{align*}
    h_{\sigma, T}(F) := \sup_{E \in \sD} \Big\{ \limsup_{n \to \infty} \frac{1}{n} \log(m_{\sigma, T}(F^n E)) \Big\}
\end{align*}
for $T \in \bR$.
\end{defi}

Here we give some results of \cite{Ike} obtained by using mass growth functions.
\begin{thm}[{\cite[Theorem 1.2]{Ike}}]\label{thm:K_lower_bound}
Let $\sD$ be a triangulated category which has a split generator and a stability condition.
Then we have
\[ \log \rho([F]) \leq h_0(F) \]
for any endofunctor $F: \sD \to \sD$.
Here, $\rho([F])$ denotes the spectral radius of the induced automorphism $[F]: K_0(\sD) \to K_0(\sD)$ on the Grothendieck groups of $\sD$.
\end{thm}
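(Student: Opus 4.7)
The plan is to sandwich $\log \rho([F])$ between the norm-growth of $[F]^n v$ for a well-chosen vector $v \in K_0(\sD)$ and the complexity-growth $\delta_0(G, F^n G)$ for a split generator $G$, using the mass function $m_\sigma$ of the stability condition as the bridge. The three ingredients will be: (i) the estimate $m_\sigma(E) \leq m_\sigma(G) \cdot \delta_0(G, E)$ tying mass to complexity; (ii) the support property of $\sigma$, giving a comparison $\|[E]\| \leq C \cdot m_\sigma(E)$ on $K_0(\sD) \otimes \bR$; and (iii) the spectral-radius formula applied to $[F]$ on the finite-rank lattice $K_0(\sD)$.

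First I would establish (i). Given a filtration with factors $G[i_1], \dots, G[i_n]$ of $E \oplus E'$ realizing $\delta_0(G, E)$ up to $\varepsilon$, iterate the subadditivity $m_\sigma(Y) \leq m_\sigma(X) + m_\sigma(Z)$ on a triangle $X \to Y \to Z \to X[1]$ together with shift-invariance $m_\sigma(G[k]) = m_\sigma(G)$ (and $m_\sigma(E) \leq m_\sigma(E \oplus E')$); this gives $m_\sigma(E) \leq n \cdot m_\sigma(G) = m_\sigma(G) \cdot \delta_0(G, E)$. Combining with submultiplicativity (\cref{lem:complexity}(1)) and $\delta_0(F^n G, F^n E) \leq \delta_0(G, E)$ (apply $F^n$ to a filtration of $E$ by shifts of $G$), one obtains
\begin{align*}
m_\sigma(F^n E) \leq m_\sigma(G) \cdot \delta_0(G, F^n G) \cdot \delta_0(G, E),
\end{align*}
so $\limsup_n \tfrac{1}{n} \log m_\sigma(F^n E) \leq h_0(F)$ for every $E \in \sD$.

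Next I would deduce (ii): applying the support-property bound $\|[A]\| \leq C|Z([A])|$ to each Harder--Narasimhan factor of $E$ and summing, $\|[E]\| \leq \sum_i \|[A_i]\| \leq C \sum_i |Z(A_i)| = C \cdot m_\sigma(E)$. Finally, using (iii), I would pick $v \in K_0(\sD)$ with $\limsup_n \tfrac{1}{n} \log \|[F]^n v\| = \log \rho([F])$ via a real Jordan decomposition of $[F]$ on $K_0(\sD) \otimes \bR$: any lattice vector with nontrivial projection onto a (real) block of maximum modulus does the job, and among a $\bZ$-basis of $K_0(\sD)$ at least one such vector exists. Since $K_0(\sD)$ is generated by classes of objects, write $v = [E_+] - [E_-]$ and combine everything:
\begin{align*}
\|[F]^n v\| \leq \|[F^n E_+]\| + \|[F^n E_-]\| \leq C\big(m_\sigma(F^n E_+) + m_\sigma(F^n E_-)\big).
\end{align*}
Taking $\tfrac{1}{n} \log$ and $\limsup_n$ yields $\log \rho([F]) \leq h_0(F)$.

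The most delicate point will be the selection of $v$ when $[F]$ has complex eigenvalues of maximum modulus: one must justify that a real lattice vector still exhibits asymptotic norm growth $\rho([F])^n$. This follows from the real Jordan form (complex eigenvalues assemble into $2\times 2$ rotation-scaling blocks, on which norms of generic real vectors grow like $\rho([F])^n$) together with the observation that at least one basis vector has nontrivial component in a maximal-modulus block; if $\rho([F])=0$ the inequality is vacuous. Once this is in place, the rest is bookkeeping with mass, complexity, and the support-property norm comparison.
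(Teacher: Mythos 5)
Your proposal is correct and follows essentially the same strategy as Ikeda's original proof of \cite[Theorem 1.2]{Ike}: bound $\|[F^nE]\|$ by $m_\sigma(F^nE)$ via the support property, bound $m_\sigma(F^nE)$ by $m_\sigma(G)\cdot\delta_0(G,F^nG)\cdot\delta_0(G,E)$ via subadditivity of mass along a tower realizing the complexity, and feed in a lattice vector whose iterates realize the spectral radius. The handling of the Jordan-block selection and the reduction $v=[E_+]-[E_-]$ are both sound, so no real gap remains.
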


Let $A$ be a dg algebra over $\bC$ such that
\begin{itemize}
    \item $H^k(A) = 0$, for all $k >0$,
    \item $H^0(A)$ is a finite dimensional $\bC$-algebra.
\end{itemize}
We note that Ginzburg dg algebras are satisfying these conditions.

\begin{thm}[{\cite[Proposition 4.3]{Ike}}]\label{thm:cat_entropy_homology}
For an endofunctor $F$ on a finite dimensional derived category $\Dfd(A)$, whose categorical entropy is given by
\[ h_T(F) = \lim_{n \to \infty} \frac{1}{n} \log \left( \sum_{k \in \Z} \dim H^k(F^n G) e^{-kT} \right), \]
where $G$ is a split generator of $\Dfd(A)$.
\end{thm}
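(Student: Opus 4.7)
The plan is to construct the canonical stability condition on $\Dfd(A)$, identify its mass-with-parameter with the cohomological sum, and then squeeze the complexity $\delta_T(G, F^n G)$ between two multiples of that sum.

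\textbf{Construction and identification of mass.} The assumptions on $A$ guarantee that the canonical heart $\sA \subset \Dfd(A)$ exists, is of finite length, and its simple objects are the simples $S_1, \dots, S_m$ of $H^0(A)$. Set $G = \bigoplus_i S_i$ (a split generator) and define a stability function on $\sA$ by $Z_0([S_i]) := \sqrt{-1}$, so that $Z_0(M) = \sqrt{-1}\cdot\dim_\bC M$ for every $M \in \sA$. Every object of $\sA$ is semistable of phase $1/2$, so the HN property is trivial and $\sigma_0 := (\sA, Z_0)$ is a stability condition. For any $M \in \Dfd(A)$, the t-structure filtration by truncations $\tau_{\leq k}M$ has graded pieces $H^k(M)[-k]$ of phase $1/2-k$, which is exactly the HN filtration of $M$ with respect to $\sigma_0$. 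Hence
\[
m_{\sigma_0,T}(M) \;=\; \sum_k |Z_0(H^k(M)[-k])|\, e^{(1/2-k)T} \;=\; e^{T/2} \sum_k (\dim_\bC H^k(M))\, e^{-kT}.
\]

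\textbf{Upper bound (complexity $\lesssim$ cohomology).} For $N \in \sA$, a Jordan--H\"older series gives $\delta_T(G, N) \leq \dim_\bC N$, since each composition factor is some $S_i$, a direct summand of $G$. Applying \cref{lem:complexity} to the cohomological filtration of $M$, together with the identity $\delta_T(G, X[-k]) = e^{-kT}\delta_T(G, X)$, yields
\[
\delta_T(G, M) \;\leq\; \sum_k e^{-kT}\, \delta_T(G, H^k(M)) \;\leq\; \sum_k e^{-kT}\dim_\bC H^k(M).
\]
Putting $M = F^n G$, taking $\tfrac{1}{n}\log$ and letting $n \to \infty$ gives $h_T(F) \leq \lim_n \tfrac{1}{n}\log \sum_k e^{-kT}\dim H^k(F^n G)$.

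\textbf{Lower bound (cohomology $\lesssim$ complexity).} The general mass-versus-complexity inequality $m_{\sigma_0,T}(M) \leq m_{\sigma_0,T}(G)\cdot \delta_T(G, M)$ holds: by subadditivity of mass along exact triangles and $m_{\sigma_0,T}(G[i]) = e^{iT} m_{\sigma_0,T}(G)$, any filtration of $M$ by shifts $G[i_k]$ contributes $\sum_k e^{i_k T} m_{\sigma_0,T}(G)$, and one takes the infimum. Applied to $F^n G$ and combined with the mass identity of the first step, this gives
\[
e^{T/2} \sum_k (\dim H^k(F^n G))\, e^{-kT} \;\leq\; m_{\sigma_0,T}(G)\cdot \delta_T(G, F^n G).
\]
Since $m_{\sigma_0,T}(G)$ and $e^{T/2}$ are independent of $n$, taking $\tfrac{1}{n}\log$ as $n \to \infty$ yields the reverse inequality and hence the formula.

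\textbf{Main difficulty.} The technical crux is the mass-versus-complexity inequality in the third step: one must verify the subadditivity of $m_{\sigma_0,T}$ along arbitrary triangles (equivalently, that mass on HN factors satisfies the triangle inequality in $\bC$ weighted by $e^{\varphi T}$), then induct along the filtration realizing $\delta_T(G, M)$. Once this is in place, the reduction to cohomology is purely formal.
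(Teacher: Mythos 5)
The paper cites \cite[Proposition 4.3]{Ike} without reproducing a proof, and your argument mirrors Ikeda's own route: build the canonical stability condition $\sigma_0$ on the algebraic heart, identify the t-structure truncation filtration with the Harder--Narasimhan filtration for $\sigma_0$, and squeeze $\delta_T(G,F^nG)$ between the normalized mass $m_{\sigma_0,T}(F^nG)/m_{\sigma_0,T}(G)$ and the cohomological sum. One small correction: $Z_0(M)=\sqrt{-1}\cdot\mathrm{length}(M)$ rather than $\sqrt{-1}\cdot\dim_\bC M$, since the simples of $H^0(A)$ need not be one-dimensional; as length and dimension differ only by the bounded multiplicative factor $\max_i\dim_\bC S_i$, this disappears under $\tfrac1n\log$, so the stated formula is unaffected.
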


\subsection{Computation of the categorical entropy of \texorpdfstring{$F^{(t)}_\phi|_{\Dfd}$}{F|Dfd}}
In what follows, we fix a seed pattern $\bs$ as the seed pattern $\bs^{\mathrm{cat}}$ defined in \cref{subsec:Ginzburg_dga}.

\begin{thm}\label{thm:cat_entropy_Dfd}
Let $\phi$ be a QP mutation loop which has a sign-stable representation path $\gamma: t \to t'$ on $\cC^+_{(t)}$.
Then, we have
\begin{align*}
    h_T(F_\phi^{(t)}|_{\Dfd}) = \log \check{\lambda}^{(t)}_\phi
\end{align*}
for any $T \in \bR$.
Here $\check{\lambda}_\phi^{(t)} = \check{\lambda}^{(t)}_{\phi, \cC^+_{(t)}}$ denotes the spectral radius of $\check{E}^{(t)}_{\phi, \cC^+_{(t)}} = ((E^{(t)}_{\phi, \cC^+_{(t)}})^\tr)^{-1}$ where $E^{(t)}_{\phi, \cC^+_{(t)}}$ is the stable presentation matrix of $\phi$ (defined in \cref{cor:asymptotic linearity}).
\end{thm}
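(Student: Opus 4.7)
The plan is to sandwich $h_T(F_\phi^{(t)}|_{\Dfd})$ between $\log \check{\lambda}_\phi^{(t)}$ on both sides, combining Ikeda's $K$-theoretic lower bound with a bounded cohomological window coming from iterating the intermediate t-structure statement of \cref{lem:F_phi_inter}.

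For the lower bound, the categorification gives $[F^*_{k,\epsilon}|_{\Dfd}] = \widetilde{\mu}^*_{k,\epsilon}$ on $K_0(\Dfd) \cong N^{(t)}$, so after composing along a sign-stable representation path $\gamma:t\to t'$ and the seed isomorphism, the induced automorphism $[F_\phi^{(t)}|_{\Dfd}]$ on $K_0(\Dfd^{(t)})$ is presented (up to a conjugating permutation) by a matrix dual, in the sense of \cref{conv:check}, to the $\cX$-side stable presentation matrix $E^{(t)}_{\phi,\cC^+_{(t)}}$. Its spectral radius is therefore $\rho(\check{E}^{(t)}_{\phi,\cC^+_{(t)}}) = \check{\lambda}_\phi^{(t)}$, and \cref{thm:K_lower_bound} yields $h_0(F_\phi^{(t)}|_{\Dfd}) \geq \log \check{\lambda}_\phi^{(t)}$.

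The key observation for the upper bound, as well as for the $T$-independence, is that every iterate $\phi^n$ is itself a QP mutation loop whose representation path $\gamma^n$ is sign-stable on $\cC^+_{(t)}$ with stable sign $(\bep^\stab)^n$, so directly from the construction of $F_\phi^{(t)}$ we have $(F_\phi^{(t)})^n = F_{\phi^n}^{(t)}$ (\cref{prop:indep_of_choice}). Applying \cref{lem:F_phi_inter}(1) to $\phi^n$, the image $(F_\phi^{(t)})^n(\sA^{(t)})$ is a left tilt of $\sA^{(t)}$ and thus sits inside $\sA^{(t)}[1] \star \sA^{(t)}$. Taking the split generator $G = \bigoplus_{i\in I} S_i^{(t)} \in \sA^{(t)}$, the cohomology $H^k(F_\phi^n G)$ is supported in $\{-1, 0\}$ \emph{uniformly} in $n$. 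Plugging into \cref{thm:cat_entropy_homology},
\[ h_T(F_\phi^{(t)}|_{\Dfd}) = \lim_{n\to\infty} \frac{1}{n} \log\bigl(\dim H^0(F_\phi^n G) + \dim H^{-1}(F_\phi^n G)\,e^T\bigr), \]
and since the bracketed quantity lies between $\min(1,e^T)\cdot D_n$ and $\max(1,e^T)\cdot D_n$ with $D_n := \dim H^0(F_\phi^n G) + \dim H^{-1}(F_\phi^n G)$, the entropy is independent of $T$ and equals $\lim_n (1/n)\log D_n$.

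To match the lower bound it then suffices to show $D_n = O((\check{\lambda}_\phi^{(t)})^n)$. Writing $H^{-1}(F_\phi^n S_j^{(t)}) \in \sF_{\phi^n}$ and $H^0(F_\phi^n S_j^{(t)}) \in \sT_{\phi^n}$ with nonnegative dimension vectors $a^{(n)}_j,b^{(n)}_j \in \bZ^I_{\geq 0}$, the Euler identity in $K_0(\Dfd^{(t)})$ reads $b^{(n)}_j - a^{(n)}_j = (\check{E}^{(t)}_{\phi,\cC^+_{(t)}})^n e_j$, and combined with the positive Perron eigenvector provided by \cref{thm:Perron-Frobenius} this bounds $D_n = \sum_j (\|a^{(n)}_j\|_1 + \|b^{(n)}_j\|_1)$ by a constant times $(\check{\lambda}_\phi^{(t)})^n$. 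The main obstacle is precisely this last step: the Euler identity only controls the difference $b^{(n)}_j - a^{(n)}_j$, so in principle cancellation could allow $D_n$ to outpace the spectral growth rate of $\check{E}^n$. I would handle this by exploiting the asymptotic linearity of $\phi$ on $\cC^+_{(t)}$: sign-stability forces the normalized dimension vectors $a^{(n)}_j + b^{(n)}_j$ to align along the positive Perron eigenray of $\check{E}^{(t)}_{\phi,\cC^+_{(t)}}$, so that $\|a^{(n)}_j + b^{(n)}_j\|_1$ is controlled by $\|b^{(n)}_j - a^{(n)}_j\|_1$ up to a bounded multiplicative factor, giving the required upper bound and completing the proof.
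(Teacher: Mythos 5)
Your lower bound via Ikeda's $K$-theoretic estimate (\cref{thm:K_lower_bound}), and the reduction of the entropy to a cohomological window supported in degrees $\{-1,0\}$ via \cref{thm:cat_entropy_homology}, both match the paper's argument. But the step you yourself flag as ``the main obstacle'' is a genuine gap, and the fix you sketch does not close it.

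Asymptotic linearity of $\phi$ on $\cC^+_{(t)}$ is a statement about the piecewise-linear action on the tropical $\cX$-variety and the stabilization of its presentation matrix $E_\phi^{(t)}(\phi^n(w))$; it says nothing about how the class $[F_\phi^n S_j^{(t)}]$ distributes between degree $0$ and degree $-1$. There is no reason for the sum $a^{(n)}_j + b^{(n)}_j$ of two a priori independent nonnegative vectors to be comparable to the difference $b^{(n)}_j - a^{(n)}_j = (\check E^{(t)}_{\phi,\cC^+_{(t)}})^n e_j$ unless one of them already vanishes. So the estimate $D_n = O((\check\lambda^{(t)}_\phi)^n)$ is not established.

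The resolution the paper uses is structural and is one step away from what you wrote. Each $F_\phi^n(S'_j)$ (with $S'_j = F'_\phi S_j^{(t)}$) is a \emph{simple} object of the tilted heart $F_\phi^n F'_\phi(\sA^{(t)})$, because it is the image of a simple under an exact equivalence. A simple object $E$ of a left-tilted heart $\sF[1]\star\sT$ sits in a short exact sequence $0 \to F[1] \to E \to T \to 0$ in that abelian category, and simplicity forces $F[1]=0$ or $T=0$. Hence for each $(n,j)$ exactly one of $a^{(n)}_j$, $b^{(n)}_j$ is nonzero, there is no cancellation, and $D_n = \sum_j \|[F_\phi^n S'_j]\|_1 = \|[F_\phi^n F'_\phi|_{\Dfd}]\|$, whose growth rate is $\log\rho(\check E^{(t)}_{\phi,\cC^+_{(t)}})$. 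You should make this observation explicit; it is the crux of the upper bound.

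A secondary caution: you apply \cref{lem:F_phi_inter} to $\phi^n$ with the split generator $G = \bigoplus_i S_i^{(t)}$ directly, which implicitly uses that the \emph{tropical} sign of $\gamma^n$ equals $(\bep^\stab)^n$. That can fail in the first $s$ blocks before the sign has stabilized. The paper sidesteps this by passing to the twisted generator $G' := F'_\phi(G)$, where $F'_\phi$ is the equivalence along the initial segment $\gamma^s$ with its tropical sign; after that, $F_\phi^n F'_\phi$ genuinely corresponds to $\gamma^{s+n}$ with its tropical sign and \cref{lem:intermediate} applies. Since entropy is independent of the split generator, this substitution is harmless, and you should adopt it.
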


From now on, we sometimes omit the super- or subscript $(t)$ for simplicity.
First, we prove the following:

\begin{lem}\label{lem:cat_entropy_Dfd_0}
$h_0(F_\phi|_{\Dfd}) = \log \check{\lambda}_\phi$.
\end{lem}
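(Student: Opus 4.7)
My plan is to establish matching upper and lower bounds on $h_0(F_\phi|_{\Dfd})$, both equal to $\log \check{\lambda}_\phi^{(t)}$. For the lower bound, I would invoke \cref{thm:K_lower_bound}, which gives $h_0(F_\phi|_{\Dfd}) \geq \log \rho([F_\phi|_{\Dfd}])$. Unwinding the definition of $F_\phi$ together with the Keller--Yang description at the end of \cref{subsec:Ginzburg_dga}, the induced automorphism $[F_\phi|_{\Dfd}]$ of $K_0(\Dfd^{(t)}) \cong \bZ^I$, with respect to the basis $\{[S_i^{(t)}]\}_{i \in I}$, coincides up to conjugation by the permutation from the seed isomorphism with $\check{E}_{\phi,\cC^+_{(t)}}^{(t)}$; since conjugation preserves the spectral radius, one obtains $\rho([F_\phi|_{\Dfd}]) = \check{\lambda}_\phi^{(t)}$, yielding the lower bound.

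For the upper bound, I would first apply \cref{thm:cat_entropy_homology} to rewrite
\[
h_0(F_\phi|_{\Dfd}) = \lim_{n \to \infty} \frac{1}{n} \log \sum_{k \in \bZ} \dim H^k(F_\phi^n G),
\]
with $G = \bigoplus_{i \in I} S_i^{(t)}$ and $H^k$ the cohomology with respect to the canonical heart $\sA^{(t)}$. The key step is then to apply \cref{lem:F_phi_inter}(1) not to $\phi$ itself but to each iterate $\phi^n$. This is legitimate because $\gamma^n$ is a sign-stable representation path of $\phi^n$ on $\cC^+_{(t)}$ with stable sign $(\bep^\stab)^n$, and the QP property propagates from $\phi$ to $\phi^n$. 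The lemma then produces a torsion pair $(\sT_n, \sF_n)$ of $\sA^{(t)}$ with $F_\phi^n(\sA^{(t)}) = \sF_n[1] \star \sT_n$. By the classical structure theorem for simples of a left-tilted heart, the simple objects of $\sF_n[1] \star \sT_n$ are either simple objects of $\sT_n$ (as objects of $\sA^{(t)}$) or $[1]$-shifts of simple objects of $\sF_n$; consequently the simple object $F_\phi^n S_i^{(t)}$ of $F_\phi^n(\sA^{(t)})$ has its canonical cohomology concentrated in a single degree. This gives
\[
\sum_k \dim H^k(F_\phi^n S_i^{(t)}) = \sum_{j \in I} \bigl|[F_\phi^n S_i^{(t)}]_j\bigr|,
\]
and summing over $i$ yields $\sum_k \dim H^k(F_\phi^n G) = \sum_{i,j} |(M_n)_{ij}|$, where $M_n$ denotes the matrix of $[F_\phi^n|_{\Dfd}]$.

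The standard Jordan-form estimate $\sum_{i,j} |(M_n)_{ij}| \leq C (\check{\lambda}_\phi^{(t)})^n n^{N-1}$ then delivers $h_0(F_\phi|_{\Dfd}) \leq \log \check{\lambda}_\phi^{(t)}$, completing the proof. The hardest part is the cohomological-concentration step: it is crucial both that \cref{lem:F_phi_inter}(1) applies uniformly to every $\phi^n$ and that each $F_\phi^n S_i^{(t)}$, as a simple object of the tilted heart, is concentrated in a single canonical degree, because this is precisely what ensures that the $\ell^1$-norm of the $K_0$-class coincides with the total cohomological dimension, with no cancellation between positive and negative contributions. Without this, a naive iteration of the one-step dimension bound would force one to replace $\check{E}_\gamma$ by its entrywise absolute value $|\check{E}_\gamma|$, producing only the weaker bound $\log \rho(|\check{E}_\gamma|)$, which in general strictly exceeds $\log \check{\lambda}_\phi^{(t)}$.
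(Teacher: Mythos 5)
Your argument is essentially the paper's: both use \cref{thm:K_lower_bound} for the lower bound, and \cref{thm:cat_entropy_homology} together with the cohomological concentration forced by the tilting structure of the Keller--Yang mutations for the upper bound. The visible difference is how the concentration is secured. The paper twists the split generator, setting $G' := F'_\phi(G)$ for an auxiliary autoequivalence $F'_\phi$ built from the first $s$ steps of $\gamma$ with the \emph{tropical} sign; then the composite $F_\phi^n F'_\phi$ is, up to seed isomorphism, exactly $(F^*_{\gamma^{s+n},\bep^\trop_{\gamma^{s+n}}})^{-1}$, so \cref{lem:intermediate}(1) applies directly to every $n$ and $F_\phi^n(S'_i)$ is simple in a left-tilt of $\sA^{(t)}$. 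You instead apply \cref{lem:F_phi_inter}(1) to $\phi^n$ with path $\gamma^n$ and tacitly use $F^{(t)}_{\phi^n}=(F^{(t)}_\phi)^n$. That identity is true, but it rests on a naturality of the Keller--Yang equivalences under the seed (right-equivalence) isomorphism separating successive factors of $F_\phi^{-1}$, and it deserves a sentence; the paper's $F'_\phi$-twist is engineered precisely so that no such $n$-fold commutation has to be unwound. One small imprecision to fix: the simple objects of $\sF_n[1]\star\sT_n$ need not be simple in $\sT_n$ nor $[1]$-shifts of simples of $\sF_n$—the correct statement is only that a simple of the tilted heart has torsion part $0$ or everything, hence lies entirely in $\sT_n$ or entirely in $\sF_n[1]$, which is all that single-degree concentration requires. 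With these two points tightened, your proof coincides with the paper's in substance; I would not call it a genuinely different route.
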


\begin{proof}
First, the presentation matrix of $[F_\phi|_{\Dfd}]$ coincides with the matrix $\check{E}_{\phi, \cC^+}$ by the following calculation:
\begin{align*}
    [F_\phi|_{\Dfd}] &= P\, [F^\star_{\gamma, \bep^\stab_\gamma}]^{-1}\\
    &= P\, (E_{k_0,\epsilon_0}^\tr E_{k_1,\epsilon_1}^\tr \cdots E_{k_{h-1},\epsilon_{h-1}}^\tr)^{-1}\\
    &= ((P E_{k_{h-1},\epsilon_{h-1}} \cdots E_{k_1,\epsilon_1} E_{k_{0},\epsilon_{0}})^\tr)^{-1}\\
    &= \check{E}_{\phi, \cC^+}.
\end{align*}
Here, $P$ is a presentation matrix of a perumutation of the seed isomorphims between $t'$ and $t$, and $\bep^\stab_\gamma = (\epsilon_0, \epsilon_1, \dots, \epsilon_{h-1})$.
Hence, we have
\begin{align}\label{eq:lower_bound}
    \log \check{\lambda}_\phi \leq h_0(F_\phi|_{\Dfd})
\end{align}
by \cref{thm:K_lower_bound}.
We now take a representation path $\gamma:t \to t'$ which is sign-stable on $\cC^+_{(t)}$ and let $s$ be an nonnegative integer such that if $n \geq s$ then $\bep_\gamma(\phi^n(w)) = \bep^\stab_{\gamma, \cC^+}$ for any $w \in \interior \cC^+_{(t)}$.
Then, we consider the autoequivalence $F'_\phi$ defined by
\begin{align}\label{eq:F'_phi}
    (F'_\phi)^{-1}: \sD^{(t)} \xrightarrow{\sim} \sD^{(t'')} \xrightarrow{F^*_0} \sD^{(t)},
\end{align}
where $F^*_0 = F^*_{\gamma^s, \bep^\trop_{\gamma^s}}$ and $t''$ is the terminal vertex of the path $\gamma^s$.
Since $G^{(t)} = \bigoplus_i S_i^{(t)}$ is a split generator of $\Dfd^{(t)}$ and $F'_\phi$ is an autoequivalence, $G' := F'_\phi(G^{(t)})$ is also a split generator of $\Dfd^{(t)}$.

We are going to consider the cohomology of $F_\phi G'$ associated with the heart $\sA^{(t)}$.
By \cref{lem:F_phi_inter} (1), the cohomology is concentrate to degree $0$ or $-1$.
Hence we have
\begin{align}\label{eq:ent_homo}
    h_T(F_\phi|_{\Dfd}) = \lim_{n \to \infty} \frac{1}{n} \log \big( \dim H^0(F_\phi^n G')  + \dim H^{-1}(F_\phi^n G') e^{T} \big)
\end{align}
by \cref{thm:cat_entropy_homology}.
We put $P_n(T) := \dim H^0(F_\phi^n G')  + \dim H^{-1}(F_\phi^n G') e^{T}$.
In particular, since the objects $F^n_\phi (S'_i)$ again simple, we have
\begin{align*}
    F^n_\phi (S'_i) \in \sA^{(t)} \mbox{ or } \sA^{(t)}[1].
\end{align*}
Here, $S'_i = F'_\phi(S^{(t)}_i)$.
We define $s(n,i) \in \{0, 1\}$ as $F^n_\phi (S'_i) \in \sA^{(t)}[s(n,i)]$.
Then we have 
\begin{align*}
P_n(T) = \sum_{i \in I} \dim H^{-s(n,i)}(S'_i) e^{s(n,i)T}.    
\end{align*}
Moreover we note that $[E] = \sum_n [H^n(E)] \in K_0(\Dfd^{(t)}) \cong K_0(\sA^{(t)})$.
Let $\|\cdot\|$ denotes the $L^1$ norm of $K_0(\Dfd^{(t)})$ or the entrywise $L^1$ norm of matrices.
Then, since $[G^{(t)}] = (1, \dots, 1)^\tr \in K_0(\Dfd^{(t)}) \cong \bZ^I$,
\begin{align*}
   \| [F_\phi^n F'_\phi|_{\Dfd}] \|
   = \sum_{i \in I} \| [F_\phi^n(S'_i)] \|
   &= \sum_{i \in I} \| [H^{-s(n,i)}(F_\phi^n(S'_i))] \|\\
   &= \sum_{i \in I} \dim H^{-s(n,i)}(F_\phi^n(S'_i)) = P_n(0).
\end{align*}
Thus, 
\begin{align*}
    h_0(F_\phi|_{\Dfd})
    &= \lim_{n \to \infty} \frac{1}{n} \log \| [F_\phi^n F'_\phi|_{\Dfd}] \|\\
    &\leq \lim_{n \to \infty} \frac{1}{n}\big( \log\| [F_\phi^n|_{\Dfd}] \| + \log\| [F'_\phi|_{\Dfd}] \| \big)\\
    &= \log \rho([F_\phi|_{\Dfd}]) = \log \check{\lambda}_\phi.
\end{align*}
By combining with \eqref{eq:lower_bound}, we obtain the statement.
\end{proof}

\begin{proof}[Proof of \cref{thm:cat_entropy_Dfd}]
When $T \geq 0$,
\begin{align*}
    P_n(0) \leq P_n(T) \leq P_n(0) e^T,
\end{align*}
and when $T \leq 0$,
\begin{align*}
    P_n(0) e^T \leq P_n(T) \leq P_n(0).
\end{align*}
Therefore, the statement follows from \eqref{eq:ent_homo}.
\end{proof}

\subsection{Computation of the categorcal entropy of \texorpdfstring{$F^{(t)}_\phi|_{\per}$}{F|per}}
First, we give the several properties of the mass growth associated with a co-stability condition.
Let $\tau = (\sB, W)$ be a bounded co-t-structure of a triangulated category $\sD$.
Then one can prove almost every property of the mass function $m_\tau$ in the same way for the stability conditions \cite{Ike} but the next lemma is not obvious:
\begin{lem}
If there is an exact triangle $D \to E \to F \to D[1]$, then
\begin{align*}
    m_\tau(E) \leq m_\tau(D) + m_\tau(F)
\end{align*}
for any $D, E, F \in \sD$.
\end{lem}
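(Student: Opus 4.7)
The plan is to build a split Harder--Narasimhan filtration of $E$ out of given ones for $D$ and $F$, and compare masses along the way. Fix $\epsilon > 0$ and choose split HN filtrations
\[0 = D_0 \to D_1 \to \cdots \to D_a = D, \qquad 0 = F_0 \to F_1 \to \cdots \to F_b = F,\]
with factors $B^D_1,\dots,B^D_a$ and $B^F_1,\dots,B^F_b$ realizing $m_\tau(D)+\epsilon$ and $m_\tau(F)+\epsilon$ respectively.

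First I would pull back the filtration of $F$ along the map $E \to F$. For each $j$, the octahedral axiom applied to the composition $E \to F \to F/F_j$ produces an object $E^{(j)}$ fitting into a triangle $D \to E^{(j)} \to F_j$, with $E^{(0)}=D$ and $E^{(b)}=E$, such that the cone of $E^{(j-1)} \to E^{(j)}$ is $F_j/F_{j-1} = B^F_j$. Concatenating with the filtration of $D$ yields the filtration
\[0 \to D_1 \to \cdots \to D_a = D = E^{(0)} \to E^{(1)} \to \cdots \to E^{(b)} = E,\]
whose successive cones $B^D_1,\dots,B^D_a,B^F_1,\dots,B^F_b$ are split semistable and whose total $W$-length is at most $m_\tau(D)+m_\tau(F)+2\epsilon$.

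The remaining issue --- and the place I expect to encounter the main obstacle --- is that the phases of these cones need not be monotonically increasing, so this is not yet a split HN filtration of $E$. The cleanest route I see is to invoke \cref{lem:split_weight_complex}: passing from the HN filtrations of $D$ and $F$ to their underlying weight decompositions, the lemma provides a weight decomposition of $E$ with componentwise splitting $t^n(E) = t^n(D) \oplus t^n(F)$. Inside the additive category $\sB$, the mass is subadditive on direct sums, because the split HN decomposition of a direct sum groups indecomposables by phase, so the triangle inequality in $\bC$ applied phase-by-phase gives $m_\sB(X \oplus Y) \leq m_\sB(X)+m_\sB(Y)$. Summing over weight degrees yields
\[\sum_n m_\sB(t^n(E)) \leq \sum_n m_\sB(t^n(D)) + \sum_n m_\sB(t^n(F)),\]
and assembling the split HN decompositions of each $t^n(E)$ into a refinement of the weight filtration of $E$ produces a genuine split HN filtration realizing this bound. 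Letting $\epsilon \to 0$ finishes the argument.

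The crux of the argument, and what I expect to be the main technical hurdle, is the compatibility between the notion of mass (defined via split HN filtrations, which are not unique) and weight decompositions: one must verify that the refinement sketched above --- sorting the split semistable summands of each $t^n(E)[-n]$ across all $n$ by phase --- can always be realized as an iterated cone construction, so that the resulting tower is a bona fide split HN filtration in the sense of \eqref{eq:split_HN_filt}. A fallback plan, if this fails, is a direct bubble-sort argument on the filtration obtained in the previous paragraph, using the split HN property in $\sB$ together with the co-t-structure axioms to show that consecutive out-of-order split semistable factors $B,B'$ with $\varphi(B)>\varphi(B')$ satisfy $\Hom_\sD(B',B[1])=0$, so that the intermediate extension splits and the order can be swapped.
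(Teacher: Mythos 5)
Your final argument --- apply \cref{lem:split_weight_complex} to obtain a weight decomposition of $E$ with $t^n(E) = t^n(D) \oplus t^n(F)$, bound $m_\tau$ componentwise on each direct sum, sum over weight degrees, and let $\varepsilon \to 0$ --- is precisely the paper's proof; the octahedral pullback in your first paragraph is an exploratory detour that you correctly abandon. The technical point you flag at the end, namely that refining a weight filtration of $E$ by split HN decompositions of its cones must actually assemble into a bona fide split HN filtration (so that $m_\tau(E) \leq \sum_n m_\tau(t^n(E))$), is real: the paper dispatches it with a one-line assertion that every split HN filtration of $D$ arises by taking split HN filtrations of the $t^{k_i}(D)$, without spelling out the phase bookkeeping, so your caution is well-placed even though the argument goes through.
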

\begin{proof}
First, we note that when we take a weight complex $t^\bullet(D)$ of $D$ associated with some weight decomposition, we have the following sequence of exact triangles:
\begin{equation*}
\begin{tikzcd}[column sep = tiny]
0 \ar[r, equal] & D_0 \ar[rr] && D_1 \ar[rr] \ar[ld] && D_2 \ar[ld] \ar[r] &[2mm] \cdots \ar[r] &[2mm] D_{n-1} \ar[rr] && D_n \ar[r, equal] \ar[ld] & D \\
&& t^{k_1}(D) \ar[lu, dashed] && \ar[lu, dashed] t^{k_2}(D) &&&& \ar[lu, dashed] t^{k_n}(D)
\end{tikzcd}
\end{equation*}
where $k_1 < k_2 < \cdots < k_n$ is the degree of nonzero factors of $t^\bullet(D)$.
All split Harder--Narasimhan filtration of $D$ is obtaind by taking split Harder--Narasimhan filtrations of $t^{k_i}(D)$s.
Thus, for any $\varepsilon > 0$ there is a weight decomposition of $D$ such that
\begin{align*}
    \sum_{n \in \bZ} m_\tau(t^n(D)) < m_\tau(D) + \varepsilon.
\end{align*}
Now, we fix $\varepsilon>0$ and take weight deompositions of $D$ and $F$ such that
\begin{align*}
    \sum_{n \in \bZ} m_\tau(t^n(D)) < m_\tau(D) + \varepsilon, \quad
    \sum_{n \in \bZ} m_\tau(t^n(F)) < m_\tau(F) + \varepsilon,
\end{align*}
and then, we take the weight decomposition of $E$ as \cref{lem:split_weight_complex}.
Then
\begin{align*}
    m_\tau(E)
    &\leq \sum_{n \in \bZ} m_\tau(t^n(E))\\
    &= \sum_{n \in \bZ} m_\tau(t^n(D) \oplus t^n(F))\\
    &= \sum_{n \in \bZ}\big( m_\tau(t^n(D)) + m_\tau(t^n(F)) \big)
    < m_\tau(D) + m_\tau(F) + 2\varepsilon.
\end{align*}
The arbitrariness of $\varepsilon$ implies the result.
\end{proof}

We define the mass growth entropy $h_\tau(F)$ of an endofunctor $F: \sD \to \sD$ associated with a co-stability condition $\tau$ in the same way as the stability conditions:
\begin{align*}
    h_\tau(F) := \sup_{E \in \sD} \Big\{ \limsup_{n \to \infty} \frac{1}{n} \log(m_\tau(F^n E)) \Big\}.
\end{align*}

Then, one can prove the following properties in the same way as \cite{Ike}
\begin{prop}\label{prop:co-t_mass_growth}
Let $\tau$ be a co-t-structure of $\sD$.
Then we have
\begin{align*}
    \rho([F]) \leq h_\tau(F) \leq h_0(F).
\end{align*}
\end{prop}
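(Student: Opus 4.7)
The plan is to adapt the proof of Ikeda's analogous sandwich for (ordinary) stability conditions (see \cite[Theorem~3.5 and Proposition~4.6]{Ike}) to the co-stability setting, making use of the mass triangle inequality just established.

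\textbf{Upper bound} $h_\tau(F) \leq h_0(F)$. First I would observe that $m_\tau(X[k]) = m_\tau(X)$ for every $k \in \bZ$: a split Harder--Narasimhan filtration of $X$ with factors $B_i$ shifts to one of $X[k]$ with factors $B_i[k]$, and $|W([B_i[k]])| = |W([B_i])|$ because $[B_i[k]] = (-1)^k[B_i]$ in $K_0(\sD)$. Now fix a split generator $G$ of $\sD$. For any sequence of exact triangles realising a decomposition of $X$ with $n$ blocks $G[i_1],\dots,G[i_n]$, iterating the triangle inequality of the preceding lemma yields
\[
m_\tau(X) \;\leq\; \sum_{k=1}^n m_\tau(G[i_k]) \;=\; n\cdot m_\tau(G).
\]
Taking the infimum over such decompositions gives $m_\tau(X) \leq \delta_0(G,X)\cdot m_\tau(G)$. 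Applied to $X = F^n E$ and combined with the submultiplicativity $\delta_0(G, F^n E) \leq \delta_0(G, F^n G)\cdot \delta_0(G, E)$ of \cref{lem:complexity}, this yields
\[
\limsup_{n\to\infty}\frac{1}{n}\log m_\tau(F^n E) \;\leq\; h_0(F)
\]
for every $E \in \sD$, and taking the supremum over $E$ proves $h_\tau(F) \leq h_0(F)$.

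\textbf{Lower bound} $\log\rho([F]) \leq h_\tau(F)$. Applying the triangle inequality on $\bC$ to a split Harder--Narasimhan filtration of $E$, one obtains $m_\tau(E) \geq |W([E])|$ after passing to the infimum, so
\[
\limsup_{n\to\infty}\frac{1}{n}\log m_\tau(F^n E) \;\geq\; \limsup_{n\to\infty}\frac{1}{n}\log |W([F]^n[E])|
\]
for every $E$. It then suffices to produce an $E \in \sD$ for which $|W([F]^n[E])|$ grows at rate $\rho([F])^n$. Decomposing $K_0(\sD)\otimes \bC$ into generalized eigenspaces of $[F]$ and letting $V_\rho$ denote the sum of those whose eigenvalues have modulus $\rho([F])$, any integer class whose image in $V_\rho$ is not annihilated by $W$ has this property, since the contribution from $V_\rho$ dominates asymptotically.

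\textbf{Where the difficulty lies.} The delicate point is verifying, in the lower bound, that such an $E$ exists, equivalently that $W$ does not vanish identically on $V_\rho$. In the classical stability condition setting this is absorbed into the support property. In the present framework it has to be extracted from the structure of the co-heart: since the indecomposables $[P_i]$ of $\add\Gamma^{(t)}$ form a $\bZ$-basis of $K_0(\per^{(t)})$ with each $W([P_i]) \in \rH$, and in the applications the dominant eigenvector of the induced map on $K_0(\per^{(t)})$ is realised as a positive combination of the $[P_i]$ via a Perron--Frobenius type argument paralleling \cref{thm:Perron-Frobenius}, the value of $W$ on this eigenvector again lies in $\rH$, hence is nonzero.
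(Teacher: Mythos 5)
The paper itself gives no detailed argument here; it simply asserts the proposition can be proved ``in the same way as \cite{Ike}.'' Your reconstruction of the upper bound $h_\tau(F) \leq h_0(F)$ is sound and does match Ikeda's method: shift-invariance of $m_\tau$, iterating the mass triangle inequality over a tower of triangles realizing a generator decomposition, and then the (sub)multiplicativity of complexity from \cref{lem:complexity}.

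The genuine gap is in the lower bound, and it is the one you flag. Your controlling inequality is $m_\tau(E) \geq |W([E])|$, which forces you to produce $E$ with $|W([F]^n[E])|$ growing at rate $\rho([F])^n$; this fails whenever $W$ kills the dominant generalized eigenspace $V_\rho$ of $[F]$. Your patch --- arguing that the dominant eigenvector is a nonnegative combination of the $[P_i]$ by a Perron--Frobenius argument --- is not a proof of the proposition as stated (it concerns an arbitrary co-stability condition and an arbitrary endofunctor $F$, not only the stable presentation matrix of a sign-stable loop, which is what preserves the cone $\cC^+$), and even within the application it imports a conclusion of \cref{thm:Perron-Frobenius} that is logically downstream of the point you are trying to establish.

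What Ikeda's argument actually uses, and what your proposal omits, is the \emph{support property}: one wants the strictly stronger inequality $\|[E]\| \leq C\, m_\tau(E)$ for a norm $\|\cdot\|$ on $K_0(\sD)\otimes\bR$. With this, $\log\rho([F]) \leq \sup_E \limsup_n \tfrac{1}{n}\log\|[F]^n[E]\| \leq h_\tau(F)$ follows with no concern about the kernel of $W$, because a norm is nonzero on every nonzero subspace, and some class of an object has nonzero projection to $V_\rho$ since such classes generate $K_0(\sD)$. In the co-stability setting the support property is in fact immediate from the very data you invoke: the indecomposables $P_1,\dots,P_N$ of $\sB = \add\Gamma$ form a $\bZ$-basis of $K_0(\sD)$, every split semistable object is a finite sum $\bigoplus_i P_i^{\oplus m_i}$ with all nonzero summands of equal phase, hence $|W(E)| = \sum_i m_i |W(P_i)| \geq \big(\min_i |W(P_i)|\big)\cdot\|[E]\|_1$. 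You assembled exactly these ingredients at the end of your proposal but routed them through the (unjustified in this generality) positivity of a dominant eigenvector rather than through the support property; replacing that final paragraph by the support property estimate closes the gap and brings the proof in line with the approach the paper is implicitly citing.
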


By using them, we prove the following:
\begin{thm}\label{thm:cat_entropy_per}
Let $\phi$, $\gamma$ be as \cref{thm:cat_entropy_Dfd}.
Then, we have
\begin{align*}
    h_T(F_\phi^{(t)}|_{\per}) \leq \log \lambda^{(t)}_\phi.
\end{align*}
for any $T \in \bR$.
Especially, $h_0(F_\phi^{(t)}|_{\per}) = \log \lambda^{(t)}_\phi.$
Here
$\lambda_\phi = \lambda^{(t)}_{\phi, \cC^+_{(t)}}$ denotes the spectral radius of the stable presentation matrix $E^{(t)}_{\phi, \cC^+_{(t)}}$ of $\phi$.
\end{thm}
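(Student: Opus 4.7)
The plan is to mirror the proof of \cref{thm:cat_entropy_Dfd}, with the canonical heart $\sA^{(t)}$ replaced by the canonical co-heart $\add(\Gamma^{(t)})$, cohomological truncation replaced by weight decomposition, and \cref{thm:K_lower_bound} replaced by \cref{prop:co-t_mass_growth}. Dual to the computation at the start of the proof of \cref{lem:cat_entropy_Dfd_0}, using that the presentation matrix of $[F^*_{k,\epsilon}|_\per]$ is $(\check{E}^{(t)}_{k,\epsilon})^\tr$ together with $\check{\check{A}}=A$, one finds $[F^{(t)}_\phi|_\per] = E^{(t)}_{\phi, \cC^+_{(t)}}$. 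In particular $\rho([F_\phi|_\per]) = \lambda_\phi^{(t)}$, so \cref{prop:co-t_mass_growth} yields the lower bound $\log \lambda_\phi^{(t)} \le h_0(F_\phi|_\per)$.

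For the upper bound I imitate the proof of \cref{lem:cat_entropy_Dfd_0}. Fix $s \ge 0$ such that $\bep_\gamma(\phi^n w) = \bep^\stab$ for all $n \ge s$ and $w \in \interior \cC^+_{(t)}$, and let $F'_\phi$ be the autoequivalence in \eqref{eq:F'_phi} defined via $\gamma^s$ with its tropical sign. Then $G' := F'_\phi(\Gamma^{(t)})$ is a split generator of $\per^{(t)}$, and the concatenation identity $\bep^\trop_{\gamma^{n+s}} = \bep^\trop_{\gamma^s} \cdot (\bep^\stab_\gamma)^n$ identifies $F_\phi^n \circ F'_\phi$ with the autoequivalence $F^{(n)}_\phi$ induced by $\gamma^{n+s}$ with its tropical sign. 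Since $F^{(n)}_\phi$ is itself a tropical-sign functor of a representation path of $\phi^{n+s}$, the argument of \cref{lem:F_phi_inter}\,(2)---which rests only on \cref{lem:intermediate}\,(2) and \cref{lem:rigid_index}---applies verbatim and gives $F^{(n)}_\phi(\Gamma^{(t)}) \in \add(\Gamma^{(t)}) \star \add(\Gamma^{(t)}[1])$.

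Consequently $F_\phi^n(G') = F^{(n)}_\phi(\Gamma^{(t)})$ sits in an exact triangle $R^{(n)}_+ \to F_\phi^n(G') \to R^{(n)}_-[1] \to R^{(n)}_+[1]$ with $R^{(n)}_{\pm} \in \add(\Gamma^{(t)})$; by \cref{lem:rigid_index} the multiplicity of $P_j^{(t)}$ in $R^{(n)}_+$ (resp.\ in $R^{(n)}_-$) is the positive (resp.\ negative) part of the $j$-th component of $c^{(n)} := [F_\phi^n F'_\phi|_\per] \cdot [\Gamma^{(t)}] \in K_0(\per^{(t)})$. Applying \cref{lem:complexity} then gives
\[
\delta_T\bigl(\Gamma^{(t)},\, F_\phi^n G'\bigr) \;\le\; \max\{1,e^T\}\,\|c^{(n)}\|_1 \;\le\; C_T\,\bigl\|(E^{(t)}_{\phi,\cC^+_{(t)}})^n\bigr\|
\]
for a constant $C_T$ independent of $n$. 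Combining this with $\delta_T(G', F_\phi^n G') \le \delta_T(G', \Gamma^{(t)}) \cdot \delta_T(\Gamma^{(t)}, F_\phi^n G')$ and Gelfand's formula $\lim_{n \to \infty} \tfrac{1}{n}\log\|E_\phi^n\| = \log \lambda_\phi^{(t)}$ yields $h_T(F_\phi|_\per) \le \log \lambda_\phi^{(t)}$, and together with the lower bound at $T=0$ the asserted equality.

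The main obstacle is the assertion $F^{(n)}_\phi(\Gamma^{(t)}) \in \add(\Gamma^{(t)}) \star \add(\Gamma^{(t)}[1])$: one must verify that $F_\phi^n \circ F'_\phi$ really does agree, on $\per^{(t)}$, with the tropical-sign functor of $\gamma^{n+s}$ (modulo the Ginzburg-dg-algebra isomorphism), and then that the minimal-presentation argument of \cref{lem:F_phi_inter}\,(2) goes through uniformly in $n$; everything after that is a bookkeeping exercise with matrix norms.
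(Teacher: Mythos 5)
Your proposal is correct and follows essentially the same route as the paper: a spectral lower bound via \cref{prop:co-t_mass_growth}, and an upper bound using that $F_\phi^n$ applied to a suitable generator admits a two-term resolution by $\add(\Gamma^{(t)})$, followed by the complexity estimates of \cref{lem:complexity} and the spectral-radius/Gelfand limit.

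The one place where you are more careful than the paper is exactly the point you flag as the ``main obstacle.'' The published proof invokes \cref{lem:F_phi_inter}\,(2) to conclude $F_\phi(P'_i)\in\add(\Gamma^{(t)})\star\add(\Gamma^{(t)}[1])$, but that lemma is stated for $F_\phi(\Gamma^{(t)})$, not for $F_\phi$ applied to the twisted generator $P'_i=F'_\phi(P_i)$; moreover the subsequent inequality chain silently needs the stronger, uniform statement that $F_\phi^n(P'_i)$ lies in $\add(\Gamma^{(t)})\star\add(\Gamma^{(t)}[1])$ for \emph{every} $n$. Your resolution — that the concatenation $\bep^\trop_{\gamma^{s+n}}=(\bep^\trop_{\gamma^s},(\bep^\stab_\gamma)^n)$ identifies $F_\phi^n\circ F'_\phi$ on $\per^{(t)}$ with the inverse of the tropical-sign functor $F^*_{\gamma^{s+n},\bep^\trop_{\gamma^{s+n}}}$ (modulo the Ginzburg dg-algebra isomorphism), so that \cref{lem:intermediate}\,(2) and the minimal-presentation argument of \cref{lem:F_phi_inter}\,(2) apply to it for each $n$ — is precisely what is needed, and is correct. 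The concatenation identity holds because for $w\in\interior\cC^+_{(t)}$ and $j\ge s$ one has $\bep_\gamma(\phi^j w)=\bep^\stab_\gamma$ by the choice of $s$, while $\bep_{\gamma^s}(w)=\bep^\trop_{\gamma^s}$. After that, your bound $\delta_T(\Gamma^{(t)},F_\phi^n G')\le \max\{1,e^T\}\,\|c^{(n)}\|_1\le C_T\,\|(E^{(t)}_{\phi,\cC^+_{(t)}})^n\|$ together with submultiplicativity of $\delta_T$ and Gelfand's formula reproduces the paper's case split ($T\ge 0$ vs.\ $T\le 0$) in a single line; the two arguments are equivalent.
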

\begin{proof}
Since the presentation matrix of $[F_\phi|_\per]$ coincides with the presentation matrix $E_{\phi, \cC^+}$, we have
\begin{align*}
    {\lambda}_{\phi} \leq h_0(F_\phi|_\per)
\end{align*}
by \cref{prop:co-t_mass_growth}.
On the other hand, we have
\begin{align*}
    F_\phi(P'_i) \in \add(\Gamma^{(t)}) \star \add(\Gamma^{(t)}[1])
\end{align*}
by \cref{lem:F_phi_inter} (2).
Here $P'_i := F'_\phi(P_i)$ and the autoequivalence $F'_\phi$ is defined in the proof of \cref{lem:cat_entropy_Dfd_0}.
Then, there is an exact triangle
\begin{align}\label{eq:min_pres}
    T_1 \to T_0 \to F_\phi(P'_i) \to T_1[1]
\end{align}
in $\per^{(t)}$ such that $T_0, T_1 \in \add(\Gamma^{(t)})$ and they have no direct summands in common by \cite[Lemma 3.5]{Plaa}.
By taking some direct sums \eqref{eq:min_pres} with the trivial exact triangles of the following forms
\begin{align*}
    0 \to P \to P \to 0 \quad \mbox{or} \quad
    P[-1] \to 0 \to P \to P
\end{align*}
for some $P \in \add(\Gamma^{(t)})$, we obtain
\begin{align}\label{eq:good_pres}
    \Gamma^{\oplus \| [T_1] \|} \to \Gamma^{\oplus \| [T_0] \|} \to F_\phi(P'_i) \oplus X \to \Gamma^{\oplus \| [T_1] \|}[1]
\end{align}
for some $X \in \add(\Gamma)$.
Here $\| \cdot \|$ denotes the $L^1$ norm on $K_0(\per^{(t)})$.
Thus we get
\begin{align*}
    \delta_0(\Gamma, F_\phi^n(P'_i)) \leq \| [F_\phi(P'_i)] \|.
\end{align*}
Let we put $\Gamma' = F'_\phi(\Gamma^{(t)})$.
Then, by \cref{lem:complexity},
\begin{align*}
    \delta_0(\Gamma', F_\phi^n(\Gamma'))
    &\leq \delta_0(\Gamma', \Gamma)\delta_0(\Gamma, F_\phi^n(\Gamma'))\\
    &\leq \delta_0(\Gamma', \Gamma) \Big( \sum_{i \in I} \delta_0(\Gamma, F_\phi^n(P'_i)) \Big)\\
    &\leq \delta_0(\Gamma', \Gamma) \| [F_\phi^n F'_\phi] \|\\
    &\leq \delta_0(\Gamma', \Gamma) \| [F_\phi^n] \| \| [F'_\phi] \|
\end{align*}
Since $\Gamma'$ is a split generator of $\per^{(t)}$, this inequality implies
\begin{align*}
    h_0(F_\phi|_\per) &\leq \log \lambda_\phi.
\end{align*}
Also, we have
\begin{align*}
    \delta_T(\Gamma', F_\phi^n(\Gamma')) \leq
    \begin{cases}
    \delta_T(\Gamma', \Gamma) \| [F_\phi^n] \| \| [F'_\phi] \| e^T & \mbox{if } T \geq 0,\\
    \delta_T(\Gamma', \Gamma) \| [F_\phi^n] \| \| [F'_\phi] \| & \mbox{if } T \leq 0.\\
    \end{cases}
\end{align*}
By taking a limit $n \to \infty$, we obtain the desired inequality.
\end{proof}

\begin{rmk}
\begin{enumerate}
\item The reason why we did not use the growth of extensions \cite[Theorem 2.6]{DHKK} is the Ginzburg dg algebra is not compact in general.
\item The perfect derived category $\per(\Gamma)$ of the Ginzburg dg algebra $\Gamma$ has a t-structure induced by a canonical t-structure of the whole derived category $\sD(\Gamma)$ \cite{Ami}.
However, this t-structure is not bounded.
\item Let us define the mass $m_{\tau, T}(E)$ with a parameter $T \in \bR$ with respect to a co-stability condition $\tau$
by mimicking \eqref{eq:def_mass_with_param}.
Recall that the split Harder--Narasimhan filtration is not unique, so we take an infimum to define the mass $m_\tau(E)$, and thus so is $m_{\tau, T}(E)$.
It makes difficult to estimate the entropy $h_{\tau, T}(F)$ from below for a general $T \in \bR$.
\end{enumerate}
\end{rmk}

\section{Pseudo-Anosovness of $F_\phi$}
\subsection{Pseudo-Anosov autoequivalence}
Let $\sD$ be a triangulated category and assume $\Stab(\sD) \neq 0$.
First, we recall some fundamental properties of the mass growths.
\begin{defi}[{\cite{FFHKL}}]
Let $\sigma$ be a stability condition of $\sD$, $F: \sD \to \sD$ be an exact endofunctor and $E \in \sD$.
The \emph{mass growth of $E$ with respect to $\sigma$ and $F$} is defined to be
\begin{align*}
    h_{\sigma, T}(F, E) := \limsup_{n\to \infty} \frac{1}{n} \log (m_{\sigma, T}(F^n E)),
\end{align*}
for $T \in \bR$.
\end{defi}

\begin{prop}[{\cite[Proposition 3.10, Theorem 3.14]{Ike}}]\label{prop:mass_properties}
The notation as above.
\begin{enumerate}
    \item If $\sigma'$ is a stability condition contained in the same connected component with $\sigma$, then
    \begin{align*}
        h_{\sigma, T}(F, E) = h_{\sigma', T}(F, E).
    \end{align*}
    \item If $\sD$ has a split generator $G$ and the heart of $\sigma$ is algebraic, then
    \begin{align*}
        h_{\sigma, T}(F) = h_{\sigma, T}(F, G).
    \end{align*}
\end{enumerate}
\end{prop}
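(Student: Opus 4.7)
The plan is to treat the two parts separately, since they rely on different mechanisms. Part (1) is a metric/continuity statement, while Part (2) is an application of the generator property of $G$ combined with the triangle inequality for mass.

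For part (1), the key is the generalized metric $d$ on $\Stab(\sD)$ defined earlier. Since $\sigma$ and $\sigma'$ lie in the same connected component, $d(\sigma,\sigma') < \infty$. Let $D := d(\sigma,\sigma')$. From the definition of $d$, for every nonzero $E \in \sD$ one has $|\log(m_\sigma(E)/m_{\sigma'}(E))| \leq D$ and $|\varphi^\pm_\sigma(E) - \varphi^\pm_{\sigma'}(E)| \leq D$. I would first upgrade this to a comparison of the parametrized masses: by writing out $m_{\sigma,T}$ using the Harder--Narasimhan factors and applying both bounds factor-by-factor, one obtains an inequality of the form
\begin{align*}
    e^{-(1+|T|)D}\, m_{\sigma, T}(E) \;\leq\; m_{\sigma', T}(E) \;\leq\; e^{(1+|T|)D}\, m_{\sigma, T}(E).
\end{align*}
Applying this to $F^n E$, taking $\tfrac{1}{n}\log$, and passing to $\limsup$, the constant term disappears and the two mass growths coincide. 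The only subtlety is that the HN filtrations of $E$ with respect to $\sigma$ and $\sigma'$ are different, so the pointwise comparison of phases and central charges has to be funneled through $\varphi^+$ and $\varphi^-$; this is precisely what the $d$-metric encodes.

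For part (2), the inequality $h_{\sigma,T}(F,G) \leq h_{\sigma,T}(F)$ is immediate from the supremum. For the reverse inequality, I would exploit the split generator property: every nonzero $E \in \sD$ lies in the thick closure $\langle G \rangle$, so $\delta_T(G,E) < \infty$. Since $F$ is exact, the sequence of cones realising $\delta_T(G,E)$ pushes forward to one realising $E \in \langle G \rangle$ with the \emph{same} data applied to $F^n G$ and $F^n E$, giving $\delta_T(F^n G, F^n E) \leq \delta_T(G,E)$. The triangle inequality for mass along an exact triangle $A \to B \to C \to A[1]$ gives $m_{\sigma,T}(B) \leq m_{\sigma,T}(A) + m_{\sigma,T}(C)$, and a shift satisfies $m_{\sigma,T}(A[k]) = e^{kT} m_{\sigma,T}(A)$; iterating these along a presentation of $F^n E$ out of $F^n G$ yields
\begin{align*}
    m_{\sigma,T}(F^n E) \;\leq\; \delta_T(G,E)\, m_{\sigma,T}(F^n G).
\end{align*}
Taking $\tfrac{1}{n}\log$ and $\limsup$ then produces $h_{\sigma,T}(F,E) \leq h_{\sigma,T}(F,G)$ as desired.

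The main obstacle I expect is the use of the algebraicity hypothesis on the heart. The construction above only needs $G$ to be a split generator in order to get $\delta_T(G,E) < \infty$; where algebraicity enters is in ensuring that masses are uniformly controlled in terms of Jordan--H\"older multiplicities over the finitely many simples, so that mass is additive on direct summands and behaves compatibly with complexity (which is defined modulo summands). Concretely, one has to be careful that in passing from $\delta_T(G, E)$, which allows a direct summand $Y'$ in its defining diagram, to a genuine bound on $m_{\sigma,T}$, no uncontrolled contribution from $Y'$ sneaks in. Algebraicity of $\sA$ (finite-length with finitely many simples) provides precisely the additive control needed to rule this out. After these pieces are in place, the conclusion of (2) follows from the standard limit argument, in parallel with the proof of independence in (1).
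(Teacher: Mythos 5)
Note first that the paper cites this proposition from Ikeda and gives no proof of its own; your sketch can therefore only be assessed against what the cited argument must supply. Part (2) has the right skeleton: present $F^nE\oplus F^nY'$ as an iterated cone of shifts of $F^nG$, bound its parametrized mass using $m_{\sigma,T}(B)\leq m_{\sigma,T}(A)+m_{\sigma,T}(C)$ and $m_{\sigma,T}(A[k])=e^{kT}m_{\sigma,T}(A)$, and discard the summand $Y'$ via additivity of mass on direct sums. But the triangle inequality for $m_{\sigma,T}$ with $T\neq 0$ is itself a genuine lemma, not a formal consequence of the Bridgeland case $T=0$: one has to show by a rearrangement argument that sorting an arbitrary filtration with $\sigma$-semistable factors into decreasing phase order never increases $\sum_i|Z(B_i)|e^{\varphi_i T}$. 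You invoke it in both parts without proof; it is the one nontrivial ingredient holding the whole argument up, and without it the bootstrap does not close.

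Part (1) has a more concrete gap. The $d$-bounds you quote control $\varphi^\pm_\sigma(E)$ versus $\varphi^\pm_{\sigma'}(E)$ and $m_\sigma(E)$ versus $m_{\sigma'}(E)$ for $E$ itself. If, as your phrasing suggests, the parametrized mass is ``funneled through $\varphi^+$ and $\varphi^-$'' of $E$, the bound you get for $T\geq 0$ is $m_{\sigma',T}(E)\leq e^{\varphi^+_{\sigma'}(E)T}m_{\sigma'}(E)$, while the best lower bound on the other side is $m_{\sigma,T}(E)\geq e^{\varphi^-_\sigma(E)T}m_\sigma(E)$; combining them leaves a spurious factor $e^{T(\varphi^+_\sigma(E)-\varphi^-_\sigma(E))}$ which is not uniform in $E$, and for $E=F^nG$ the phase width $\varphi^+_\sigma(F^nG)-\varphi^-_\sigma(F^nG)$ generally grows without bound in $n$, so it does not vanish under $\tfrac{1}{n}\log(\cdot)$ and the $\limsup$. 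The correct route is to apply the $d$-bounds not to $E$ but to each $\sigma$-HN factor $A_i$ of $E$, for which $\varphi^+_\sigma(A_i)=\varphi^-_\sigma(A_i)=\varphi_i$, so the width is zero and one gets $m_{\sigma',T}(A_i)\leq e^{(1+|T|)D}|Z_\sigma(A_i)|e^{\varphi_i T}$; summing over $i$ then requires precisely the triangle inequality from Part (2), since the filtration of $E$ obtained by refining the $\sigma$-HN filtration by the $\sigma'$-HN filtrations of the $A_i$ is \emph{not} $\sigma'$-ordered. Only after these two repairs does the claimed estimate $m_{\sigma',T}(E)\leq e^{(1+|T|)D}m_{\sigma,T}(E)$ actually follow.
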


In what follows, we fix a connected component $\Stab^\dagger(\sD)$ of $\Stab(\sD)$.
Then, thanks to \cref{prop:mass_properties} (1), we do not have to express the stability condition $\sigma$ in $h_{\sigma, T}$ explicitly.
So we will refer to $h_{\sigma, 0}(F, E)$ for $\sigma \in \Stab^\dagger(\sD)$ as $h^\dagger_F(E)$.
When $\sD$ has a split generator $G$, \cref{prop:mass_properties} (2) tells us that $h^\dagger_F(E) \leq h^\dagger_F(G)$ for any nonzero object $E \in \sD$.

\begin{defi}[pseudo-Anosov autoequivalence, \cite{FFHKL}]
An autoequivalence $F$ of a triangulated category $\sD$ is \emph{pseduo-Anosov} if there is a constant $\lambda_F >1$, called \emph{stretch factor} of $F$, such that
\begin{align*}
    h^\dagger_F(E) = \log \lambda_F
\end{align*}
for any nonzero object $E \in \sD$.
\end{defi}

The following is the main theorem of this section.

\begin{thm}\label{thm:pA_auto}
If a QP mutation loop $\phi$ is uniformly sign-stable and
has a North dynamics on $\bS\cC^+_{(t)}$ for some $t \in \bT_I$ with an $\cX$-filling attracting point $p^+_\phi$ whose stretch factor $\lambda_\phi^{(t)}$ is larger than $1$,
then $F_\phi^{(t)}$ is pseudo-Anosov with a stretch factor $\lambda_{\phi}^{(t)}$.
\end{thm}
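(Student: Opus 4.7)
The plan is to verify the two-sided bound $h^\dagger_{F_\phi^{(t)}}(E) = \log \lambda_\phi^{(t)}$ for every nonzero object $E$ in the relevant triangulated category. The natural setting is $\Dfd^{(t)}$, which admits a split generator $G^{(t)} = \bigoplus_{i \in I} S_i^{(t)}$ and whose canonical heart $\sA^{(t)}$ is of finite length, hence algebraic. The upper bound $h^\dagger_{F_\phi^{(t)}}(E) \leq \log \lambda_\phi^{(t)}$ is essentially free from the previous sections: for any $\sigma \in \Stab^\dagger(\Dfd^{(t)})$ with heart $\sA^{(t)}$, Proposition~5.2(2) yields $h^\dagger_{F_\phi^{(t)}}(G^{(t)}) = h_{\sigma,0}(F_\phi^{(t)})$, and by Ikeda's theorem together with \cref{thm:cat_entropy_Dfd} this equals $h_0(F_\phi^{(t)}|_{\Dfd}) = \log \check\lambda_\phi^{(t)} = \log \lambda_\phi^{(t)}$ under \cref{p:spec_same}. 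The routine inequality $h^\dagger_{F_\phi^{(t)}}(E) \leq h^\dagger_{F_\phi^{(t)}}(G^{(t)})$ for $E$ in the thick closure of $G^{(t)}$ finishes this direction.

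For the matching lower bound, my plan is to build an \emph{eigenstability condition} $\sigma_\phi \in \Stab^\dagger(\Dfd^{(t)})$ for $F_\phi^{(t)}$ in the sense of \cite{FFHKL}, i.e.\ a stability condition satisfying $F_\phi^{(t)} \cdot \sigma_\phi = \sigma_\phi \cdot g$ for some hyperbolic $g \in \widetilde{GL}^+(2,\bR)$ with dilation factor $\lambda_\phi^{(t)}$. The ingredients: the hypotheses combined with \cref{thm:Perron-Frobenius} yield a positive eigenvector $w^+ \in \interior \cC^+_{(t)} \subset M^{(t)} \otimes \bR$ of $E_{\phi, \cC^+}^{(t)}$ for the simple dominant eigenvalue $\lambda_\phi^{(t)} > 1$; the $\cX$-filling property says that all the coordinates $x_i^{(t)}(w^+) = \langle e_i^{(t)}, w^+\rangle$ are strictly positive. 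Under \cref{p:spec_same} the dual matrix $\check E_{\phi, \cC^+}^{(t)}$, which presents $[F_\phi^{(t)}|_{\Dfd}]$ on $K_0(\Dfd^{(t)}) \otimes \bR \cong N^{(t)} \otimes \bR$, inherits $\lambda_\phi^{(t)}$ as its simple spectral radius and also has a reciprocal eigenvalue $(\lambda_\phi^{(t)})^{-1}$; using these two eigen-directions one assembles a central charge $Z : K_0(\Dfd^{(t)}) \to \bC$ satisfying the intertwining $Z \circ [F_\phi^{(t)}] = \lambda_\phi^{(t)} \cdot Z$. The $\cX$-filling hypothesis is precisely what makes $Z$ send each $[S_i^{(t)}]$ into $\rH$, so that the pair $\sigma_\phi = (\sA^{(t)}, Z)$ is a bona fide stability function on $\sA^{(t)}$, and the uniform sign-stability ensures that the tilts $F_\phi^n(\sA^{(t)})$ computed via \cref{lem:F_phi_inter} stabilise into a pattern compatible with the $\bC$-rotation part of $g$. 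Once $\sigma_\phi$ is constructed, for any $E$ with $Z([E]) \neq 0$ the estimate $m_{\sigma_\phi}(F_\phi^n E) \geq |Z(F_\phi^n E)| = (\lambda_\phi^{(t)})^n |Z(E)|$ gives the lower bound directly; for $E$ with vanishing central charge one passes to the Harder--Narasimhan filtration of \cref{prop:HN_filt} and applies the same estimate to the semistable factors $A_j$, each of which has $Z(A_j) \neq 0$ by semistability, yielding $m_{\sigma_\phi}(F_\phi^n E) \geq \max_j (\lambda_\phi^{(t)})^n |Z(A_j)|$.

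The main obstacle is the rigorous construction of $\sigma_\phi$ and verification that it belongs to $\Stab^\dagger(\Dfd^{(t)})$. Checking the Harder--Narasimhan property on $\sA^{(t)}$ is automatic from finite length, but the support property requires a quantitative Perron--Frobenius estimate controlling $|Z(A)|$ from below uniformly over all semistable $A$, which should follow from the strict positivity of the eigenvector $w^+$ on the interior of $\cC^+_{(t)}$ together with uniform sign-stability. A second subtlety is verifying that the intertwining indeed involves the dilation factor $\lambda_\phi^{(t)}$ rather than its reciprocal: because $\check{E}$ acts on $N^{(t)}$ as the inverse transpose of $E$ on $M^{(t)}$, the correct pairing uses the eigenvector for $1/\lambda_\phi^{(t)}$ in $M^{(t)}$, whose existence and appropriate non-degeneracy on simples is guaranteed by \cref{p:spec_same} (palindromicity of the characteristic polynomial) combined again with $\cX$-fillingness. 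Once these two points are in place, Proposition~5.2(1) transports the mass-growth identity from $\sigma_\phi$ to any other stability condition in the same connected component, giving $h^\dagger_{F_\phi^{(t)}}(E) = \log \lambda_\phi^{(t)}$ for all nonzero $E$.
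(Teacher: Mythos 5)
Your high-level strategy matches the paper's: reduce to constructing a stability condition $\sigma_\phi$ satisfying $F_\phi^{(t)}\cdot\sigma_\phi = \sigma_\phi\cdot g$ for a suitable $g$ in the $\bC$-subgroup (or in $\widetilde{GL}^+(2,\bR)$), then invoke the FFHKL criterion (\cref{lem:Fsigma=sigmalambda}). However, your direct construction of $\sigma_\phi$ via Perron--Frobenius eigenvectors has a genuine gap.

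The crucial problem: you propose $\sigma_\phi = (\sA^{(t)}, Z)$ with the \emph{canonical heart} $\sA^{(t)}$. But the intertwining relation $F_\phi^{(t)}\cdot\sigma_\phi = \sigma_\phi\cdot\frac{\sqrt{-1}}{\pi}\log\lambda^{(t)}_\phi$ (which is what the paper actually establishes, with $\theta(g) = 0$, so no rotation) forces $F_\phi^{(t)}(\sA_\phi) = \sA_\phi$. By \cref{lem:F_phi_inter}(1), $F_\phi^{(t)}(\sA^{(t)})$ is a \emph{nontrivial left tilt} of $\sA^{(t)}$, so $\sA^{(t)}$ is never fixed by $F_\phi^{(t)}$ — your candidate heart simply cannot work. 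The correct heart is the ``infinite tilt'' limit of $F_\phi^n F'_\phi(\sA^{(t)})$, and this is precisely why the paper's \cref{prop:N_dyn_stab} constructs $\sigma_\phi$ as the limit of the Cauchy sequence $\sigma_n = F_\phi^n F'_\phi\cdot\sigma_0\cdot\frac{\log\|\phi^{s+n}(\ell^+_{(t)})\|}{\pi\sqrt{-1}}$ and appeals to Woolf's completeness theorem, rather than writing down an explicit heart. Your invocation of ``finite length $\Rightarrow$ HN automatic'' applies to $\sA^{(t)}$ but not to the limit heart, whose finiteness is exactly what is at issue; the completeness of $\Stab^\dagger(\sD)$ is the device that circumvents this. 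Your closing sentence (``the uniform sign-stability ensures that the tilts $F_\phi^n(\sA^{(t)})$ stabilise into a pattern compatible with the $\bC$-rotation part of $g$'') gestures at the right idea but does not produce an argument — and in fact there is no rotation part, since the paper uses a purely imaginary $g$.

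Two secondary points. First, the $\cX$-filling hypothesis only says $x_i^{(t)}(p^+_\phi) \neq 0$ for all $i$, not $x_i^{(t)}(p^+_\phi) > 0$; so the Perron--Frobenius eigenvector need not lie in $\interior\cC^+_{(t)}$, and $Z([S_i^{(t)}])$ need not lie in $\rH$ for the canonical simples — it is the \emph{twisted simples} $S_i' = F'_\phi(S_i^{(t)})$ whose $Z$-values are controlled once the signs stabilise, which again points toward tilted hearts. Second, your upper-bound and dual-eigenvector arguments rely on \cref{p:spec_same}, but \cref{thm:pA_auto} is proved in the paper unconditionally; the stretch factor in the $F_\phi\cdot\sigma_\phi = \sigma_\phi\cdot g$ relation is obtained from the dynamical scaling $\|\phi^{n+1}(\ell^+_{(t)})\|/\|\phi^n(\ell^+_{(t)})\| \to \lambda^{(t)}_\phi$, not from identifying spectra of $E$ and $\check E$. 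You do not need the conjecture here.
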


\begin{prop}\label{prop:N_dyn_stab}
If $\phi$ satisfies the same assumption as \cref{thm:pA_auto}.
Then, there is a stability condition $\sigma_\phi^{(t)} \in \Stab(\Dfd^{(t)})$ such that 
\begin{align*}
    F_\phi^{(t)} \cdot \sigma_\phi^{(t)} = \sigma_\phi^{(t)} \cdot \frac{\sqrt{-1}}{\pi}\log \lambda^{(t)}_\phi.
\end{align*}
\end{prop}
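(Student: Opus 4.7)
The plan is to construct $\sigma_\phi^{(t)}$ as a dynamical limit of iterated tilts of the canonical stability condition, with central charge prescribed by the Perron--Frobenius eigenvector of the stable presentation matrix $E_\phi^{(t)}$ via the Euler pairing.

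First I would apply \cref{thm:Perron-Frobenius} to $E_\phi^{(t)}$. The combination of uniform sign-stability, $\lambda_\phi^{(t)} > 1$, and the $\cX$-filling attracting point $p^+_\phi$ produces a positive eigenvector $v^+ \in \interior \cC^+_{(t)}$ of eigenvalue $\lambda_\phi^{(t)}$, whose cluster coordinates $x_i^{(t)}(v^+)$ are all strictly positive. Via the perfect Euler pairing between $K_0(\per^{(t)})$ and $K_0(\Dfd^{(t)})$, set
\[
Z := -\langle v^+, - \rangle : K_0(\Dfd^{(t)}) \to \bR \subset \bC,
\]
so that $Z(S_i^{(t)}) = -x_i^{(t)}(v^+) \in \rH$ lies on the negative real axis. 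Since $[F_\phi|_\per]$ and $[F_\phi^{-1}|_\Dfd]$ are mutual adjoints with respect to the Euler pairing, the Perron identity $E_\phi^{(t)} v^+ = \lambda_\phi^{(t)} v^+$ translates into the central-charge relation $Z \circ [F_\phi^{-1}] = \lambda_\phi^{(t)} Z$; this matches the scaling $|\overline{g}^{\,-1}| = \lambda_\phi^{(t)}$ produced by the right $\bC$-action of $g := \sqrt{-1}\log\lambda_\phi^{(t)}/\pi$, for which $\theta(g) = \mathrm{Re}\, g = 0$ so that no phase rotation is introduced.

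Next, $\sigma_0 := (\sA^{(t)}, Z)$ is a stability condition on $\Dfd^{(t)}$: the Harder--Narasimhan property is automatic because $\sA^{(t)}$ is of finite length, and the support property follows from $\min_i x_i^{(t)}(v^+) > 0$ via the uniform bound $\|[E]\| \le \bigl(\min_i x_i^{(t)}(v^+)\bigr)^{-1}|Z(E)|$ on every semistable $E$. Consider the combined action $\Psi(\sigma) := F_\phi^{(t)} \cdot \sigma \cdot g^{-1}$, so that fixed points of $\Psi$ are precisely the stability conditions satisfying the desired identity. Since $\Psi$ fixes the central charge $Z$, the iterates $\sigma_n := \Psi^n(\sigma_0)$ all share central charge $Z$, while their hearts evolve as the iterated left tilts $(F_\phi^{(t)})^n \sA^{(t)}$ controlled by \cref{lem:F_phi_inter}~(1). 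Using completeness of $(\Stab^\dagger(\Dfd^{(t)}), d)$ \cite{Wo} together with the contraction provided by the North dynamics at rate $\lambda_\phi^{(t)} > 1$, one would extract a limit $\sigma_\phi^{(t)} := \lim_n \sigma_n$ and then conclude $\Psi(\sigma_\phi^{(t)}) = \sigma_\phi^{(t)}$ by continuity of the left and right actions.

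The hard part is the Cauchy property of $\{\sigma_n\}$. Although the central charges coincide, Bridgeland's metric also measures differences of Harder--Narasimhan phases and mass ratios, which can shift under each tilt; moreover the initial slicing with all simples of $\sA^{(t)}$ placed at the single phase $1$ is degenerate, so a generic perturbation of $\sigma_0$ within its connected component of $\Stab^\dagger(\Dfd^{(t)})$ may be required before iterating, after which the asymptotic central charge is restored by $\Psi$. The crux is to transfer the dynamical convergence $\phi^n(w) \to p^+_\phi$ on the cluster $\cX$-variety (guaranteed by $\cX$-fillingness of $p^+_\phi$ and the North dynamics) into uniform convergence of $\varphi^\pm_{\sigma_n}(E)$ and $m_{\sigma_n}(E)/m_{\sigma_m}(E)$ for every $E \in \Dfd^{(t)}$. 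Concretely, iterating the inclusion $F_\phi^{(t)} \sA^{(t)} \subset \sA^{(t)} \star \sA^{(t)}[1]$ and exploiting the strict inequality $\lambda_\phi^{(t)} > 1$ should yield a categorical analogue of the Perron--Frobenius contraction on the space of slicings, which constitutes the technical heart of the argument.
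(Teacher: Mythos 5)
Your proposal differs from the paper's proof in a way that both illuminates and breaks.

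The central insight you correctly identify is the equivariance forced by the Perron--Frobenius eigenvector: defining $Z = -\langle v^+, -\rangle$ with $v^+ \in \interior\cC^+_{(t)}$ the dominant eigenvector, the duality $[F_\phi|_\per]^\vee = [F_\phi^{-1}|_\Dfd]$ under the Euler pairing gives $Z\circ[F_\phi^{-1}] = \lambda_\phi Z$, so the combined action $\Psi = F_\phi\cdot(-)\cdot g^{-1}$ fixes $Z$. This is a correct consistency check on the form of the answer. But then you try to build $\sigma_\phi$ by iterating $\Psi$ starting from $\sigma_0 = (\sA^{(t)}, Z)$, and that is where the argument stalls.

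The chosen $Z$ is purely real, so \emph{every} heart $F_\phi^n\sA^{(t)}$ sees all its simples at phase $1$. Since the central charges of all iterates $\sigma_n$ are identical, there is no dynamical quantity in the central charge direction to which the North dynamics argument can be applied, and the convergence of hearts alone has no obvious mechanism. You flag the Cauchy property as ``the hard part'' and propose perturbing $\sigma_0$ to a generic stability condition, but this is incoherent with the rest of the construction: a perturbed $Z'$ no longer satisfies $Z'\circ[F_\phi^{-1}] = \lambda_\phi Z'$, so the iterates under $\Psi$ do \emph{not} share a central charge, and the invariance of the central charge that you rely on is lost. So the workaround cancels the premise it was meant to repair.

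The paper's construction avoids this impasse by \emph{not} fixing the central charge in advance. It takes the na\"ive $Z_0$ with $Z_0(S_i)=\sqrt{-1}$ (also degenerate), then forms
\[
\sigma_n := F_\phi^n F'_\phi \cdot \sigma_0 \cdot \frac{\log\|\phi^{s+n}(\ell^+_{(t)})\|}{\pi\sqrt{-1}},
\]
so the hearts and the central charges \emph{both} evolve, the imaginary-part map $\Pi$ sends $\sigma_n$ to the \emph{normalized} $\phi$-orbit of $\ell^+_{(t)} \in \cX_{(t)}(\bR^\trop)$, and the Cauchy property is proved in two concrete pieces: the possible phase jumps $|\varphi^\pm_{\sigma_m}(S_i) - \varphi^\pm_{\sigma_n}(S_i)| \in \{0,1\}$ are eventually $0$ because $\cX$-fillingness of $p^+_\phi$ stabilizes all the signs $\sgn(x_i^{(t)}(\phi^n(\ell^+)))$; and the normalized orbit is Cauchy by \cite[Theorem~5.4]{TS}. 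Your version identifies the correct fixed form of the central charge in advance but discards the dynamical bookkeeping that is the actual substance of the paper's proof. To salvage your route you would need a genuine replacement for these two convergence arguments, and the proposal does not supply one.
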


\begin{proof}
Let $\sigma_0 = (\cA^{(t)}, Z_0)$ be the stability condition on $\Dfd^{(t)}$ so that $Z_0(S_i) := \sqrt{-1}$.
Then, we define the sequence of stability conditions $(\sigma_n)_{n \geq 1}$ as
\begin{align*}
    \sigma_n := F_\phi^n F'_\phi \cdot \sigma_0 \cdot \frac{\log \| \phi^{s+n}(\ell_{(t)}^+) \|}{\pi \sqrt{-1}}.
\end{align*}
Here $F'_\phi$ is an automorphism of $\Dfd^{(t)}$ defined as \eqref{eq:F'_phi}, $\ell_{(t)}^+ \in \cX_{(t)}(\bR^\trop)$ is a point defined by $x^{(t)}_i(\ell_{(t)}^+)=1$ for all $i \in I$ and $\|\cdot\|$ is the Euclidean norm of $M^{(t)} \otimes \bR \cong \cX_{(t)}(\bR^\trop)$.
We note that there is a map
\begin{align*}
    \Pi: \Stab(\Dfd^{(t)}) \to M^{(t)} \otimes \bR \cong \cX_{(t)}(\bR^\trop)\ ;\quad
    (\sA, Z) \mapsto \mathrm{Im}(Z).
\end{align*}
Through this map, the sequence $(\sigma_n)_n$ becomes the following sequence of normalized points in $\cX_{(t)}(\bR^\trop)$:
\begin{align*}
    \Pi(\sigma_n) =
    \begin{cases}
    \ell_{(t)}^+ & \mbox{ if } n=0, \\
    \phi^{s+n}(\ell_{(t)}^+)/ \| \phi^{s+n}(\ell_{(t)}^+)\| & \mbox{ if } n>0.
    \end{cases}
\end{align*}
We are going to estimate $d(\sigma_m, \sigma_n)$ for $m, n \gg 0$.
Since for simples $S_i = S_i^{(t)}$, $(F_\phi^n F'_\phi)^{-1} (S_i)$ are again simples of $(F_\phi^n F'_\phi)^{-1}(\sA^{(t)})$,
\begin{align*}
    d(\sigma_m, \sigma_n) = \max_{i \in I} \Bigg\{ |\varphi^+_{\sigma_m}(S_i) - \varphi^+_{\sigma_n}(S_i)|, |\varphi^-_{\sigma_m}(S_i) - \varphi^-_{\sigma_n}(S_i)|, \Big| \log \frac{|Z_{\sigma_m}(S_i)|}{|Z_{\sigma_n}(S_i)|} \Big| \Bigg\}.
\end{align*}
By \cref{lem:intermediate} (1), $|\varphi^\pm_{\sigma_m}(S_i) - \varphi^\pm_{\sigma_n}(S_i)| = 0, 1$.
Through the map $\Pi$, we can see that $|\varphi^\pm_{\sigma_m}(S_i) - \varphi^\pm_{\sigma_n}(S_i)| = 1$ if and only if $x^{(t)}_i(\phi^{s+m}(\ell^+_{(t)}))$ and $x^{(t)}_i(\phi^{s+n}(\ell^+_{(t)}))$ have the different signs.
The $\cX$-fillingness of the attracting point $p^+_\phi$ implies that $\sgn(x_i^{(t)}(\phi^m(\ell_{(t)}^+))) = \sgn(x_i^{(t)}(\phi^n(\ell_{(t)}^+)))$ for $m,n \gg 0$ since $p^+_\phi$ is contained in the open subset \[\bS \{ w \in \cX_\bs(\bR^\trop) \mid \sgn(x_i(w)) = \sgn(x_i(p_\phi^+)) \mbox{ for all } i \in I \} \subset \bS \cX_\bs(\bR^\trop).\]
Therefore, $(\sigma_n)_{n \geq 1}$ is Cauchy sequence if so is the sequence $(\Pi(\sigma_n))_{n \geq 1}$.
It follows from \cite[Theorem 5.4]{TS} that the sequence $(\Pi(\sigma_n))_n$ is Cauchy.
\end{proof}

One can prove the following lemma in the same manner as \cite[Theorem 2.15]{FFHKL}.

\begin{lem}\label{lem:Fsigma=sigmalambda}
Let $\sD$ be a triangulated category and $F \in \Aut(\sD)$.
If there is a stability condition $\sigma \in \Stab(\sD)$ and $\lambda \geq 1$ such that
\begin{align*}
    F \cdot \sigma = \sigma \cdot \frac{\sqrt{-1}}{\pi}\log \lambda.
\end{align*}
Then $F$ is pseudo-Anosov.
\end{lem}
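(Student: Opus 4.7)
The plan is to exploit the functional equation $F \cdot \sigma = \sigma \cdot g$, where $g = \tfrac{\sqrt{-1}}{\pi}\log\lambda \in \bC \subset \widetilde{GL}\!\,^+(2,\bR)$, to derive an exact scaling law for the $\sigma$-mass under iteration of $F$, and then read off the mass growth $h_F^\dagger(E)$ directly. The two structural inputs are the commutativity of the left $\Aut(\sD)$-action and the right $\widetilde{GL}\!\,^+(2,\bR)$-action on $\Stab(\sD)$, and the standard behaviour of the mass under each of these two actions.

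First, iterating the hypothesis yields $F^n \cdot \sigma = \sigma \cdot (ng)$ for every $n \in \bZ$. Then I would analyse how mass transforms under each action separately. Under the right $\bC$-action with our particular $g$, one has $\theta(g) = \mathrm{Re}(g) = 0$, so the heart and the collection of semistable objects are preserved and only the central charge is rescaled by the scalar $\overline{g}^{-1}$, whose modulus is $\lambda$; hence $m_{\sigma \cdot (ng)}(E) = \lambda^n m_\sigma(E)$. Under the left $\Aut(\sD)$-action, the autoequivalence $F$ sends $\sigma$-semistable objects to $(F\cdot\sigma)$-semistable objects, and unpacking the definition of the new central charge $Z \circ [F^{-1}]$ shows that corresponding HN factors have equal moduli; hence $m_{F^n \cdot \sigma}(E) = m_\sigma(F^{-n}(E))$. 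Combining these two computations with the iterated equivariance produces the exact scaling law
\[
m_\sigma(F^n E) \,=\, \lambda^n\, m_\sigma(E)
\]
for every $n \in \bZ$ and every $E \in \sD$.

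With the scaling law in hand the conclusion is immediate: for any nonzero $E \in \sD$ the mass $m_\sigma(E)$ is a strictly positive real number (each HN factor has $|Z(A_i)|>0$), so
\[
h_F^\dagger(E) \,=\, \limsup_{n\to\infty}\frac{1}{n}\log m_\sigma(F^n E) \,=\, \log\lambda,
\]
independently of the choice of $\sigma \in \Stab^\dagger(\sD)$ by \cref{prop:mass_properties}(1). Combined with the strict inequality $\lambda>1$ (genuinely required for pseudo-Anosovness and guaranteed in the application to \cref{thm:pA_auto}), this is exactly the assertion that $F$ is pseudo-Anosov with stretch factor $\lambda$.

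The main point where I would take special care is the mass-transformation computation, in particular keeping the sign conventions in $\overline{\lambda} = \exp(\pi\lambda\sqrt{-1})$ and $\sigma \cdot g = (\sA', \overline{g}^{-1} \circ Z)$ consistent with the dynamical direction, so that the mass genuinely \emph{grows} along iterates of $F$ rather than decaying. Once these conventions are pinned down, the entire argument reduces to the scalar geometric growth displayed above, with no further use of the internal structure of $\sD$ beyond the uniqueness of the Harder--Narasimhan filtration.
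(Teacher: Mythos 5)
Your method is right and is the same simplification of the FFHKL argument that the paper intends: iterate the functional equation using the commuting left $\Aut(\sD)$-action and right $\widetilde{GL}\!\,^+(2,\bR)$-action, and observe that a $\bC$-element with $\theta=0$ fixes the heart and rescales the central charge uniformly, so the Harder--Narasimhan filtrations with respect to $\sigma$ and $\sigma\cdot(n\mu)$ literally coincide and the mass obeys an exact scaling law; the mass growth and hence $h^\dagger_F(E)$ then fall out immediately for every nonzero $E$, and \cref{prop:mass_properties}~(1) makes this independent of the choice of $\sigma$ in its component. You also correctly flag that pseudo-Anosovness requires $\lambda>1$, not merely $\lambda\geq 1$ as the lemma is stated.

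The one point you explicitly defer — orienting the scaling law — is, however, the only delicate step and I would count it as a genuine gap rather than a bookkeeping remark. Tracing the conventions exactly as written in this paper, namely $F\cdot\sigma=(F(\sA),\,Z\circ[F^{-1}])$, $\sigma\cdot g=(\sA',\,\overline{g}^{-1}\circ Z)$, and $\overline{\mu}=\exp(\pi\mu\sqrt{-1})$, one finds for $\mu=\frac{\sqrt{-1}}{\pi}\log\lambda$ that $\overline{\mu}=\lambda^{-1}$, so the hypothesis becomes $Z\circ[F^{-1}]=\lambda\cdot Z$, equivalently $Z\circ[F]=\lambda^{-1}Z$, and the resulting law is $m_\sigma(F^nE)=\lambda^{-n}m_\sigma(E)$ — the reverse of the one you assert. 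That would give $h^\dagger_F(E)=-\log\lambda$, in flat tension with Ikeda's bound $h_0(F)\geq\log\rho([F])$ quoted as \cref{thm:K_lower_bound} and with \cref{thm:cat_entropy_Dfd}, so the discrepancy is very likely a sign slip in the paper's own conventions or in the stated hypothesis (e.g.\ the $\bC$-embedding, or $\mu$ should be $-\frac{\sqrt{-1}}{\pi}\log\lambda$). Nonetheless, a complete proof must \emph{derive} the direction of the scaling from a fixed set of conventions rather than posit the desired one; once the orientation is fixed, the remainder of your argument goes through unchanged.
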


By combining \cref{prop:N_dyn_stab} and \cref{lem:Fsigma=sigmalambda}, we obtain \cref{thm:pA_auto}.
Moreover, by \cref{thm:NS=>unifSS}, we obtain the following:

\begin{cor}\label{cor:NS_pA_auto}
If a mutation loop $\phi$ has a non-parabolic North-South dynamics on $\bS \cX_\bs(\bR^\trop)$ with $\cX$-filling fixed points whose stretch factor is $\lambda_\phi >1$, then $F_\phi^{(t)}$ is pseudo-Anosov with a stretch factor $\lambda_{\phi}$.
\end{cor}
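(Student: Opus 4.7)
The plan is to reduce to \cref{thm:pA_auto} by combining \cref{thm:NS=>unifSS} with the observation that the North--South dynamics on $\bS\cX_\bs(\bR^\trop)$ must restrict to a pure North dynamics on $\bS\cC^+_{(t)}$ for a suitable vertex $t$. First I would apply \cref{thm:NS=>unifSS}: the non-parabolic North--South dynamics with $\cX$-filling fixed points yields uniform sign-stability of $\phi$, so in particular every representation path of $\phi$ is sign-stable on $\cC^+_{(t)}$ for each $t \in \bT_I$. Since $p^+_\phi$ is $\cX$-filling and the stretch factor satisfies $\lambda_\phi>1$ by hypothesis, the one remaining task is to pinpoint a vertex $t$ for which the dynamics on $\bS\cC^+_{(t)}$ converges to $p^+_\phi$ from \emph{every} point.

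Given the ambient North--South dynamics, any $v \in \bS\cC^+_{(t)}$ with $v \neq p^-_\phi$ already satisfies $\phi^n(v) \to p^+_\phi$. Hence it suffices to exhibit $t$ with $p^-_\phi \notin \bS\cC^+_{(t)}$. For this I would rerun the shrinking-cone argument from the proof of \cref{thm:NS=>unifSS}: pick any representation path $\gamma: t \to t'$ of $\phi$, let $\bep$ denote the (now stable) sign of $\gamma$ at a lift of $p^+_\phi$, and set $\cC^+_\gamma := \bigcap_{n\geq 1} \cC^{\bep^n}_{\gamma^n}$. That proof already establishes $\phi^n(\cC^+_{(t)}) \subset \cC^+_\gamma$ for $n \gg 0$ together with $\bigcap_{n\geq 0} \phi^n(\cC^+_\gamma) = p^+_\phi$ (the latter being exactly where non-parabolicity is used). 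If $p^-_\phi$ were in $\cC^+_{(t)}$, then $\phi$-invariance of $p^-_\phi$ would force $p^-_\phi \in \phi^n(\cC^+_{(t)}) \subset \cC^+_\gamma$ for all large $n$, hence $p^-_\phi \in \bigcap_n \phi^n(\cC^+_\gamma) = p^+_\phi$, contradicting $p^-_\phi \neq p^+_\phi$. Thus $p^-_\phi \notin \cC^+_{(t)}$, which is precisely what is needed for North dynamics on $\bS\cC^+_{(t)}$.

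Finally I would reconcile the two notions of stretch factor. The $\lambda_\phi$ from \cref{def:NS_dyn} is characterized by $\phi(w) = \lambda_\phi w$ for any lift $w$ of $p^+_\phi$, while the $\lambda_\phi^{(t)}$ appearing in \cref{thm:pA_auto} is the spectral radius of the stable presentation matrix $E^{(t)}_{\phi,\cC^+_{(t)}}$; by \cref{thm:Perron-Frobenius} the coordinate vector of $p^+_\phi$ is a Perron eigenvector of this matrix, so the two scalars coincide. With all hypotheses of \cref{thm:pA_auto} verified at this $t$, we conclude that $F_\phi^{(t)}$ is pseudo-Anosov with stretch factor $\lambda_\phi$. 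The main place that could trip one up is the cone-shrinking step that expels $p^-_\phi$ from $\cC^+_{(t)}$, but since it is essentially a re-use of the mechanism already developed in the proof of \cref{thm:NS=>unifSS}, I do not expect any genuinely new technical obstacle.
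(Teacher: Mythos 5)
Your proposal is correct and follows essentially the same route the paper takes, namely combining \cref{thm:NS=>unifSS} (to obtain uniform sign-stability) with \cref{thm:pA_auto}. The paper leaves the verification that $p^-_\phi \notin \bS\cC^+_{(t)}$ (needed so that the North--South dynamics restricts to a genuine North dynamics on $\bS\cC^+_{(t)}$) implicit; your cone-shrinking argument, reusing \eqref{eq:North_dyn} and the inclusion $\phi^n(\cC^+_{(t)}) \subset \cC^+_\gamma$ from the proof of \cref{thm:NS=>unifSS}, fills that in correctly.
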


\begin{rmk}
The assumption of \cref{thm:pA_auto} is very strong, and especially the condition that North dynamics looks unnatural and it looks far from the sign stability.
Although, we conjecture that the direct relationship between North (-South) dynamics and the uniformly sign stability:

\begin{conj}[{\cite[Conjecture 7.9]{IK20a}}]
Let $\bs$ be any seed pattern and $\phi$ is a mutation loop.
Then $\phi$ is uniformly sign-stable with cluster stretch factor greater than $1$ if and only if $\phi$ has North-South dynamics on $\bS \cX_\bs(\bR^\trop)$ with $\cX$-filling attracting/repelling points.
\end{conj}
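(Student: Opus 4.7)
The plan is to prove the equivalence in two directions; the forward direction that North--South dynamics with $\cX$-filling fixed points implies uniform sign stability with $\lambda_\phi>1$ is essentially \cref{thm:NS=>unifSS} (modulo verifying that the non-parabolic hypothesis there is automatic under the conjectural setup, which should follow from $\cX$-fillingness of both fixed points). The substantive content lies in the converse, which is where I would focus.

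For the converse, assume $\phi$ is uniformly sign-stable with cluster stretch factor $\lambda_\phi>1$. Fix $t_0 \in \bT_I$ and a representation path $\gamma$ of $\phi$. First I would produce the candidate attracting point $p^+_\phi$ as the Perron eigenray of the stable presentation matrix $E^{(t_0)}_{\phi,\bR_{>0}\cX_\bs(\bZ^\trop)}$ via \cref{thm:Perron-Frobenius}, and symmetrically produce $p^-_\phi$ by applying the same construction to $\phi^{-1}$, which is uniformly sign-stable with the same stretch factor via the reversed representation path. Independence from the choice of $t_0$ should follow from compatibility of signed seed mutations with linearizations.

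The second step is to establish $\cX$-fillingness of $p^\pm_\phi$ at every seed $t'\in\bT_I$. The strategy is that a zero coordinate $x^{(t')}_i(p^+_\phi)=0$ for some $t',i$ would force a sign ambiguity of any representation path passing through $t'$ on points of $\bR_{>0}\cX_\bs(\bZ^\trop)$ near $p^+_\phi$; combined with the attracting behavior, which funnels rational points arbitrarily close to $p^+_\phi$ under iteration, this would contradict uniform sign stability. The third step is dynamical convergence on all of $\bS\cX_\bs(\bR^\trop)\setminus p^-_\phi$: inside the stable cone $\cC^\stab_\gamma$ the map $\phi$ acts as the linear map $E^{(t_0)}_{\phi,\bR_{>0}\cX_\bs(\bZ^\trop)}$, whose dominant eigenvalue is simple by \cref{thm:TS_main}, giving convergence to the Perron ray $p^+_\phi$; for orbits starting outside $\cC^\stab_\gamma$, uniform sign stability guarantees they enter $\cC^\stab_\gamma$ in finitely many steps, so convergence propagates.

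I expect the main obstacle to be bridging the rational-versus-real gap: uniform sign stability is an assertion about $\bR_{>0}\cX_\bs(\bZ^\trop)$, but the Perron eigenvector is a priori irrational, and $\cX$-fillingness must be verified at every one of the infinitely many seeds $t'\in\bT_I$ simultaneously. Rational polyhedrality and strict convexity of the stable cone from \cref{lem:str_conv}, together with density of rational directions, should bridge this gap, but one may need a quantitative refinement—for example a uniform rate at which iterates enter the interior of the stable cone—that the current axiom of uniform sign stability does not obviously supply. A related difficulty is that the PL-chamber structure of $\phi$ depends on the seed, so showing that some power of $\phi$ maps $\cC^\stab_\gamma$ strictly into its interior (a strict-positivity condition of Perron--Frobenius type) may itself require nontrivial cluster-algebraic input, potentially through the mutation fan; this is likely why the statement remains a conjecture.
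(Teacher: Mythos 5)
This statement is a \emph{conjecture} in the paper (it is labelled as such and attributed to \cite[Conjecture 7.9]{IK20a}), and the paper offers no proof of it; there is therefore no ``paper's own proof'' to compare your attempt against. Your write-up is, to your credit, honest about this: it is an attack plan rather than a proof, and it correctly identifies as live obstacles precisely the points where a proof would have to do real work.

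Two of those obstacles deserve to be flagged as sharper than your phrasing suggests. First, in the forward direction you say the non-parabolicity hypothesis of \cref{thm:NS=>unifSS} ``should follow from $\cX$-fillingness of both fixed points,'' but this is not evident: $\cX$-fillingness only says the sign sequences $\bep_\gamma(p^+_\phi)$ and $\bep_\gamma(p^-_\phi)$ are strict, not that they differ, and \eqref{eq:non-para} is precisely the assertion that they differ. A single strict-sign linearization can perfectly well carry both an eigenvalue $\lambda>1$ and an eigenvalue $\lambda^{-1}<1$, so one cannot rule out $\bep_\gamma(p^+_\phi)=\bep_\gamma(p^-_\phi)$ on numerical grounds alone; this needs a genuine argument (or the conjecture needs to be read as implicitly including non-parabolicity). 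Second, in the converse your step three asserts that ``inside the stable cone $\cC^\stab_\gamma$ the map $\phi$ acts as the linear map $E^{(t_0)}_{\phi,\bR_{>0}\cX_\bs(\bZ^\trop)}$''; but by \cref{d:sign stability} the sign only stabilizes after finitely many iterates depending on the point, so on $\cC^\stab_\gamma$ itself one only knows that the orbit eventually enters a region where the linearization is stable, not that $\phi$ is globally linear there. Establishing genuine contraction towards the Perron ray — the step where one would invoke \cref{thm:TS_main}(1), $f^n(\cC)\subset\interior\cC$ — is exactly the quantitative strict-inclusion you later flag as the main difficulty, and it is not supplied by the definition of uniform sign stability as it stands. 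Your final paragraph identifies the right open issues; the conjecture remains open, and your sketch does not resolve it.
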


Under this conjecture, the assumption of \cref{thm:pA_auto} is rewritten shortly as follows:
\begin{conj}
If a mutation loop $\phi$ is uniformly sign-stable with a cluster stretch factor $\lambda^{(t)}_\phi >1$ at $t \in \bT_I$, then the functor $F^{(t)}_\phi$ is pseudo-Anosov with a stretch factor $\lambda^{(t)}_\phi$.
\end{conj}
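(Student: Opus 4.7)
The plan is to establish the conjecture by producing a stability condition $\sigma_\phi^{(t)} \in \Stab(\Dfd^{(t)})$ satisfying $F_\phi^{(t)} \cdot \sigma_\phi^{(t)} = \sigma_\phi^{(t)} \cdot \frac{\sqrt{-1}}{\pi}\log \lambda_\phi^{(t)}$, after which \cref{lem:Fsigma=sigmalambda} immediately delivers the pseudo-Anosov conclusion with the correct stretch factor. The construction should mimic \cref{prop:N_dyn_stab}: start from the canonical stability condition $\sigma_0 = (\cA^{(t)}, Z_0)$ with $Z_0(S_i) = \sqrt{-1}$, and iterate
\[ \sigma_n := F_\phi^n F'_\phi \cdot \sigma_0 \cdot \frac{\log\|\phi^{s+n}(\ell_{(t)}^+)\|}{\pi\sqrt{-1}}, \]
so the real task is to show that $(\sigma_n)$ is a Cauchy sequence in $\Stab^\dagger(\Dfd^{(t)})$ and therefore convergent by completeness, and then that its limit has the required equivariance.

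First, I would use \cref{thm:Perron-Frobenius} to extract a positive Perron eigenvector $v^+_\phi \in \interior \cC^\stab_{\gamma}$ of the stable presentation matrix $E^{(t)}_{\phi,\cC^+_{(t)}}$; because $\lambda_\phi^{(t)} > 1$, this eigenvector produces a genuine spectral gap. Next, I would verify the easy coordinates of the distance bound: the phase differences $|\varphi^\pm_{\sigma_m}(S_i) - \varphi^\pm_{\sigma_n}(S_i)| \in \{0,1\}$ are eventually zero, because uniform sign-stability forces the signs $\sgn(x_i^{(t)}(\phi^n(\ell_{(t)}^+)))$ to stabilize once $n$ is large enough, so each phase extreme jumps only finitely often. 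Finally, I would establish convergence of the projectivized images $\Pi(\sigma_n) = \phi^{s+n}(\ell_{(t)}^+)/\|\phi^{s+n}(\ell_{(t)}^+)\|$ using the spectral gap from the first step together with \cref{thm:TS_main}, working inside the stable cone $\cC^\stab_\gamma$, which contains the forward tail of $\ell^+_{(t)}$ by the sign-stability hypothesis; this controls the remaining mass-ratio term in the generalized metric on $\Stab^\dagger(\Dfd^{(t)})$.

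The main obstacle is precisely this last convergence step, which in \cref{prop:N_dyn_stab} was handed for free by the North-dynamics hypothesis. Without that hypothesis one must argue that uniform sign-stability together with $\lambda_\phi^{(t)} > 1$ is by itself enough to force a North-type attraction on $\bS \cC^+_{(t)}$, or at least on the forward orbit of $\ell^+_{(t)}$. A clean strategy is to first prove the intermediate \cite[Conjecture 7.9]{IK20a} recalled in the excerpt — namely that uniform sign-stability with $\lambda_\phi^{(t)} > 1$ implies non-parabolic North–South dynamics on $\bS \cX_\bs(\bR^\trop)$ with $\cX$-filling fixed points — after which \cref{cor:NS_pA_auto} delivers the desired pseudo-Anosovness directly. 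Proving that intermediate statement in turn looks like the genuine difficulty: it seems to require a finer analysis of how $\phi$ acts on the Fock–Goncharov cluster complex, exploiting the strict convexity of the cones $\cC^{\bep^n}_{\gamma^n}$ from \cref{lem:str_conv} together with the fact that the Perron eigenvector $v^+_\phi$ lies in their nested intersection $\cC^\stab_\gamma$, and then showing that $\cX$-fillingness of the candidate attracting ray can be read off from uniform sign-stability across all vertices of $\bT_I$.
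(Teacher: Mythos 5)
You have correctly diagnosed that the statement you are asked to prove is \emph{not} a theorem in the paper but an open conjecture: the text explicitly introduces it with the phrase ``Under this conjecture, the assumption of \cref{thm:pA_auto} is rewritten shortly as follows,'' where ``this conjecture'' is \cite[Conjecture 7.9]{IK20a}. The paper therefore gives no proof; it simply records that the statement is a conditional reformulation of \cref{thm:pA_auto}. Your proposal arrives at exactly the same reduction: you observe that once one has \cite[Conjecture 7.9]{IK20a} (uniform sign-stability with $\lambda_\phi^{(t)}>1$ implies non-parabolic North--South dynamics with $\cX$-filling fixed points), the rest follows from \cref{cor:NS_pA_auto}, and you identify proving that implication as ``the genuine difficulty.'' In that sense your analysis of the logical structure matches the paper's.

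Where your proposal overreaches is in the first two paragraphs, where you sketch a direct Cauchy argument for the sequence $(\sigma_n)$ using only uniform sign-stability and the Perron--Frobenius eigenvector. This is precisely the step that is not known to go through: uniform sign-stability controls the tropical signs $\bep_\gamma(\phi^n(w))$ after a finite delay, which suffices for the finitely many phase jumps $|\varphi^\pm_{\sigma_m}(S_i)-\varphi^\pm_{\sigma_n}(S_i)|$, but it does not on its own furnish the metric estimate needed for the mass-ratio coordinate of $d(\sigma_m,\sigma_n)$. In the proof of \cref{prop:N_dyn_stab} that estimate is deduced from \cite[Theorem 5.4]{TS} \emph{after} one already knows that $\phi$ has a North dynamics (so that the normalized orbit $\Pi(\sigma_n)$ converges in the projectivized cone). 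Passing from uniform sign-stability to a North-type attraction on $\bS\cC^+_{(t)}$ is exactly the content of the open conjecture, so your first two paragraphs should be read as motivation rather than as steps of a proof. To be faithful to the paper, the right phrasing is: assuming \cite[Conjecture 7.9]{IK20a}, the hypotheses of \cref{thm:pA_auto} are met, and the conclusion follows; no unconditional proof is currently available.
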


\end{rmk}

\section{Example: pseudo-Anosov mapping classes on marked surfaces}

\subsection{Corollaries}

Let $\Sigma$ be a marked surface, that is, a compact oriented surface with a fixed non-empty finite set of \emph{marked points} on it. A marked point is called a \emph{puncture} if it lies in the interior of $\Sigma$, and a \emph{special point} otherwise. 
Let $P=P(\Sigma)$ (resp. $M_\partial=M_\partial(\Sigma)$) denote the set of punctures (resp. special points). A marked surface is called a \emph{punctured surface} if it has empty boundary (and hence $M_\partial=\emptyset$). 
We denote by $g$ the genus of $\Sigma$, $h$ the number of punctures, and $b$ the number of boundary components in the sequel. We always assume the following conditions:
\begin{enumerate}
    \item[(S1)] Each boundary component (if exists) has at least one marked point.
    \item[(S2)] $3(2g-2+h+b)+2|M_\partial| >0$.
    \item[(S3)] If $g=0$ and $b=0$, then $h \geq 4$.
\end{enumerate}


Let $I = I(\Sigma) := \{1, 2, \dots, 6g-6+3h \}$.
By fixing a vertex $t_0 \in \bT_I$ and an ideal triangulation $\tri_0$ of $\Sigma$ with a labeling $\ell_0: I \to \tri_0$, we obtain the seed pattern $\bs_\Sigma$ associated with $\Sigma$ as follows:
\begin{itemize}
    \item 
    $B^{(t_0)} := B^{(\tri_0, \ell_0)} := (b_{\ell_0(i), \ell_0(j)}^{\tri_0})_{i,j \in I}$.
    \item $B^{(t)} := B^{(\tri, \ell)}$ for $t \in \bT_I$, where $(\tri, \ell)$ is the labeled triangulation of $\Sigma$ obtained from $(\tri_0, \ell_0)$ by labeled flipping along the path $\gamma: t_0 \to t$.
\end{itemize}
We will abbreviate as
$\mathcal{Z}_{\bs_\Sigma} = \mathcal{Z}_\Sigma$ for $\mathcal{Z} = \cA, \cX$.
A \emph{mapping class} on $\Sigma$ is a homotopy class of an orientation preserving homeomorphism of $\Sigma$.
We can think a mapping class of $\Sigma$ as a mutation loop of the seed pattern $\bs_\Sigma$ as follows:
Let $\phi$ be a mapping class and $t \in \bT_I$ be a vertex with a labeled triangulation $(\tri, \ell)$ such that $B^{(t)} = B^{(\tri,\ell)}$.
Then, the triangulation $\phi^{-1}(\tri)$ is obtained from $\tri$ by flipping along some path $\gamma: t \to t'$.
Thus, there is a permutation $\sigma \in \fS_I$ such that $\sigma. B^{(t')} = B^{\phi^{-1}(\tri, \ell)}$.
Since $B^{\phi^{-1}(\tri,\ell)} = B^{(\tri, \ell)}$, the path $\gamma$ represents a mutation loop and in some sense, it corresponds to the mapping class $\phi$ (see \cite[Theorem 4.5]{IK20a}).

An orientation perserving homeomorphism $f$ of $\Sigma$ is \emph{pseudo-Anosov} if there are two measured foliations $\cF^+$, $\cF^-$ and a real number $\lambda >1$, called stretch factor of $f$, such that $\cF^+ \pitchfork \cF^-$ and $f(\cF^\pm) = \lambda^{\pm 1} \cF^\pm$.
A mapping class is \emph{pseudo-Anosov} if it is represented by a pseudo-Anosov homeomorphism.
For a pseudo-Anosov mapping class $\phi$, its stretch factor $\lambda_\phi$ is defined to be the stretch factor of a pseudo-Anosov representative.

We have the following characterization of the pseudo-Anosov mapping classes on a punctured surface:
\begin{thm}[{\cite[Theorem 1.2]{IK20a}}]
Let $\phi$ be a mapping class of a punctured surface $\Sigma$.
Then, the following conditions are equivalent:
\begin{enumerate}
    \item The mapping class $\phi$ is pseudo-Anosov.
    \item The mutation loop $\phi$ is uniformly sign-stable.
    \item The mutation loop $\phi$ has a North-South dynamics on $\bS \cX_{\Sigma}(\bR^\trop)$ with $\cX$-filling fixed points.
In this case, the cluster stretch factor of the mutation loop $\phi$ coincides with the stretch factor $\lambda_\phi$ of the pA mapping class $\phi$.
\end{enumerate}

\end{thm}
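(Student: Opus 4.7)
The proof plan rests on the classical Thurston-theoretic dictionary between the tropical cluster $\cX$-variety of a punctured surface and the space of measured laminations. For a punctured surface $\Sigma$, the real tropical space $\cX_\Sigma(\bR^\trop)$ is canonically homeomorphic to Thurston's space $\ML(\Sigma)$ of measured laminations, with the tropical $\cX$-coordinates corresponding to shear coordinates with respect to an ideal triangulation. Under this identification the mapping-class-group action on $\ML(\Sigma)$ coincides with the mutation-loop action on $\cX_\Sigma(\bR^\trop)$, and $\bS\cX_\Sigma(\bR^\trop)$ is identified with $\bP\ML(\Sigma)$. Moreover, the sign $\sgn(x_i^{(t)}(w))$ records whether the lamination $w$ shears positively, negatively, or not at all along the edge $\ell(i)$ of the triangulation $\tri$ corresponding to $t$; $\cX$-fillingness translates into the lamination intersecting every edge of every ideal triangulation essentially (equivalently, having no peripheral or closed-leaf components).

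The implication $(1)\Rightarrow(3)$ is then a direct consequence of Thurston's classification: a pseudo-Anosov $\phi$ has exactly two invariant projective measured foliations $[\cF^\pm]$, acts with stretch factor $\lambda_\phi$ by $\phi\cdot\cF^\pm=\lambda_\phi^{\pm 1}\cF^\pm$, exhibits North-South dynamics on $\bP\ML(\Sigma)$, and each $\cF^\pm$ fills $\Sigma$ transversely, hence is $\cX$-filling. The non-parabolic condition $\bep_\gamma(p^+_\phi)\neq \bep_\gamma(p^-_\phi)$ follows from transversality of $\cF^+$ and $\cF^-$: were their sign patterns identical along some representation path, one could construct a non-zero transverse intersection measure between parallel shear patterns, contradicting the minimality of the foliations. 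The implication $(3)\Rightarrow(2)$ together with the inequality $\lambda_\phi^{(t)}>1$ is exactly \cref{thm:NS=>unifSS}, so no new work is needed there.

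The hard direction, and the conceptual heart of the theorem, is $(2)\Rightarrow(1)$. The plan is to use uniform sign stability to manufacture the pseudo-Anosov data intrinsically. Fix a representation path $\gamma:t\to t'$ and let $E:=E^{(t)}_{\phi,\Omega^\bQ_\bs}$ be the stable presentation matrix. By \cref{thm:Perron-Frobenius} (and since $\gamma$ may be taken fully-mutating), $E$ has a positive Perron eigenvector $\boldsymbol{x}(w^+)$ with eigenvalue $\lambda^{(t)}_\phi\geq 1$; the vector $w^+\in\cX_{(t)}(\bR^\trop)$ corresponds to a measured lamination $\cF^+$ on $\Sigma$ satisfying $\phi\cdot\cF^+=\lambda^{(t)}_\phi\cF^+$. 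Running the same argument for $\phi^{-1}$ gives a second invariant lamination $\cF^-$ with eigenvalue $(\lambda^{(t)}_{\phi^{-1}})^{-1}\leq 1$. One then has to verify two key facts: (a) both $\cF^\pm$ are filling, which follows from the strict convexity of the stable cones $\cC^\stab_{\gamma^{\pm}}$ (\cref{lem:str_conv}), since a non-filling $\cF^\pm$ would lie on a boundary wall of the Fock--Goncharov cluster complex and contradict sign-stability at the interior of the cone; and (b) $\lambda^{(t)}_\phi>1$ — this follows because equality $\lambda^{(t)}_\phi=1$ together with full-dimensionality of $\phi^n(\cC^+_{(t)})$ would force $\phi$ to preserve a non-trivial simplicial subfan of the cluster complex, conflicting with uniform sign stability. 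With $\cF^+\pitchfork\cF^-$ filling and $\phi\cdot\cF^\pm=\lambda^{\pm 1}\cF^\pm$ for a common $\lambda=\lambda^{(t)}_\phi>1$, the Fathi--Laudenbach--Poénaru characterization of pseudo-Anosov mapping classes concludes that $\phi$ is pseudo-Anosov with stretch factor $\lambda_\phi=\lambda^{(t)}_\phi$, yielding simultaneously both $(1)$ and the asserted coincidence of stretch factors.

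The main obstacle is verifying the filling property in step (a): sign stability is an asymptotic, coordinate-by-coordinate statement, whereas $\cX$-filling is a global condition across \emph{all} triangulations. Bridging the two requires controlling how the stable cone interacts with the walls of the Fock--Goncharov cluster complex under arbitrary mutation sequences, and this is where the punctured-surface hypothesis is essential — it ensures that any non-filling measured lamination is supported on a proper subsurface whose coordinate subspace cannot be preserved under a fully-mutating, uniformly sign-stable loop. Once this geometric lemma is in hand, the rest of the equivalence assembles from the ingredients already present in the paper.
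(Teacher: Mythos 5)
This statement is quoted directly from \cite[Theorem 1.2]{IK20a}; the paper under review cites it without reproducing a proof, so there is no ``paper's own proof'' to compare against, and I evaluate your proposal on its own merits.

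Your high-level strategy (identify $\cX_\Sigma(\bR^\trop)$ with measured laminations via shear coordinates, run the cycle $(1)\Rightarrow(3)\Rightarrow(2)\Rightarrow(1)$) is the right one, and your treatment of $(1)\Rightarrow(3)$ from Thurston's classification is essentially correct. However, the proposal contains genuine gaps at the two load-bearing steps. First, for $(3)\Rightarrow(2)$ you invoke \cref{thm:NS=>unifSS}, but that theorem has the \emph{non-parabolic} condition \eqref{eq:non-para} as a hypothesis, whereas condition $(3)$ of the statement does not assert it. You cannot import non-parabolicity from the transversality $\cF^+\pitchfork\cF^-$ at this stage, since that transversality is precisely part of what $(1)$ supplies, and $(1)$ is not yet available when proving $(3)\Rightarrow(2)$. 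You would need an independent argument that, for a mapping class of a punctured surface, $\cX$-filling North-South dynamics forces distinct sign vectors for $p^\pm_\phi$ along some representation path (this is essentially the content of \cref{lem:filling}, whose own proof is left unfinished in the paper), or you would need to route $(3)\Rightarrow(1)$ first and then appeal to transversality.

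Second, and more seriously, your $(2)\Rightarrow(1)$ stops short of proving the two claims on which it hinges. You assert that the Perron eigenvector $\cF^+$ produced by \cref{thm:Perron-Frobenius} is filling because a non-filling lamination ``would lie on a boundary wall of the Fock--Goncharov cluster complex and contradict sign-stability at the interior of the cone,'' but $\cF^+$ lives in $\cC^\stab_\gamma$, which is generally a proper subcone of $\cC^+_{(t)}$ and need not meet the interior of every $\cC^+_{(t')}$; sign stability on $\Omega^\bQ_\bs$ constrains the signs of \emph{rational} points under iteration, not of the eigenvector across all triangulations. Similarly, the argument that $\lambda^{(t)}_\phi>1$ ``would force $\phi$ to preserve a non-trivial simplicial subfan'' is not substantiated: \cref{thm:Perron-Frobenius} only gives $\lambda^{(t)}_\phi\geq 1$, and ruling out the value $1$ requires more than full-dimensionality of $\phi^n(\cC^+_{(t)})$. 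You explicitly acknowledge that the filling property is ``the main obstacle'' and that once a certain ``geometric lemma is in hand, the rest of the equivalence assembles'' — but that geometric lemma \emph{is} the theorem, at least in the hard direction. As written, this is a proof plan with the decisive lemmas unproven, not a proof.
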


We will enhance the condition (3) of the above theorem.
Namely, we prove the following:
\begin{lem}\label{lem:filling}
The mapping class $\phi$ is pseudo-Anosov if and only if it has non-parabolic North-South dynamics on $\bS \cX_{\Sigma}(\bR^\trop)$ with $\cX$-filling fixed points.
\end{lem}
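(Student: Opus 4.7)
The $(\Leftarrow)$ direction is immediate from the quoted equivalence in \cite[Theorem 1.2]{IK20a}: non-parabolic North--South dynamics on $\bS\cX_\Sigma(\bR^\trop)$ with $\cX$-filling fixed points is a strengthening of North--South dynamics with $\cX$-filling fixed points, so it yields a pseudo-Anosov mapping class. The substance of the lemma is the $(\Rightarrow)$ direction: for a pseudo-Anosov $\phi$, we must produce a representation path $\gamma$ witnessing $\bep_\gamma(p^+_\phi)\neq\bep_\gamma(p^-_\phi)$, knowing already from the quoted theorem that $\phi$ is uniformly sign-stable and has North--South dynamics on $\bS\cX_\Sigma(\bR^\trop)$ with $\cX$-filling fixed points $p^\pm_\phi$.

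The plan is to argue by contradiction: assume $\bep_\gamma(p^+_\phi)=\bep_\gamma(p^-_\phi)$ for every representation path $\gamma$ of $\phi$. Write $\bep^+:=\bep_\gamma(p^+_\phi)$, which by uniform sign-stability together with the attracting nature of $p^+_\phi$ coincides with the stable sign $\bep^\stab_\gamma$. Since both $p^\pm_\phi$ are projectively $\phi$-fixed (with scaling factors $\lambda_\phi^{\pm 1}>0$), their sign data is invariant under the $\phi$-action, so $\bep_\gamma(\phi^k(p^\pm_\phi))=\bep_\gamma(p^\pm_\phi)$ for every $k$. Concatenating along $\gamma^n$ yields $\bep_{\gamma^n}(p^-_\phi)=\bep_\gamma(p^-_\phi)^n=(\bep^\stab_\gamma)^n$ for every $n\geq 1$, which by definition places $p^-_\phi$ in the stable cone $\cC^\stab_\gamma$.

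Next I apply \cref{thm:TS_main} to the linear action of $\phi$ (via the stable presentation matrix $E^{(t)}_{\phi,\cC^+_{(t)}}$) on the strictly convex cone $\cC^\stab_\gamma$, strict convexity being \cref{lem:str_conv}. Using the identification of the cluster stretch factor with the topological dilatation of the pseudo-Anosov mapping class from \cite{IK20a}, $\lambda_\phi>1$ is a simple, strictly dominant eigenvalue of the stable presentation matrix, so condition (3) of \cref{thm:TS_main} holds. This gives $\bigcap_{n\geq 0}\phi^n(\cC^\stab_\gamma)=\bR_{\geq 0}\, p^+_\phi$. Since $p^-_\phi\in\cC^\stab_\gamma$ is projectively $\phi$-invariant, $p^-_\phi\in\phi^n(\cC^\stab_\gamma)$ for all $n$, forcing $p^-_\phi\in\bR_{\geq 0}\, p^+_\phi$ and hence $p^+_\phi=p^-_\phi$ in $\bS\cX_\Sigma(\bR^\trop)$, a contradiction.

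The main obstacle is justifying the simplicity and strict dominance of $\lambda_\phi$ as an eigenvalue of the stable presentation matrix. This is classical for the topological action of a pseudo-Anosov map (the dilatation is a simple root of its characteristic polynomial strictly dominating the other roots), but the transfer to the cluster-algebraic stable presentation matrix requires the explicit identification of the two actions established in \cite{IK20a}. As a fallback, one could avoid \cref{thm:TS_main} altogether via a direct geometric argument: $p^\pm_\phi$ are the projective classes of the transverse filling measured foliations $\cF^\pm$, and one exhibits an ideal triangulation $\tri$ together with an edge $e$ where the shear coordinates of $\cF^+$ and $\cF^-$ carry opposite signs, then realizes this as the first mutation of a suitable representation path of $\phi$ to obtain non-parabolicity directly.
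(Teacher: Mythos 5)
The backward implication is correctly dispatched, and you rightly isolate the forward direction as the substance: given that $\phi$ is pseudo-Anosov (hence uniformly sign-stable with North--South dynamics and $\cX$-filling fixed points $p^\pm_\phi$), one must produce a representation path $\gamma$ with $\bep_\gamma(p^+_\phi)\neq\bep_\gamma(p^-_\phi)$. Your reduction is sound up to and including the deduction that, if no such $\gamma$ exists, then $p^-_\phi\in\cC^\stab_\gamma$.

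The gap is where you invoke condition (3) of \cref{thm:TS_main}. To get $\bigcap_{n}\phi^n(\cC^\stab_\gamma)=\bR_{\geq 0}\,p^+_\phi$ you need $\lambda_\phi$ to be a \emph{simple, strictly dominant} eigenvalue of the stable presentation matrix $E^{(t)}_\phi$ with eigenvector in $\interior\cC^\stab_\gamma$. The citation you lean on matches only the spectral radii: \cite[Theorem~1.2]{IK20a}, as quoted in this paper, says the cluster stretch factor equals the pseudo-Anosov dilatation, nothing more. The stable presentation matrix acts on $\bR^{6g-6+3h}$, while the classical simplicity/dominance statement for the dilatation concerns a linearization of the action on measured foliations, a space of dimension $6g-6+2h$; these are not conjugate, and nothing you have cited rules out further eigenvalues of modulus $\lambda_\phi$ coming from the $h$ extra directions. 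Note, moreover, that the proof of \cref{thm:NS=>unifSS} deduces $\bigcap_{n}\phi^n(\cC^+_\gamma)=p^+_\phi$ \emph{by assuming} the non-parabolic condition, so feeding conditions (2)/(3) of \cref{thm:TS_main} into the argument here runs perilously close to assuming the very thing you want.

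Your fallback is the route the paper in fact attempts. Its proof of this lemma (left truncated in the source) begins by invoking the transversality $\cF^+\pitchfork\cF^-$ of the stable and unstable measured foliations to produce an ideal arc $\alpha$ of a triangulation at which $\cF^+$ and $\cF^-$ carry data of opposite sign; a representation path beginning with a flip at $\alpha$ then witnesses the non-parabolic condition \eqref{eq:non-para}. This is exactly the ``direct geometric argument'' you sketch at the end, and it sidesteps any spectral claim about $E^{(t)}_\phi$ beyond what \cite{IK20a} actually furnishes.
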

\begin{proof}
Let $\cF^\pm$ be the pair of measured foliations of the pseudo-Anosov representative of $\phi$.
The condition $\cF^+ \pitchfork \cF^-$ implies that there is some ideal arc $\alpha \in \tri$ of an ideal triangulation $\tri$ of $\Sigma$,
\end{proof}

In order to return our setting, we review the works by Labardini-Fragoso. 
He gives a way to associate a good
\footnote{More precisely, this quiver with potentials are non-degenerate except for some special cases.}
quiver with non-degenerate potential $(Q^\tri, W^\tri)$ for a (tagged) ideal triangulation $\tri$ marked surface $\Sigma$ \cite{LF09}.
Namely, for an ideal triangulation $\tri$ of a marked surface $\Sigma$, the quiver with potentials $(Q^{f_k(\tri)}, W^{f_k(\tri)})$ and $\mu_k(Q^\tri, W^\tri)$ are right equivalent.
Here, $f_k$ denotes a flip of $\tri$ at $k \in \tri$.
Thus, by using this type of quiver with potentials for defining the seed pattern $\bs^\mathrm{cat}$, all mutation loops are QP mutation loops.

By combining with \cref{lem:filling} and \cref{cor:NS_pA_auto}, we get the next corollary.

\begin{cor}
Let $\phi$ be a pseudo-Anosov mapping class of a punctured surface $\Sigma$.
Then since $\phi$ is a sign-stable mutation loop, there is the derived equivalence $F_\phi^{(t)}$ of the Ginzburg dg algebra $\Gamma^{(t)}$ for each $t \in \bT_I$, and we have
\begin{align*}
    h_T(F_\phi^{(t)}|_{\Dfd}) = h_0(F_\phi^{(t)}|_\per)\, (= h_{\rm alg}(\phi^a) = h_{\rm alg}(\phi^x)) = h_{\rm top}(\phi)
\end{align*}
for all $T \in \bR$.
\end{cor}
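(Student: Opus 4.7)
The plan is essentially bookkeeping: the content is that the hypotheses of the paper's two main entropy theorems are met in this geometric setting, after which the stated equalities follow from results in the literature together with Thurston's classical formula.

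First I would invoke \cite[Theorem 1.2]{IK20a} together with \cref{lem:filling} to convert the hypothesis that \emph{$\phi$ is pseudo-Anosov} into the statement that \emph{$\phi$ is uniformly sign-stable with non-parabolic North--South dynamics on $\bS \cX_\Sigma(\bR^\trop)$ and $\cX$-filling fixed points}, and to record that the cluster stretch factor $\lambda_\phi^{(t)}$ coincides with the topological stretch factor $\lambda_\phi$ of the pA representative for every starting vertex $t$. Since we are using Labardini-Fragoso's flip-compatible nondegenerate potentials $(Q^\tri, W^\tri)$, every mutation loop is automatically a QP mutation loop in the sense of \cref{def:QP_mut_loop}, so the autoequivalence $F_\phi^{(t)}$ is defined.

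Once that setup is in place, \cref{thm:cat_entropy_Dfd} and \cref{thm:cat_entropy_per} give
\[
 h_T\bigl(F_\phi^{(t)}|_{\Dfd}\bigr) = \log \check\lambda_\phi^{(t)} \quad \text{for every } T \in \bR, \qquad h_0\bigl(F_\phi^{(t)}|_{\per}\bigr) = \log \lambda_\phi^{(t)},
\]
provided the relevant instance of \cref{p:spec_same} is known. I would verify $\check\lambda_\phi^{(t)} = \lambda_\phi^{(t)}$ geometrically: $E_\phi^{(t)}$ presents the PL-linearization of $\phi$ on the space of $\cX$-laminations while $\check E_\phi^{(t)}$ presents it on $\cA$-laminations, and the two sides are intertwined by the ensemble map $p^\ast$. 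In the pseudo-Anosov case both spectral radii pick up the expansion factor of the invariant foliations of the pA representative (this is essentially the spectral content of \cite[Theorem 1.1]{IK19}), so both equal $\lambda_\phi$. Equivalently, the characteristic polynomial of $E_\phi^{(t)}$ is (anti-)palindromic because the $\phi$-action preserves the intersection pairing on measured laminations.

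Finally I would assemble the chain of identifications: the equalities $h_{\rm alg}(\phi^a) = h_{\rm alg}(\phi^x) = \log \lambda_\phi$ are the main theorem of \cite{IK19} applied to our uniformly sign-stable $\phi$, and $h_{\rm top}(\phi) = \log \lambda_\phi$ is Thurston's classical formula for the topological entropy of a pseudo-Anosov homeomorphism. Combined with the two categorical entropies computed above, this yields the desired string of equalities. The main obstacle is the verification of \cref{p:spec_same} for surfaces, since the palindromy statement behaves differently on the image and the cokernel of $p^\ast$ when punctures are present; the remainder of the argument is assembly of already-established results.
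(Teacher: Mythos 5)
Your plan follows the same broad outline as the paper (convert pA into uniform sign-stability and North--South dynamics via \cite[Theorem 1.2]{IK20a} and \cref{lem:filling}, note that Labardini-Fragoso's flip-compatible nondegenerate potentials make every mutation loop a QP mutation loop, apply \cref{thm:cat_entropy_Dfd,thm:cat_entropy_per}, and invoke \cite{IK19} and Thurston for the last two equalities), but it diverges at the crucial junction: how to know that $\check\lambda_\phi^{(t)} = \lambda_\phi^{(t)}$. You attempt a direct spectral argument through the ensemble map $p^*$, observing that $p^*$ intertwines $E_\phi$ with $\check{E}_\phi$ and invoking (anti-)palindromy coming from preservation of the intersection pairing. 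You then flag, correctly, that this argument breaks down on $\ker p^*$ and $\mathrm{coker}\,p^*$ when punctures are present, and leave the gap open. That gap is real: palindromy of the full characteristic polynomial (which is what \cref{p:spec_same} asks for) does not follow from $p^*$-equivariance alone when $B^{(t)}$ is degenerate, and the degenerate piece for punctured surfaces is precisely the part carried by the punctures.

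The paper's own one-line proof routes around this entirely, and you should do the same: it combines \cref{lem:filling} not with a spectral argument but with \cref{cor:NS_pA_auto}. The point is that \cref{cor:NS_pA_auto} produces a stability condition $\sigma_\phi$ on $\Dfd^{(t)}$ satisfying $F_\phi^{(t)} \cdot \sigma_\phi = \sigma_\phi \cdot \frac{\sqrt{-1}}{\pi}\log\lambda_\phi^{(t)}$, which makes $F_\phi^{(t)}|_{\Dfd}$ FFHKL-pseudo-Anosov with stretch factor $\lambda_\phi^{(t)} = \rho(E_\phi^{(t)})$. By \cref{prop:mass_properties} and Ikeda's identification of mass growth with categorical entropy for an algebraic heart, this forces $h_0(F_\phi^{(t)}|_{\Dfd}) = \log\lambda_\phi^{(t)}$; comparing with \cref{thm:cat_entropy_Dfd}, which gives $h_0(F_\phi^{(t)}|_{\Dfd}) = \log\check\lambda_\phi^{(t)}$, yields $\check\lambda_\phi^{(t)} = \lambda_\phi^{(t)}$ for free, with no palindromy needed and no discussion of $\ker p^*$. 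In short: the paper converts the geometric dynamics into the existence of a fixed-ray stability condition and harvests the spectral identity as a byproduct, whereas you try to prove the spectral identity from scratch and hit exactly the degeneracy issue the paper's route avoids.
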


\paragraph{\textbf{Weak sign stability}}

In \cite{IK20b}, the sign stability of pseudo-Anosov mapping classes of the marked surfaces with possibly non-empty boundaries is discussed.
First, the mapping class $\phi$ on a marked surface $\Sigma$ is pseudo-Anosov if the induced one $\pi(\phi)$ on the punctured surface $\bar{\Sigma}$, which is obtained from $\Sigma$ by each boundary component of $\Sigma$ shrinking to a puncture, is pseudo-Anosov.
The conclusion is the pseudo-Anosovness of this case is characterized by the \emph{weak sign stability} (and some technical conditions).
Namely, it admits the existence of the non-stabilizing components in the stable sign.
Thus, one has to choose the sign of the non-stabilizing components to define the functor $F_\phi$.
The presentation matrices corresponding to two different stable signs are different in general, so the functor $F_\phi$ is not unique since we cannot use \cref{lem:rigid_index}.
Although, for each sign, we can get the inequality of the entropies.

\begin{prop}[{\cite[Proposition 5.2]{IK20b}}]\label{prop:weak_SS_eigen}
Let $\phi$ be a pseudo-Anosov mapping class on a marked surface $\Sigma$ and $\gamma$ be a representation path of $\phi$.
For any sign $\bep \in \{+,-\}^{h(\gamma)}$ such that the stabilizing components of $\bep_\gamma^\stab$ coincide with the corresponding component of $\bep$, the presentation matrix $E_{\gamma,\bep}$ has the stretch factor $\lambda_{\pi(\phi)}$ of the pseudo-Anosov mapping class $\pi(\phi)$ on $\bar{\Sigma}$ as a positive eigenvalue.
\end{prop}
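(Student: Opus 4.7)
The plan is to leverage the shrinking projection $\pi: \Sigma \to \bar\Sigma$ at the level of tropical cluster $\cX$-varieties, so that the presentation matrix on $\Sigma$ acquires a block structure with the one on $\bar\Sigma$ as a piece. First I would fix an ideal triangulation $\tri$ of $\Sigma$ and its image $\pi(\tri)$ of $\bar\Sigma$, and set up a natural partition $I(\Sigma)=I_{\mathrm{int}} \sqcup I_{\partial}$, where $I_{\mathrm{int}}$ indexes arcs that descend to arcs of $\pi(\tri)$ and $I_{\partial}$ indexes the arcs that are destroyed or identified under $\pi$. Correspondingly, I would exhibit a linear surjection $M^{(t)} \twoheadrightarrow M^{\bar{(t)}}$, dual to the inclusion of lattices coming from the natural identification between arcs of $\bar\Sigma$ and the set $I_{\mathrm{int}}$.

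Next, for any edge of $\bT_{I(\Sigma)}$ labeled by $k \in I_{\mathrm{int}}$, the flip on $\Sigma$ descends to a flip on $\bar\Sigma$, and the local elementary matrices $E^{(t)}_{k,\epsilon}$ and $\check E^{(t)}_{k,\epsilon}$ from \eqref{eq:E_check_matrix}--\eqref{eq:E_matrix} satisfy a block-lower-triangular compatibility: the $I_{\mathrm{int}}$-by-$I_{\mathrm{int}}$ block is precisely the corresponding elementary matrix on $\bar\Sigma$, while the remaining blocks only involve the boundary-related coordinates. For $k \in I_{\partial}$ (which will be exactly the non-stabilizing components, since the dynamics of $\pi(\phi)$ on the punctured surface is uniformly sign-stable), the flip acts trivially on $\bar\Sigma$, so $E^{(t)}_{k,\epsilon}$ preserves the $I_{\mathrm{int}}$-coordinates identically. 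Concatenating along $\gamma$ and using that $\bep$ agrees with $\bep_\gamma^\stab$ exactly on $I_{\mathrm{int}}$-indexed steps, the full presentation matrix $E_{\gamma,\bep}$ becomes block-lower-triangular of the form
\[
E_{\gamma,\bep} = \begin{pmatrix} E_{\pi(\gamma), \bep^\stab_{\pi(\gamma)}} & 0 \\ \ast & \ast \end{pmatrix},
\]
where the upper-left block is the presentation matrix of $\pi(\phi)$ on $\bar\Sigma$ for its stable sign, and the lower-right block, as well as the off-diagonal block, depend on the ambiguous choices on $I_{\partial}$ but do not affect the upper-left spectrum.

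By the characterization of pseudo-Anosov mapping classes on punctured surfaces, $\pi(\phi)$ is uniformly sign-stable with cluster stretch factor $\lambda_{\pi(\phi)}$. Hence by \cref{thm:Perron-Frobenius} applied to $\pi(\phi)$ on $\bar\Sigma$, the matrix $E_{\pi(\gamma),\bep^\stab_{\pi(\gamma)}}$ has $\lambda_{\pi(\phi)}$ as a positive eigenvalue realized by a strictly positive eigenvector. Reading off the spectrum of the block-triangular form then gives $\lambda_{\pi(\phi)}$ as an eigenvalue of $E_{\gamma,\bep}$, and lifting the Perron eigenvector trivially (zero on $I_{\partial}$) shows positivity of the corresponding eigenvector coordinates.

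The main obstacle is verifying the block-triangular compatibility rigorously: one must check that under the combinatorial rule for how boundary-adjacent arcs interact with interior arcs in the exchange matrix $B^{(\tri,\ell)}$, the entries $b^{(t)}_{i,k}$ with $i\in I_{\mathrm{int}}$ and $k\in I_{\partial}$ (or vice versa) conspire so that flipping at a boundary-related arc does not propagate into the $I_{\mathrm{int}}$-block. This is really a statement about how Labardini-Fragoso's triangulation-to-quiver construction behaves under the shrinking map $\pi$, and is where the explicit combinatorics of boundary arcs versus puncture-adjacent arcs will have to be inspected case-by-case.
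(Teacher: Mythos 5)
Note first that this paper does not prove the proposition---it is quoted directly from \cite{IK20b}---so there is no internal argument to compare against; I assess the outline on its own terms. The pivotal claim is that $E_{\gamma,\bep}$ becomes block-triangular with respect to the splitting $I = I_{\mathrm{int}}\sqcup I_\partial$, equivalently that the coordinate subspace $V^{(t)} := \{\,w : x^{(t)}_i(w)=0,\ i\in I_\partial\,\}$ is preserved, and this is exactly where the plan breaks. Your concluding paragraph worries about the wrong direction: flips at $k\in I_\partial$ are harmless here, since by \eqref{eq:E_matrix} each flip sends $w_i\mapsto w_i+[\epsilon b^{(t)}_{ik}]_+ w_k$ and $w_k=0$ on $V^{(t)}$, so the flip acts as the identity on $V^{(t)}$ independently of $\epsilon$. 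The dangerous direction is the other one: a flip at an interior $k\in I_{\mathrm{int}}$ moves a boundary coordinate $w_i$ ($i\in I_\partial$) to $w_i+[\epsilon b^{(t)}_{ik}]_+ w_k$, and since boundary-parallel arcs share triangles with interior arcs one generically has $b^{(t)}_{ik}\neq 0$ and $w_k\neq 0$ on $V^{(t)}$. So $V^{(t)}$ is not a linear invariant subspace, the asserted block-lower-triangular form of $E_{\gamma,\bep}$ does not hold, and one cannot simply read off the spectrum. (The \emph{cone} of shear vectors of laminations avoiding the boundary collar is preserved by the PL map $\mu_\gamma$, but that is a piecewise-linear statement and does not linearize to a block decomposition for a single fixed $\bep$.)

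What is actually needed, and what one can run cleanly, is a single eigenvector rather than an invariant block. Pull the unstable measured foliation $\cF^+_{\pi(\phi)}$ on $\bar\Sigma$ back to a measured foliation on $\Sigma$ by blowing each puncture up to a boundary circle, and let $w^+\in\cX_{(t)}(\bR^\trop)$ be its shear-coordinate vector. The sign sequence $\bep_\gamma(w^+)$ vanishes exactly at the non-stabilizing flip steps and equals $\bep^\stab_\gamma$ at the stabilizing ones, so by continuity of the PL map $\mu_\gamma$ every completion $\bep$ of $\bep_\gamma(w^+)$ to a strict sign with the prescribed stabilizing entries gives the same value $E_{\gamma,\bep}\, w^+=\mu_\gamma(w^+)$; and equivariance of shear coordinates under the mapping-class action gives $\mu_\gamma(w^+)=\lambda_{\pi(\phi)}\, w^+$. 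Hence $\lambda_{\pi(\phi)}>0$ is an eigenvalue of $E_{\gamma,\bep}$ for every admissible $\bep$, with eigenvector $w^+$. This is a pointwise statement at the one vector where the sign ambiguity is concentrated, not a structural statement about the whole matrix, which is why it survives where the block-triangular picture fails. Note also that $w^+$ is generally not a nonnegative vector---shear coordinates of a pseudo-Anosov foliation have mixed signs---so the last step of your outline, lifting a Perron eigenvector ``trivially (zero on $I_\partial$),'' does not reflect what the eigenvector actually looks like.
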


By combining with \cref{thm:K_lower_bound}, we have the following:
\begin{cor}
Let $\phi$, $\gamma$ and $\bep$ as \cref{prop:weak_SS_eigen}.
Then, we define the functor $F_{\gamma, \bep}$ as the composition
\begin{align*}
    F_{\gamma, \bep}: \sD \xrightarrow{(F^*_{\gamma, \bep})^{-1}} \sD' \xrightarrow{\sim} \sD.
\end{align*}
Then, we have
\begin{align*}
    h_0(F_{\gamma, \bep}) \geq \lambda_{\pi(\phi)}.
\end{align*}
\end{cor}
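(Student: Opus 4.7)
The plan is to deduce this from the spectral radius lower bound on categorical entropy (\cref{thm:K_lower_bound}) together with the eigenvalue statement \cref{prop:weak_SS_eigen}. The inequality is really an instance of $h_0(F) \geq \log \rho([F])$, with the refinement that even though the stable sign is not unique in the weakly sign-stable setting, any admissible choice $\bep$ of signs on the non-stabilizing components still yields an induced automorphism on $K_0$ whose spectral radius is at least $\lambda_{\pi(\phi)}$ (interpreted as $\log \lambda_{\pi(\phi)}$; the displayed inequality is presumably a typo).

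First, I would identify the action of $F_{\gamma,\bep}$ on the Grothendieck group. Arguing exactly as in the proof of \cref{lem:cat_entropy_Dfd_0}, the composition $(F^*_{\gamma,\bep})^{-1}$ followed by the seed isomorphism $\sD' \xrightarrow{\sim} \sD$ coming from the $\bs$-equivalence produces, upon passing to $K_0(\Dfd)$, a linear automorphism whose presentation matrix in the simple-module basis equals $P \cdot \check{E}_{\gamma,\bep}$, where $P$ is the permutation matrix of the seed isomorphism. Restricting instead to $\per$ yields a presentation matrix $P \cdot E_{\gamma,\bep}$ on $K_0(\per)$ in the projective basis, by the dual part of \cref{lem:EF_formulae}.

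Next, I would invoke \cref{prop:weak_SS_eigen}: for any sign $\bep$ that agrees with the stable sign on the stabilizing components, the matrix $E_{\gamma,\bep}$ has $\lambda_{\pi(\phi)}$ as a positive eigenvalue. Since $E_{\gamma,\bep}$ and $\check{E}_{\gamma,\bep} = (E_{\gamma,\bep}^\tr)^{-1}$ have the same characteristic polynomial up to sign in the surface case (this is the incarnation of \cref{p:spec_same} known to hold for mapping classes, where the $\cA$- and $\cX$-stretch factors coincide; alternatively one simply works on the $\per$ side, where the relevant matrix is $E_{\gamma,\bep}$ itself), the spectral radius of the induced automorphism $[F_{\gamma,\bep}]$ satisfies $\rho([F_{\gamma,\bep}]) \geq \lambda_{\pi(\phi)}$.

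Finally, I would apply \cref{thm:K_lower_bound} (for the $\Dfd$ restriction) or \cref{prop:co-t_mass_growth} (for the $\per$ restriction) to conclude
\begin{align*}
    h_0(F_{\gamma,\bep}) \;\geq\; \log \rho([F_{\gamma,\bep}]) \;\geq\; \log \lambda_{\pi(\phi)}.
\end{align*}
The main subtle point is really the matter of bookkeeping for $\bep$: unlike the sign-stable case treated in \cref{prop:indep_of_choice}, here different admissible $\bep$'s genuinely produce different functors $F_{\gamma,\bep}$, so one cannot invoke the uniqueness lemma \cref{lem:rigid_index}. The point of \cref{prop:weak_SS_eigen} is precisely that the eigenvalue $\lambda_{\pi(\phi)}$ is common to all such $E_{\gamma,\bep}$, which is exactly what is needed to make the spectral radius lower bound uniform. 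Beyond this input the argument is formal, so I do not anticipate further obstacles.
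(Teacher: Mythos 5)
Your proposal is correct and follows the same basic strategy as the paper, namely feeding the eigenvalue content of \cref{prop:weak_SS_eigen} into the ``spectral radius $\leq$ entropy'' inequality. The one worthwhile refinement you supply is the correct bookkeeping of which matrix shows up where: on $\Dfd$ the presentation matrix of $[F_{\gamma,\bep}|_{\Dfd}]$ is (up to the seed permutation) $\check{E}_{\gamma,\bep}$, which is \emph{not} what \cref{prop:weak_SS_eigen} speaks about, so the $\Dfd$-side argument via \cref{thm:K_lower_bound} would in fact require the (anti-)palindromicity of \cref{p:spec_same} — which is only a conjecture in this paper and has not been established for the weakly sign-stable, nonempty-boundary case. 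The $\per$-side route you identify is the clean one: the presentation matrix of $[F_{\gamma,\bep}|_{\per}]$ is the $E_{\gamma,\bep}$ of \cref{prop:weak_SS_eigen} directly, so one gets $\rho([F_{\gamma,\bep}|_{\per}]) \geq \lambda_{\pi(\phi)}$ without any extra assumption, and then \cref{prop:co-t_mass_growth} (the co-t-structure version, applicable to $\per(\Gamma)$ and its canonical co-heart) gives $\log\lambda_{\pi(\phi)} \leq \log\rho([F_{\gamma,\bep}|_{\per}]) \leq h_0(F_{\gamma,\bep}|_{\per})$. You are also right that the displayed inequality is missing the logarithm. I would recommend dropping the speculative ``incarnation of \cref{p:spec_same} known to hold for mapping classes'' line and keeping only the $\per$-side argument, which is airtight.
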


\subsection{Comments on the DHKK pseudo-Anosovness}
In the paper \cite{DHKK}, which initiates the categorical dynamical systems, they already proposed the definition of the pseudo-Anosov autoequivalence, it is called \emph{DHKK pseudo-Anosov} autoequivalence in \cite{FFHKL}.
The DHKK pseudo-Anosov autoequivalences are pseudo-Anosov in the sense of \cite{FFHKL} (\cite[Theorem 2.15]{FFHKL}).
In this subsection, we recall that the geometric interpretation of the space of stability conditions of the finite-dimensional derived category $\Dfd(\Gamma)$ of the Ginzburg dg algebra associated with an ideal triangulation of a punctured surface, and see that the obstruction to assert that the autoequivalence $F_\phi|_{\Dfd}$ is DHKK pseudo-Anosov.

First, we recall the definition of the DHKK pseudo-Anosovness.
\begin{defi}[DHKK pseudo-Anosov, {\cite[Definition 4.1]{DHKK}, \cite[Definition 2.14]{FFHKL}}]
An autoequivalence $F \in \Aut(\sD)$ is \emph{DHKK pseudo-Anosov} if there exists a stability condition $\sigma \in \Stab(\sD)$ and $\lambda >1$ such that
\begin{align}\label{eq:DHKK_pA}
    F \cdot \sigma = \sigma \cdot \begin{pmatrix}\lambda^{-1} & 0 \\ 0 & \lambda\end{pmatrix}.
\end{align}
Here, the matrix $\begin{pmatrix}\lambda^{-1} & 0 \\ 0 & \lambda\end{pmatrix}$ denotes some lift to the universal covering of $GL^+(2, \bR)$.
\end{defi}

One can find some examples of the DHKK pseudo-Anosov autoequivalences.
For instance, on the derived category of the path algebra of a $\ell\geq 3$-Kronecker quiver \cite[Section 4]{DHKK}, on a derived category of an elliptic curve \cite[Proposition 4.14]{Kik19} and on the topological Fukaya category  of a marked surface \cite[Section 4]{FFHKL} (\emph{cf.}, \cite{HKK}).
We note that our setting is quite similar to the setting of \cite[Section 4]{FFHKL} but they are completely different (\emph{cf.}, \cite{IQ}).

Let $\phi$ be a pseudo-Anosov mapping class of a punctured surface $\Sigma$.
Then, its pseudo-Anosov representatives determine the transverse measured foliations $\cF^+_\phi$ and $\cF^-_\phi$ such that they have the same $k(\geq 3)$-pronged singularities.
On the other hand, a meromorphic quadratic differential $q$ on a Riemann surface $X$ with simple poles at the punctures of $X$ gives such the pair of measured foliations as horizontal and vertical foliations $\cF^h_q$ and $\cF^v_q$ of $q$.
Here, $X$ is a point of the \Teich\ space $\cT(\Sigma)$ of $\Sigma$.
Namely, it is homeomorphic to the surface $\Sigma$.
Let $\cQ(\Sigma)$ denotes the fiber bundle over the \Teich\ space with fibers the space of such the meromorphic quadratic differential on $X \in \cT(\Sigma)$.

\begin{thm}[\cite{HM,Gar}]\label{thm:HM}
The mapping
\[\cF: \cQ(\Sigma) \to \MF(\Sigma) \times \MF(\Sigma)\ ;\ (X, q) \mapsto (\cF^h_q, \cF^v_q)\]
is $MC(\Sigma)$-equivariant and injective.
Here $\MF(\Sigma)$ denotes the space of measured foliations on $\Sigma$.
Moreover, the image of the map $\cF$ is
\[\{ (\cF_1, \cF_2) \mid \mbox{$\mathrm{Int}(\cF_1,\gamma) \neq 0$ or $\mathrm{Int}(\cF_2, \gamma) \neq 0$ for all simple closed curve $\gamma$ on $\Sigma$} \}.\]
\end{thm}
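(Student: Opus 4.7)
The plan is to recognize this as the Hubbard--Masur theorem (plus its extension to meromorphic quadratic differentials with simple poles due to Gardiner), and to proceed in three stages: equivariance, injectivity, and the characterization of the image. First I would dispatch equivariance, which is essentially formal: a mapping class $\phi$ acts on $(X,q) \in \cQ(\Sigma)$ by pullback of the complex structure and the quadratic differential, and the horizontal/vertical foliations of $\phi^* q$ are precisely the pullbacks of those of $q$, since the condition $\mathrm{Re}\sqrt{q}=0$ (resp.\ $\mathrm{Im}\sqrt{q}=0$) is preserved under biholomorphisms.

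For injectivity, the key idea is that a quadratic differential $q$ on $X$ equips $X \setminus \mathrm{Zeros}(q)$ with a singular flat structure: in a local natural coordinate $z$ with $q = dz^2$, the horizontal foliation is given by $\mathrm{Im}(z) = \mathrm{const}$ with transverse measure $|d\,\mathrm{Im}(z)|$, and similarly for the vertical. Thus the pair $(\cF^h_q, \cF^v_q)$ records the flat metric $|dz|^2 = |d\mathrm{Re}(z)|^2 + |d\mathrm{Im}(z)|^2$ together with its two perpendicular foliations. From the flat metric one recovers the conformal structure (hence the point $X \in \cT(\Sigma)$), and the natural coordinates then reconstruct $q$ up to sign. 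The plan is to show that the pair of foliations determines the flat structure by triangulating $\Sigma$ by saddle connections and using the transverse measures of $\cF^h_q$ and $\cF^v_q$ to read off the horizontal and vertical components of each edge; these $(x_e, y_e)$ coordinates rigidify the flat structure and hence the whole picture.

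For the image characterization, the substantive direction is surjectivity onto the set of \emph{binding} pairs (pairs with positive total intersection with every simple closed curve). Given $(\cF_1,\cF_2)$ satisfying the condition, one constructs a singular flat surface by gluing Euclidean rectangles whose horizontal (resp.\ vertical) sides carry the measures of $\cF_2$ (resp.\ $\cF_1$) along transverse arcs; the binding condition is exactly what guarantees that this produces a compact surface homeomorphic to $\Sigma$ rather than a surface with cylinder ends, and that the resulting cone angles are of the form $k\pi$ with $k\geq 1$ (the simple poles at punctures correspond to $k=1$, using Gardiner's extension). The resulting flat structure gives a Riemann surface $X$ together with a quadratic differential $q$ whose horizontal and vertical foliations are $\cF_1$ and $\cF_2$. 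Conversely, for any $(X,q) \in \cQ(\Sigma)$ and any simple closed curve $\gamma$, $\mathrm{Int}(\cF^h_q,\gamma) + \mathrm{Int}(\cF^v_q,\gamma)$ bounds the $q$-length of the geodesic representative of $\gamma$ from below, which is positive; this shows the image is contained in the described set.

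The main obstacle is the careful combinatorial construction in the surjectivity step: one must control the gluing pattern of rectangles to produce the correct topological surface, and handle the punctures (simple poles) where the cone angle is $\pi$. This is precisely where Gardiner's extension beyond the original Hubbard--Masur argument (which treated holomorphic differentials on closed surfaces) enters, and where I would simply invoke \cite{HM,Gar} rather than reproduce the construction.
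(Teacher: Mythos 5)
The paper does not prove this statement: it is imported verbatim as a known theorem from \cite{HM} (Hubbard--Masur for holomorphic quadratic differentials on closed surfaces) and \cite{Gar} (Gardiner's extension to the punctured / simple-pole case), and no proof appears in the text, so there is no in-paper argument against which to compare your sketch. That said, your outline is the standard route and is consistent with how the paper uses the result: equivariance is formal, injectivity is the ``two transverse measured foliations reconstruct the singular flat metric, hence the conformal structure and the differential'' argument, and you correctly single out surjectivity onto binding pairs as the substantive analytic content of \cite{HM,Gar}, which you (like the paper) leave to the references rather than reprove. One small slip worth fixing: for the horizontal foliation of $q$ the defining condition along a leaf is $\mathrm{Im}\bigl(\sqrt{q}(\dot\gamma)\bigr)=0$ and for the vertical $\mathrm{Re}\bigl(\sqrt{q}(\dot\gamma)\bigr)=0$; you have these swapped in the equivariance paragraph, although your later natural-coordinate description ($\mathrm{Im}(z)=\mathrm{const}$ for horizontal leaves) is correct. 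A second point, harmless here but worth keeping in mind, is that injectivity must be phrased at the level of the quotient space $\MF(\Sigma)$ (isotopy and Whitehead equivalence classes), so one should observe that a binding pair forces the underlying geometric foliations to be determined up to isotopy by their classes before ``reading off'' the flat metric.
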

The arationarity of the measured foliations $\cF^\pm_\phi$ of a pseudo-Anosov mapping class $\phi$ implies that the pair $(\cF^+_\phi, \cF^-_\phi)$ contained in the image of the map $\cF$.
Thus, there is a point $(X_\phi, q_\phi) \in \cQ(\Sigma)$ corresponding to $(\cF^+_\phi, \cF^-_\phi)$ by the map $\cF$.
If all the zeroes of quadratic differential $q_\phi$ on the Riemann surface $X_\phi$ are simple, then we will say that $\phi$ is \emph{simple}.

\begin{rmk}
In the space of measured foliations $\MF(\Sigma)$ the degree of pronged singularities is ill-defined since it is not preserved by the Whitehead moves.
However, the degree of pronged singularities of horizontal foliation of the holomorphic quadratic differential $q$ of a Riemann surface $X$ corresponds to the degree of the zeroes of $q$, so it is meaningful.
In this sense, the pseudo-Anosov mapping $\phi$ is simple if and only if the foliation $\cF^+_\phi$ has only $1$- or $3$-pronged singularity.
\end{rmk}

Next, we recall the geometric interpretation of the space of stability conditions of the finite-dimensional derived category $\sD$ of the Ginzburg dg algebra associated with the seed attachment arising from the ideal triangulations of a punctured surface $\Sigma$.

Let $\cQ_2(\Sigma)$ denotes the fiber bundle over the \Teich\ space $\cT(\Sigma)$ with the fiber at $X \in \cT(\Sigma)$ is the space of meromorphic quadratic differentials on $X$ with simple or double poles at the punctures of $X$.
Moreover, we consider the subspace $\cQ^\circ_2(\Sigma) \subset \cQ_2(\Sigma)$ which is consisting of the quadratic differentials with simple zeroes.
One can verify that the residue of a double pole of a meromorphic quadratic differential is well-defined up to a sign.
By choosing the signs of residues at every double poles, we obtain the $2^P$-folded branched cover
\begin{align*}
    \widehat{\cQ}_2(\Sigma) \to \cQ_2(\Sigma)
\end{align*}
with the ramification locus is given by $\cQ(\Sigma)$.
We write $\widehat{\cQ}^\circ_2(\Sigma) \subset \widehat{\cQ}_2(\Sigma)$ for the subspace obtained by the pullback along the covering, and $\cQ^\circ(\Sigma) := \cQ(\Sigma) \cap \cQ_2^\circ(\Sigma)$.

\begin{thm}[{\cite[Theorem 10.3]{All21}}]
Let $\Sigma$ be a punctured surface which is neither a once punctured surface nor a sphere with $\geq 5$ punctures, and $\sD$ denotes the finite-dimensional derived category of the Ginzburg dg algebra associated with the seed pattern of $\Sigma$.
Then, there is an isomorphism as the complex manifolds
\begin{align*}
    \widehat{\cQ}^\circ_2(\Sigma) \cong \Stab^\dagger(\sD)/ST^\dagger(\sD).
\end{align*}
Here, $ST^\dagger(\sD)$ is the group generated by the spherical twists along the simple objects of the heart of a stability condition $\sigma \in \Stab^\dagger(\sD)$ modulo those which are preserving all the heart of the stability conditions in the component $\Stab^\dagger(\sD)$.
Moreover, this isomorphism is equivariant under the actions $MC(\Sigma) \ltimes (\bZ/2)^P \cong Aut^\dagger(\sD)/ST^\dagger(\sD)$ \footnote{
The quotient group $Aut^\dagger(\sD)/ST^\dagger(\sD)$ is naturally isomorphic to the cluster modular group of the seed pattern $\bs_\Sigma$.
}, where $Aut^\dagger(\sD)$ is the group of automorphisms of $\sD$ which preserves the component $\Stab^\dagger(\sD)$ modulo those which are like as in $ST^\dagger(\sD)$.
\end{thm}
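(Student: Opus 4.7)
The plan is to follow the Bridgeland--Smith framework, extended by Allegretti to handle meromorphic quadratic differentials with simple and double poles, by building an $Aut^\dagger(\sD)$-equivariant holomorphic period map $\Phi: \Stab^\dagger(\sD) \to \widehat{\cQ}^\circ_2(\Sigma)$ whose fibers are exactly $ST^\dagger(\sD)$-orbits. Then $\Phi$ will descend to the desired isomorphism on quotients, and equivariance of $\Phi$ will give the claimed intertwining of the group actions $MC(\Sigma) \ltimes (\bZ/2)^P \cong Aut^\dagger(\sD)/ST^\dagger(\sD)$.

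To construct $\Phi$, note that for each vertex $t \in \bT_I$ corresponding to an ideal triangulation $\tri$ of $\Sigma$, the simple objects of the standard heart $\sA^{(t)}$ are in bijection with the arcs of $\tri$. Given $\sigma = (\sA^{(t)}, Z) \in \Stab^\dagger(\sD)$, the central charges $Z(S_i^{(t)}) \in \rH$ are to be interpreted as periods $\int_{\gamma_i}\sqrt{q}$ along cycles on the spectral double cover of a meromorphic quadratic differential $q$ on a Riemann surface homeomorphic to $\Sigma$. The flat metric $|q|$ recovers the conformal structure, and the residues at double poles are determined up to sign by the periods; the additional data carried by $\sigma$ pins down a sign, producing a canonical lift to $\widehat{\cQ}_2(\Sigma)$. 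Stability conditions with non-standard hearts are handled by extending via spherical twists, which is consistent because any heart in $\Stab^\dagger(\sD)$ is connected to a standard one by tiltings corresponding to flips.

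Holomorphicity of $\Phi$ is immediate: central charges give local holomorphic coordinates on $\Stab^\dagger(\sD)$ by Bridgeland's deformation theorem, periods give local holomorphic coordinates on $\widehat{\cQ}^\circ_2(\Sigma)$, and $\Phi$ matches the two. Invariance under $ST^\dagger(\sD)$ follows because a spherical twist at a simple $S_i^{(t)}$ implements a flip of $\tri$ at the arc represented by $S_i^{(t)}$ and reshuffles the homology basis $(\gamma_i)$ without changing $(X,q)$. For surjectivity, I would apply the inverse Gaiotto--Moore--Neitzke WKB construction: given $(X,q) \in \widehat{\cQ}^\circ_2(\Sigma)$, its horizontal trajectory structure yields an ideal triangulation by saddle connections whose periods define a central charge with values in $\rH$ on the corresponding standard heart. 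Equivariance under $MC(\Sigma) \ltimes (\bZ/2)^P$ is built into the construction: the mapping class group acts by change of marking on both sides, and the factor $(\bZ/2)^P$ matches the deck transformations of the cover $\widehat{\cQ}_2(\Sigma) \to \cQ_2(\Sigma)$ with the tag-changing autoequivalences of the category.

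The hard part will be twofold. First, one must show the image of $\Phi$ lands in $\widehat{\cQ}^\circ_2(\Sigma)$ rather than the larger $\widehat{\cQ}_2(\Sigma)$: stability conditions producing higher-order zeroes would correspond to walls where two simple objects share a phase, and one must verify that such walls are not met in the interior of $\Stab^\dagger(\sD)$. Second, proving that $\Phi$ is a local homeomorphism across these walls requires carefully matching spherical-twist tilts on the category side with Whitehead moves on saddle connections on the differential side, and this is the main technical input of Bridgeland--Smith that Allegretti adapts to the meromorphic setting. Finally, the exclusion of once-punctured surfaces and spheres with $\geq 5$ punctures reflects genuinely degenerate behavior of the associated quivers with potential in those cases (failures of non-degeneracy, or the presence of extra autoequivalences not accounted for by $ST^\dagger$), so sidestepping them is essential for the clean statement above.
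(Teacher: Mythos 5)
The paper does not prove this theorem; it cites it verbatim from Allegretti's work as an external input, so there is no in-text proof to compare against. Your sketch captures the correct framework in its broad outline (the Bridgeland--Smith period/WKB correspondence, extended by Allegretti to allow simple poles), but several steps are imprecise, and the parts you flag as ``hard'' are in fact the entire technical content of the theorem rather than refinements. The most significant confusions: (i) you conflate flips with spherical twists. A flip of the ideal triangulation at the arc indexed by $i$ is implemented by a simple tilt of the heart $\sA^{(t)}$ at the torsion pair generated by $S_i^{(t)}$, i.e.\ by a Keller--Yang equivalence, not by the spherical twist $\mathrm{Tw}_{S_i}$. The spherical twist group $ST^\dagger(\sD)$ acts as deck transformations over the period map for a different reason: a spherical twist preserves the local central-charge chart (since it acts trivially on $K_0(\sD)$ up to the relevant identification) but generally moves the heart, so it moves $\sigma$ within the fiber of the period map; showing the fibers are \emph{exactly} orbits of $ST^\dagger(\sD)$ requires a monodromy argument over the saddle-connection walls, which your plan elides. (ii) ``Whitehead moves on saddle connections'' mixes up the measured-foliation picture with the quadratic-differential picture. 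What happens on the differential side at walls is the appearance and disappearance of saddle connections (and, at the boundary of $\widehat{\cQ}_2^\circ$, collisions of zeros), and the matching of these with simple tilts and the treatment of limits near simple poles is precisely Allegretti's contribution beyond Bridgeland--Smith. (iii) The definition of $ST^\dagger(\sD)$ involves a quotient (``modulo those which are preserving all the heart of the stability conditions''), and you do not address why that quotient is needed; it is exactly because some spherical twists act trivially on $\Stab^\dagger(\sD)$ and must be killed for the action to be faithful and for the quotient to be a manifold.

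On the excluded cases: your explanation is speculation and not on target. The once-punctured surfaces are excluded for the same reason as in Bridgeland--Smith: in that case tagged triangulations are constrained (all tags at the single puncture must agree) and the cluster/exchange structure does not match the space of signed differentials, so the period map cannot be set up as stated. The exclusion of spheres with $\geq 5$ punctures is a separate issue having to do with the construction and nondegeneracy of the Labardini-Fragoso potential (and the resulting finite-dimensionality questions), not with ``extra autoequivalences not accounted for by $ST^\dagger$''. Given that this is a cited theorem of substantial length in the source reference, your outline is a legitimate summary of the strategy, but it should not be mistaken for a proof, and the specific points above would need to be corrected before it could be expanded into one.
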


By \cite{Vee,BS}, there is a locally-defined map $\pi: \widehat{\cQ}^\circ_2(\Sigma) \to \Hom_\bZ(K_0(\sD), \bC)$ defined by the \emph{period map}, which is local isomorphism.
Roughly speaking, it is equivalent to measuring horizontal and vertical foliations of a quadratic differential by arcs which form a dual graph of an ideal triangulation of $\Sigma$
\footnote{For a quadratic differential which has simple poles, we have to deal with it more carefully \cite[Section 6]{BS}}.
Moreover, there is a locally-defined map $J: \widehat{\cQ}^\circ_2(\Sigma) \to \Stab^\dagger(\sD)/Aut_0^\dagger(\sD)$ it commute the following diagmam:
\[\begin{tikzcd}[column sep=small]
\widehat{\cQ}^\circ_2(\Sigma) \ar[rr, "J"] \ar[rd, "\pi"'] && \Stab^\dagger(\sD)/Aut_0^\dagger(\sD) \ar[ld]\\
& \Hom_\bZ(K_0(\sD), \bC).
\end{tikzcd}\]
Here, $Aut^\dagger_0(\sD)$ is a subgroup of $Aut^\dagger(\sD)$ consisting of the automorphisms which act by the identity on the Grothendieck group $K_0(\sD)$.

If a pseudo-Anosov mapping class $\phi$ on a punctured surface $\Sigma$ is simple, the stability condition $J(q_\phi)$ behaves like the stability condition of the pseudo-Anosovness of the functor $F_\phi|_{\Dfd}$.
Therefore, the DHKK pseudo-Anosovness of $F_\phi|_{\Dfd}$ is equivalent to the lifting problem of $J(q_\phi)$ to $\Stab^\dagger(\sD)$.
\begin{prob}
Are there a lift $\sigma_\phi$ of $J(q_\phi) \in \Stab^\dagger(\sD)/Aut_0^\dagger(\sD)$ to $\Stab^\dagger(\sD)$ satisfying the condition of the DHKK pseudo-Anosovness \eqref{eq:DHKK_pA} ?
\end{prob}



\end{document}